%% file: Observer-Based_Event-Triggered_Control_of_Infinite-Dimensional_Linear_Systems.tex
\documentclass[11pt]{article}

\usepackage[a4paper,margin=2.3cm]{geometry}

\usepackage[utf8]{inputenc}
\usepackage[T1]{fontenc}
\usepackage[english]{babel}

\usepackage{amsmath,amssymb,amsthm,mathtools}
\usepackage{enumitem}
\usepackage{booktabs}
\usepackage{array}
\usepackage{xcolor}

\usepackage[
appendix=append,
bibliography=common
]{apxproof}

\usepackage[
colorlinks=true,
linkcolor=blue!50!black,
citecolor=green!40!black,
urlcolor=blue!50!black
]{hyperref}

\usepackage[
capitalize,
nameinlink
]{cleveref}

\usepackage{comment}

\theoremstyle{plain}

\newtheorem{theorem}{Theorem}[section]

\newtheoremrep{lemma}[theorem]{Lemma}
\newtheoremrep{proposition}[theorem]{Proposition}
\newtheoremrep{corollary}[theorem]{Corollary}

\theoremstyle{definition}

\newtheorem{definition}[theorem]{Definition}
\newtheorem{example}[theorem]{Example}

\theoremstyle{remark}

\newtheorem{remark}[theorem]{Remark}

\theoremstyle{plain}

\newtheorem{assumption}{Assumption}

\crefname{assumption}{Assumption}{Assumptions}

\newcommand{\R}{\mathbb{R}}
\newcommand{\N}{\mathbb{N}}
\newcommand{\Cbb}{\mathbb{C}}
\newcommand{\Hcal}{\mathcal{H}}
\newcommand{\Ucal}{\mathcal{U}}
\newcommand{\Ycal}{\mathcal{Y}}
\newcommand{\Acal}{\mathcal{A}}
\newcommand{\Lcal}{\mathcal{L}}
\newcommand{\Dcal}{\mathcal{D}}

\newcommand{\Ecal}{\mathcal{E}}
\newcommand{\HQ}{\mathcal{H}_Q}
\newcommand{\norm}[1]{\left\|#1\right\|}
\newcommand{\normH}[1]{\left\|#1\right\|_{\Hcal}}
\newcommand{\normQ}[1]{\left\|#1\right\|_{\HQ}}

\newcommand{\zhat}{\hat z}
\newcommand{\AK}{\Acal_K}
\newcommand{\AL}{\Acal_L}
\newcommand{\Hu}{\Hcal_u}
\newcommand{\Hs}{\Hcal_s}
\newcommand{\Pu}{P_u}
\newcommand{\Ps}{P_s}
\newcommand{\Au}{\Acal_u}
\newcommand{\As}{\Acal_s}

\DeclareMathOperator{\spanop}{span}

\newif\ifnumerics
\numericstrue

\NewEnviron{numerics}{\ifnumerics
	\par\color{blue}\noindent\itshape \textbf{Numerics :} 
	\BODY
	\par
	\fi
}

\usepackage{authblk}

\title{Observer-Based Event-Triggered Control of Infinite-Dimensional Linear Systems}

\author[1]{Nail Baloul}
\author[2]{Lucie Baudouin}
\author[3]{Sylvain Ervedoza}
\author[4]{Thibault Liard}
\author[1]{Pierre Lissy}

\affil[1]{CERMICS, CNRS, ENPC, Institut Polytechnique de Paris, Marne-la-Vallée, France.
}

\affil[2]{LAAS-CNRS, Université de Toulouse, CNRS,
	Toulouse, France
}

\affil[3]{Institut de Mathématiques de Bordeaux, Université de Bordeaux, CNRS,
	Bordeaux, France
}

\affil[4]{XLIM, Université de Limoges, CNRS, Limoges, France
}

\date{\today}

\begin{document}
	
	\maketitle
	
	\begingroup
	\renewcommand{\thefootnote}{\fnsymbol{footnote}}
	\setcounter{footnote}{0}
	
	\footnotetext{Emails:
		\texttt{nail.baloul@enpc.fr},
		\texttt{baudouin@laas.fr},
		\texttt{sylvain.ervedoza@math.u-bordeaux.fr},
		\texttt{thibault.liard@unilim.fr},
		\texttt{pierre.lissy@enpc.fr}.
	}
	
	\footnotetext{
		
		N. Baloul and P. Lissy are supported by ANR-22-CPJ1-0138-01.
		S. Ervedoza is supported by TRECOS ANR 20-CE40-0009, NumOpTes ANR-22-CE46-0005 and CHAT ANR-24-CE40-5470.
		T. Liard is supported by ANR-24-CE46-7295.
		P. Lissy is supported by ANR-26-CE40-0732-01.
	}
	
	\endgroup
	
	\begin{abstract}
		
		We investigate observer-based event-triggered stabilization 
		of infinite-dimensional linear systems, such as those governed by partial differential equations.  A Luenberger observer and a feedback gain are assumed to stabilize the closed loop in continuous time, and we study what happens when the control input is only updated at instants generated by a dynamic event-triggering mechanism. Unlike approaches that rely on state-dependent triggering rules, our condition only involves the observer state and the measured output, which makes it implementable when the full state is not measured. It compares the deviation of a monitored signal of the observer since the last update with a threshold made of two parts. The first one is a relative part, proportional to the current size of this signal, and the second one is a positive internal variable, which does not depend on the observer state. In particular, the case where the threshold reduces to the internal variable alone is covered.
		
		Under suitable assumptions, we establish the well-posedness of the closed-loop system and the absence of Zeno behavior for general choices of these two parts, and we prove its exponential stability under a small-gain condition on the relative part. We further analyse the existence of a dwell-time, namely a positive lower bound on the time elapsed between two consecutive triggering instants. When the sampled dynamics is bounded, as in finite-dimensional systems or systems with a finite-dimensional unstable part, we obtain a dwell-time independent of the initial data, together with exponential stability. For unbounded generators,
		we exhibit a system satisfying all our assumptions for which no triggering rule can achieve both a dwell-time independent of the initial data and bounded trajectories.
		
		No structural assumption such as skew-adjointness is imposed on the operator generating the system dynamics, which broadens the applicability of the approach to systems with quasi-dissipative or non-conservative behaviour.
		
	\end{abstract}
	
	\tableofcontents
	
	\section{Introduction}\label{sec:intro}
	
	\subsection{Motivation and context}
	
	Event-triggered control has emerged over the past two decades as a trend in feedback stabilization, particularly in control systems where communication constraints are prevalent. This paradigm is meant to reduce communication and computation (and hence energy consumption) while preserving closed-loop stability. Instead of updating the control continuously (or periodically), the control is updated on demand, when a measured quantity crosses a threshold \cite{Tabuada2007,HeemelsJohanssonTabuada2012}. Three properties must then be guaranteed. The closed loop must be well defined (the triggering instants must be generated consistently with the trajectory), the triggering instants must not accumulate in finite time (absence of Zeno behaviour), and the closed loop must be stable. A fourth, stronger, property, often desirable in practice, is the existence of a dwell-time, that is, a positive lower bound on all inter-event times, ideally independent of the initial condition, which bounds the transmission rate.
	
	In finite dimension, these properties follow from a simple mechanism. Between two events, the control is frozen, and the deviation of the state from its last sampled value grows at most linearly in the elapsed time $h$, at a rate proportional to the size of the sampled state, since $\norm{e^{hA}-I}=O(h)$ for a matrix $A$ (see \cref{lem:bounded-incr}). A threshold proportional to the size of the state therefore cannot be reached before a time of order $\norm{A}^{-1}$, which gives a dwell-time. A small proportionality constant keeps the sampling error small compared with the state, which gives stability.
	
	Extending this approach to infinite-dimensional systems, such as those described by partial differential equations (PDE), is delicate because of the presence of unbounded operators, semigroup dynamics and the coexistence of stable and unstable spectral modes.
	Indeed, the generator $\Acal$ of the free dynamics is unbounded, and the previous mechanism no longer applies. The operator $e^{h\Acal}-I$ does not tend to $0$ in operator norm as $h\to0$. Hence, however small $h$ is, some states may change by an amount comparable to their own size (see \cref{lem:nonunif}). High-frequency components evolve on arbitrarily short time scales, and the inter-event times may depend on these frequencies.
	
	Moreover, in many situations, the full state is not measured and the control must be computed from an observer. The triggering rule can then only use the observer state and the measured output. This creates a second difficulty. The observer is driven not only by the sampled control, but also by the output estimation error injected through the observer gain. 
	
	Our objective is not to design a stabilizing feedback. We start from a continuous-time observer-based feedback which is known to stabilize the system exponentially, and we study the effect of an event-triggered implementation of this feedback on well-posedness, Zeno behavior, exponential stability and dwell-time, in an abstract framework covering a large class of PDE.
	
	\subsection{The control system and the standing assumptions}
	
	Let $\Hcal$, $\Ucal$ and $\Ycal$ be real or complex Hilbert spaces, respectively the state, input and output spaces.\\ 
	We start from the free system $\dot z=\Acal z$, under the following assumption.
	\begin{assumption}\label{A:gen}
		$\Acal\colon\Dcal(\Acal)\subset\Hcal\to\Hcal$ is the infinitesimal generator of a strongly continuous semigroup (for short $C_0$-semigroup) $e^{t\Acal}\coloneqq(e^{t\Acal})_{t\ge0}$ on $\Hcal$.
	\end{assumption}
	\Cref{A:gen} means that the free system is well posed in $\Hcal$. The generator $\Acal$ may be unbounded and even unstable. We will use the following classical bound on the growth of the free semigroup (see \cite[Proposition I.5.5]{EngelNagel2000}). There exist $C_\Acal\ge1$ and $\omega_\Acal\in\R$ such that
	\begin{equation}\label{eq:growth}
		\norm{e^{t\Acal}}_{\Lcal(\Hcal)}\le C_\Acal e^{\omega_\Acal t},\qquad t\ge0.
	\end{equation}
	We write $\omega_\Acal^+\coloneqq\max(\omega_\Acal,0)$, so that $\norm{e^{s\Acal}}\le C_\Acal e^{\omega_\Acal^+h}$ for $0\le s\le h$, and
	\begin{equation}\label{eq:MA}
		M_\Acal\coloneqq\sup_{0\le s\le1}\norm{e^{s\Acal}}_{\Lcal(\Hcal)}\le C_\Acal e^{\omega_\Acal^+}<\infty .
	\end{equation}
	
	The control acts through an operator $B$ and the measured output is $y=Cz$. To stabilize the system from this output, we use a Luenberger observer \cite{Luenberger1964,Luenberger1971}, and we apply a state feedback to the estimated state. In continuous time, this gives the observer-based controller
	\begin{equation}\label{eq:continuous}
		\begin{cases}
			\dot z(t)=\Acal z(t)+Bu(t), & t\ge0,\\
			\dot{\zhat}(t)=\Acal\zhat(t)+Bu(t)+L\big(C\zhat(t)-y(t)\big), & t\ge0,\\
			u(t)=K\zhat(t), & t\ge0,\\
			y(t)=Cz(t), & t\ge0.
		\end{cases}
	\end{equation}
	Here $B$ is the control operator, associated with the available actuators, $C$ is the observation operator, associated with the available measurements, $K$ is the control gain and $L$ is the injection gain, both to be chosen. We will make the following assumptions.
	\begin{assumption}\label{A:BK}
		$B\in\Lcal(\Ucal,\Hcal)$ and $K\in\Lcal(\Hcal,\Ucal)$, i.e. $B,K$ are bounded linear operators.
	\end{assumption}
	\begin{assumption}\label{A:Kstab}
		The operator $\AK\coloneqq\Acal+BK$, with domain $\Dcal(\Acal)$, generates an exponentially stable $C_0$-semigroup $T_K(t)\coloneqq e^{t\AK}$, i.e. there exist $M_K\ge1$ and $\alpha_K>0$ such that
		\[
		\norm{T_K(t)}_{\Lcal(\Hcal)}\le M_Ke^{-\alpha_Kt}\qquad \text{for every }t\ge0.
		\]
	\end{assumption}
	\begin{assumption}\label{A:CL}
		$C\in\Lcal(\Hcal,\Ycal)$ and $L\in\Lcal(\Ycal,\Hcal)$.
	\end{assumption}
	\begin{assumption}\label{A:Lstab}
		The operator $\AL\coloneqq\Acal+LC$, with domain $\Dcal(\Acal)$, generates an exponentially stable $C_0$-semigroup $T_L(t)\coloneqq e^{t\AL}$, i.e. there exist $M_L\ge1$ and $\alpha_L>0$ such that
		\[
		\norm{T_L(t)}_{\Lcal(\Hcal)}\le M_Le^{-\alpha_Lt}\qquad \text{for every }t\ge0.
		\]
	\end{assumption}
	Under \cref{A:gen,A:BK,A:CL}, the bounded perturbation theorem \cite[Theorem III.1.3]{EngelNagel2000} already guarantees that $\AK$ and $\AL$ generate $C_0$-semigroups with domain $\Dcal(\Acal)$. \Cref{A:Kstab,A:Lstab} only concern their decay. They express that $K$ is a stabilizing feedback for the pair $(\Acal,B)$ and that $L$ is an output injection making the observer converge. No structural property of $\Acal$ itself, such as skew-adjointness, dissipativity, analyticity or compactness of the resolvent, is assumed.
	
	\paragraph{The separation principle.} Let $e\coloneqq\zhat-z$ be the estimation error. Since $C\zhat-y=Ce$, \eqref{eq:continuous} is equivalent to
	\[
	\dot{\zhat}=\AK\zhat+LCe,\qquad \dot e=\AL e .
	\]
	Under \cref{A:gen,A:BK,A:Kstab,A:CL,A:Lstab}, the error satisfies $\normH{e(t)}\le M_Le^{-\alpha_Lt}\normH{e(0)}$, and the variation of constants formula for $\zhat$ gives, for every $\lambda\in[0,\alpha_K)$ with $\lambda\le\alpha_L$ (see \cref{lem:conv-exp}),
	\[
	e^{\lambda t}\normH{\zhat(t)}\le M_K\normH{\zhat(0)}+\frac{M_KM_L\norm{LC}}{\alpha_K-\lambda}\normH{e(0)},\qquad t\ge0.
	\]
	Hence the continuous-time loop \eqref{eq:continuous} is exponentially stable. This is the separation principle, here in a simple form since all the operators involved are bounded, see \cite{CurtainZwart1995} for observer-based controllers in infinite dimension.
	The question addressed in our study is the effect, on this stable loop, of an event-triggered implementation of the control.
	
	\paragraph{The event-triggered closed loop.} The control is now only updated at instants $0\eqcolon t_0<t_1<\cdots$, generated by the triggering mechanism of \cref{sec:intro-rule}, and it is held constant between two consecutive instants. This is called zero-order hold, see for instance \cite{HeemelsJohanssonTabuada2012}. The closed loop reads
	\begin{equation}\label{eq:system}
		\begin{cases}
			\dot z(t)=\Acal z(t)+Bu(t), & t\ge0,\\
			u(t)=K\zhat(t_k), & t\in[t_k,t_{k+1}),\\
			\dot{\zhat}(t)=\Acal\zhat(t)+Bu(t)+L\big(C\zhat(t)-y(t)\big), & t\ge0,\\
			y(t)=Cz(t), & t\ge0,\\
			z(0)=z_0,\quad \zhat(0)=\zhat_0, &
		\end{cases}
	\end{equation}
	where $z_0,\zhat_0\in\Hcal$ are the initial conditions of the state of the system and of its observer. The observer correction term $L(C\zhat-y)$ is updated continuously from the real-time measurement $y$, and only the control $K\zhat$ is sampled. Sampling the measurement as well, possibly at different instants, is not considered here. The estimation error would then depend on the triggering instants, and the decoupling described below, which is used throughout the paper, would be lost.
	
	\paragraph{Observer--error formulation.} With $e=\zhat-z$ and $e_0\coloneqq\zhat_0-z_0$, \eqref{eq:system} is equivalent to
	\begin{equation}\label{eq:obs-err}
		\begin{cases}
			\dot{\zhat}(t)=\Acal\zhat(t)+BK\zhat(t_k)+LCe(t), & t\in[t_k,t_{k+1}),\\
			\dot e(t)=\AL e(t), & t\ge0,\\
			\zhat(0)=\zhat_0,\quad e(0)=e_0. &
		\end{cases}
	\end{equation}
	The control terms cancel in the error equation, so that $e(t)=T_L(t)e_0$ does not depend on the triggering instants. In particular, the estimation error evolves continuously in time. This is the reason why it is more convenient to work with the pair $(\zhat,e)$ rather than $(z,\zhat)$. The error is an explicit and exponentially decaying input of the observer equation (see \cref{lem:error}).
	
	We call the measured signal $Ce=C\zhat-y$ the output residual.
	It is the part of the measurement that is not predicted by the observer. With a slight abuse, we also call output residual its injected version $LCe=L(C\zhat-y)$, which is the correction term of the observer. The output residual term is available in real time, since it only depends on the observer and on the measurement, whereas $e(t)$ itself is unknown in general (since $C$ is in general not injective).
	
	\subsection{The event-triggering mechanism}\label{sec:intro-rule}
	
	\paragraph{Monitored signal.} 
	To treat at once full-state sampling and sampling of a finite-dimensional component of the state (\cref{sec:dwell-bounded}), we monitor a signal 
	$$q(t)\coloneqq Q\zhat(t),$$ where $\HQ$ is a Hilbert space and $Q\in\Lcal(\Hcal,\HQ)$, $D\in\Lcal(\HQ,\Hcal)$ satisfy
	\begin{equation}\label{eq:factor}
		BK=DQ.
	\end{equation}
	Full-state sampling corresponds to $\HQ=\Hcal$, $Q=I$ and $D=BK$, so that $q(t)=\zhat(t)$. By \eqref{eq:factor}, the sampled control term satisfies $BK\zhat(t_k)=Dq(t_k)$, and the sampling perturbation can be written as
	\begin{equation*}
		BK(\zhat(t_k)-\zhat(t))=D(q(t_k)-q(t)).
	\end{equation*}
	
	\paragraph{The triggering rule.} Given $t_k<\infty$, the next instant is given by
	\begin{equation}\label{eq:rule}
		t_{k+1}\coloneqq\sup\Big\{t>t_k,\quad \normQ{q(\tau)-q(t_k)}\le\Theta(\tau)\ \ \text{for all }\tau\in[t_k,t)\Big\}\in(t_k,\infty],
	\end{equation}
	with the triggering threshold
	\begin{equation}\label{eq:threshold}
		\Theta(\tau)\coloneqq\gamma\normQ{q(\tau)}+m(\tau),\qquad \gamma\ge0 .
	\end{equation}
	The threshold has two parts.
	\begin{itemize}[leftmargin=*]
		\item The relative part $\gamma\normQ{q}$ is proportional to the current size of the monitored signal.
		\item The internal variable $m\colon\R_+\to\R_+$ is a function of time which does not depend on the observer state (see \cref{rem:what-m} below).
	\end{itemize}
	In \eqref{eq:rule}, for $\tau\in[t_k,t)$, $q(\tau)$ is the value of the monitored signal along the trajectory on which the control is frozen at $K\zhat(t_k)$. The instant $t_{k+1}$ is the first time at which this frozen trajectory leaves the set allowed by the threshold. The triggering instants and the trajectory are therefore constructed together, interval by interval. This is made precise in \cref{def:solution}, and \cref{thm:main-wp} below shows that this construction is well defined.
	
	\begin{remark}[Why a positive internal variable]\label{rem:why-m}
		The static relative rule, which corresponds to $m\equiv0$, may fail to define a sequence of events. Consider an event at which $\zhat(t_k)=0$, as happens at $t_0=0$ when the observer is initialized at $\zhat_0=0$. Then $q(t_k)=0$, and both sides of the inequality in \eqref{eq:rule} vanish at $\tau=t_k$. However, immediately after $t_k$, the observer is driven by the output residual. Indeed, the frozen control $K\zhat(t_k)$ vanishes, and the increment identity of \cref{lem:increment} gives
		\[
		q(\tau)=(\tau-t_k)\,QLCe(t_k)+o(\tau-t_k)\qquad\text{as }\tau\to t_k^+ .
		\]
		Hence, if $\gamma<1$ and $QLCe(t_k)\ne0$, then
		\[
		\normQ{q(\tau)-q(t_k)}=\normQ{q(\tau)}>\gamma\normQ{q(\tau)} \quad \text{for every } \quad \tau>t_k \quad \text{close to } t_k.
		\]
		The set in \eqref{eq:rule} is empty, the next event would be $t_k$ itself, and the rule does not produce a strictly increasing sequence of events. In finite dimension, this lack of event separation under output feedback was pointed out in \cite{BorgersHeemels2014}. The internal variable prevents it, since the threshold is then positive at every event. We thus equip the triggering rule with a positive internal variable, in the spirit of the dynamic triggering mechanisms of \cite{Girard2015Dynamic}.
	\end{remark}
	
	\begin{remark}[What the internal variable may depend on]\label{rem:what-m}
		Once the initial datum is fixed, the internal variable is a prescribed function of time. In particular, it does not depend explicitely on the observer state $\zhat$ or on the triggering instants.
		Indeed, since $e(t)=T_L(t)e_0$ is independent of the triggering mechanism, any functional of the measured output residual $t\mapsto LCe(t)=L(C\zhat(t)-y(t))$ is a valid choce.
		The variable $m$ may however depend on the initial error $e_0$.
		How such a dependence can be implemented is discussed in \cref{rem:implement}.
		Typical choices, studied in \cref{sec:thresholds}, are, for $t\ge0$,
		\[
		m(t)=m_0e^{-\mu t},\quad \text{or} \quad \dot m=-\mu m+\zeta\norm{Ce}_{\Ycal},\quad \text{or} \quad m(t)=m_0e^{-\mu t}+\eta\sup_{0\le s\le t}e^{-\nu(t-s)}\normH{LCe(s)} .
		\]
		The last expression dominates the output residual over unit time windows with a constant that does not depend on the data. This is the key to the dwell-times of \cref{thm:main-dw}.
	\end{remark}
	
	To prove the well-posedness of event-triggered control system \eqref{eq:obs-err}--\eqref{eq:threshold} we assume that 
	\begin{alignat}{2}
		&\tag{H1}\quad& &m\in C^0(\R_+;\R_+),\label{H1}\\
		&\tag{H2}\quad& &m(t)>0\quad\text{for every }t\ge0.\label{H2}
	\end{alignat}
	Under \eqref{H1}, \eqref{H2} is equivalent to $\inf_{[0,T]}m>0$ for every $T>0$, since a continuous positive function attains a positive minimum on a compact interval.
	
	To show exponential stability, the internal variable is required to decay exponentially, i.e.
	\begin{equation}\tag{H3}\label{H3}
		\exists\,C_m\ge0,\ \exists\,\mu>0,\quad m(t)\le C_me^{-\mu t}\quad\text{for every }t\ge0.
	\end{equation}
	
	\begin{remark}[Why the two parts of the threshold are kept apart]\label{rem:gamma-vs-m}
		Formally, \eqref{eq:rule} involves a single threshold $\Theta$, and one may wish to absorb $\gamma\normQ{q}$ into $m$. We keep the two parts apart because they obey different hypotheses and play different roles.
		\begin{itemize}[leftmargin=*]
			\item Hypotheses \eqref{H1}, \eqref{H2} and \eqref{H3} can be placed on $m$ but not on $\Theta$. Indeed, positivity fails where $q$ vanishes, and a decay assumption on $\Theta$ would presuppose the decay of $\zhat$, which is what we want to prove.
			\item In the stability analysis, $\gamma\normQ{q}$ is a feedback term that must be absorbed (small-gain condition), whereas $m$ is a forcing term that only costs its decay.
			\item In the dwell-time analysis, the roles are reversed. $\gamma\normQ{q}$ makes the threshold comparable to the monitored signal, whereas $m$ keeps the threshold away from zero and dominates the output residual $LCe$, which $\gamma\normQ{q}$ cannot do since $e$ does not depend on $\zhat$.
		\end{itemize}
	\end{remark}
	
	Endogenous parts other than $\gamma\normQ{q}$, depending for instance on the last sampled state $\zhat(t_k)$, could also be considered. The well-posedness theory of \cref{sec:main-wp} covers them (\cref{rem:general-threshold}), but the stability and dwell-time analyses of this paper are specific to the relative part $\gamma\normQ{q}$.
	
	\paragraph{Event sequences.} The sequence $(t_k)\coloneqq(t_k)_k$ generated by \eqref{eq:rule} depends on the initial datum $(\zhat_0,e_0)$, on the system and on the design parameters.\\ 
	Under \eqref{H1}-\eqref{H2}, it is strictly increasing with $t_0=0$, and either infinite, or finite ($0=t_0<\dots<t_N$), in which case we set $t_{N+1}\coloneqq\infty$. An index $k$ is admissible if $t_k<\infty$.\\ 
	For every admissible $k$, $[t_k,t_{k+1})$ is an inter-event interval, and all statements involving $k$ are understood for admissible indices only. We set
	\[
	T^*\coloneqq\lim_{k\to\infty}t_k\in(0,\infty]\ \ \text{if the sequence is infinite},\qquad T^*\coloneqq\infty\ \ \text{otherwise}.
	\]
	The intervals $[t_k,t_{k+1})$, $k$ admissible, are disjoint and their union is $[0,T^*)$.\\ 
	For $s\in[0,T^*)$, $k(s)$ denotes the admissible index such that $s\in[t_{k(s)},t_{k(s)+1})$.
	
	\subsection{Main results}\label{sec:main}
	
	We write 
	
	\begin{equation}\label{defN0}N_0\coloneqq\normH{\zhat_0}+\normH{e_0}\end{equation} for the size of the initial datum.\\ 
	Our first result states that the event-triggered closed loop is well defined and free of Zeno behavior, under the sole well-posedness and boundedness assumptions.
	
	\begin{theorem}[Well-posedness and no Zeno]\label{thm:main-wp}
		Let \cref{A:gen,A:BK,A:CL} hold, let $\gamma\ge0$ and let $m$ satisfy \eqref{H1}--\eqref{H2}.
		\begin{enumerate}[label=\textup{(\roman*)}]
			\item \textup{(Existence and uniqueness.)} For every initial datum $(\zhat_0,e_0)\in\Hcal\times\Hcal$, the rule \eqref{eq:rule}--\eqref{eq:threshold} generates a unique solution of \eqref{eq:obs-err}, in the sense of \cref{def:solution}. It consists of an event sequence $(t_k)$ and of an observer trajectory $\zhat\in C([0,T^*);\Hcal)$, which is the mild solution of \eqref{eq:obs-err} on each inter-event interval.
			\item \textup{(Exclusion of Zeno behavior.)} Moreover, the events do not accumulate in finite time, i.e. $T^*=\infty$.
		\end{enumerate}
	\end{theorem}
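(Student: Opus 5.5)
The construction goes interval by interval, by induction on $k$. Since $e(t)=T_L(t)e_0$ is independent of the events, $LCe\in C(\R_+;\Hcal)$ is a fixed forcing term. Suppose $t_k<\infty$ and $\zhat(t_k)$ are already known. On $[t_k,\infty)$ we define the frozen trajectory
\[
\zhat_k(t)=e^{(t-t_k)\Acal}\zhat(t_k)+\int_{t_k}^t e^{(t-s)\Acal}\big(BK\zhat(t_k)+LCe(s)\big)\,ds .
\]
This is the unique mild solution of \eqref{eq:obs-err} with the control held at $K\zhat(t_k)$, and it is continuous in $t$ with values in $\Hcal$.

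We then set $q_k=Q\zhat_k$ and $\phi_k(\tau)=\normQ{q_k(\tau)-q(t_k)}-\gamma\normQ{q_k(\tau)}-m(\tau)$, which is continuous by \eqref{H1}. Because $\phi_k(t_k)=-(\gamma\normQ{q(t_k)}+m(t_k))\le -m(t_k)<0$ by \eqref{H2}, the set in \eqref{eq:rule} contains some $t>t_k$. Hence $t_{k+1}\in(t_k,\infty]$ is well defined, and it is the first zero of $\phi_k$ after $t_k$. If $t_{k+1}<\infty$, we put $\zhat(t_{k+1})=\zhat_k(t_{k+1})$ and continue; otherwise the construction stops.

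Uniqueness holds because any solution in the sense of \cref{def:solution} must coincide with $\zhat_k$ on $[t_k,t_{k+1})$, by uniqueness of mild solutions with a fixed forcing. Its next event is then forced by \eqref{eq:rule}, so induction on $k$ gives uniqueness. Continuity of $\zhat$ across events holds by construction, which gives (i).

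For (ii) we argue by contradiction: suppose $T^*<\infty$. The first step is an a priori bound on $[0,T^*)$. On each interval, $\normH{BK\zhat(t_k)}\le\norm{BK}\sup_{[0,t]}\normH{\zhat}$. Using \eqref{eq:growth} with $C_\Acal e^{\omega_\Acal^+T^*}$, Gronwall applied to $\sup_{[0,t]}\normH{\zhat}$ (or simply viewing the sampled term as a bounded delay-type perturbation) gives $\sup_{[0,T^*)}\normH{\zhat}\le R<\infty$ with $R$ depending on $N_0$ and $T^*$.

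The main step, and the expected obstacle, is a uniform lower bound on the inter-event times near $T^*$. The increment $\zhat(\tau)-\zhat(t_k)$ does not go to zero uniformly in the data, since $e^{h\Acal}-I\not\to0$ in norm (cf. \cref{lem:nonunif}). The way around this is to use compactness of the actual trajectory rather than uniform operator bounds. The limit $\zhat(T^{*-})$ exists: writing the Duhamel formula from $0$ with forcing $f(s)=BK\zhat(t_{k(s)})+LCe(s)$, which is bounded by the a priori bound, we get $\zhat(t)=e^{t\Acal}\zhat_0+\int_0^te^{(t-s)\Acal}f(s)\,ds$. This expression is uniformly continuous on $[0,T^*]$ because $f\in L^\infty$ and the semigroup is strongly continuous. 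Hence $\zhat$ extends continuously to $[0,T^*]$, so it is uniformly continuous there, and so is $q=Q\zhat$.

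With this in hand, $\rho\coloneqq\min_{[0,T^*]}m>0$ by \eqref{H1}--\eqref{H2}. Uniform continuity gives $\delta>0$ with $\normQ{q(\tau)-q(s)}<\rho$ whenever $|\tau-s|<\delta$ in $[0,T^*)$. At $\tau=t_{k+1}<\infty$, continuity of $\phi_k$ gives $\normQ{q(t_{k+1})-q(t_k)}=\Theta(t_{k+1})\ge\rho$, so $t_{k+1}-t_k\ge\delta$ for every $k$. This contradicts the accumulation $t_k\to T^*<\infty$, so $T^*=\infty$.
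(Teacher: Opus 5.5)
Your proof is correct and follows the paper's route. The paper likewise constructs the frozen continuations interval by interval, with $m>0$ ensuring $t_{k+1}>t_k$, proves uniqueness by induction, and excludes Zeno behavior through a Gronwall bound, the Duhamel representation from $0$ with bounded forcing (which gives continuity of $\zhat$ up to $T^*$), and the contact identity $\normQ{q(t_{k+1})-q(t_k)}\ge\min_{[0,T^*]}m>0$. The paper phrases this last step via the Cauchy property of $\zhat(t_k)$ rather than uniform continuity, which is equivalent.

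One small inaccuracy: $t_{k+1}$ is the supremum in \eqref{eq:rule}, and this need not be the first zero of $\phi_k$ when $\phi_k$ touches $0$ without becoming positive. This is harmless, because your argument only uses the contact identity $\phi_k(t_{k+1})=0$, and that identity holds at the supremum as well.
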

	
	No stability assumption is needed, and the result holds in particular when the threshold reduces to the internal variable ($\gamma=0$). It is proved in \cref{sec:wp,sec:nozeno}.
	
	\begin{theorem}[Exponential stability]\label{thm:stab}
		Let \cref{A:gen,A:BK,A:Kstab,A:CL,A:Lstab} hold, let $m$ satisfy \eqref{H1}--\eqref{H3}. Let $\lambda>0$ and $\gamma\ge0$ satisfy
		\begin{equation}\label{eq:smallgain}
			\lambda\le\min\{\alpha_L,\mu\},\qquad\lambda<\alpha_K,\qquad\kappa_\gamma\coloneqq M_K\norm{D}\norm{Q}\gamma<\alpha_K-\lambda .
		\end{equation}
		Then there exists $C_\lambda>0$, independent of the initial datum, such that, for every $(\zhat_0,e_0)\in\Hcal\times\Hcal$ and every $t\ge0$,
		\begin{equation}\label{eq:stab}
			\normH{\zhat(t)}\le C_\lambda\big(N_0+C_m\big)e^{-\lambda t},
		\end{equation}
		and consequently
		\begin{equation}\label{eq:stab-full}
			\normH{z(t)}+\normH{\zhat(t)}\le\big(2C_\lambda+M_L\big)\big(N_0+C_m\big)e^{-\lambda t},\qquad t\ge0,
		\end{equation}
		where $N_0$ is defined in \eqref{defN0}.
	\end{theorem}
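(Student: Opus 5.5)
We rewrite the observer equation in \eqref{eq:obs-err} around the stable generator $\AK$. On each inter-event interval $[t_k,t_{k+1})$, the factorization \eqref{eq:factor} gives
\[
\dot{\zhat}=\AK\zhat+D\big(q(t_k)-q(t)\big)+LCe(t).
\]
By \cref{thm:main-wp}, $T^*=\infty$ and $\zhat$ is continuous on $[0,\infty)$. We then glue the mild formulations across events and obtain, for all $t\ge0$,
\[
\zhat(t)=T_K(t)\zhat_0+\int_0^t T_K(t-s)\Big(D\big(q(t_{k(s)})-q(s)\big)+LCe(s)\Big)\,ds .
\]
This follows from the bounded perturbation theorem: the mild solution for $\Acal$ with bounded forcing $BK\zhat(t_k)+LCe$ coincides with the mild solution for $\AK$ with forcing $BK(\zhat(t_k)-\zhat)+LCe$. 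The integrand is piecewise continuous, and concatenating the intervals is legitimate by the semigroup property.

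Next we use the triggering rule. For $s\in[t_k,t_{k+1})$, \eqref{eq:rule} gives $\normQ{q(s)-q(t_k)}\le\gamma\normQ{q(s)}+m(s)\le\gamma\norm{Q}\normH{\zhat(s)}+C_me^{-\mu s}$ by \eqref{H3}. For the error, $\normH{e(s)}\le M_Le^{-\alpha_Ls}\normH{e_0}$. Set $\phi(t)\coloneqq e^{\lambda t}\normH{\zhat(t)}$, multiply the Duhamel formula by $e^{\lambda t}$, and use \cref{A:Kstab}:
\[
\phi(t)\le M_K\normH{\zhat_0}+\int_0^t M_Ke^{-(\alpha_K-\lambda)(t-s)}\Big(\kappa_\gamma'\phi(s)+\norm{D}C_me^{(\lambda-\mu)s}+\norm{LC}M_Le^{(\lambda-\alpha_L)s}\normH{e_0}\Big)ds,
\]
where $M_K\kappa_\gamma'=\kappa_\gamma=M_K\norm{D}\norm{Q}\gamma$. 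Since $\lambda\le\mu$ and $\lambda\le\alpha_L$, the forcing exponentials are bounded by $1$. The kernel integrates to at most $1/(\alpha_K-\lambda)$, so the forcing contributes at most $\frac{M_K}{\alpha_K-\lambda}\big(\norm{D}C_m+\norm{LC}M_L\normH{e_0}\big)$.

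The main obstacle is closing the estimate on the feedback term, since $\phi$ appears on both sides. We avoid Gronwall, which would destroy the decay, and use a sup argument instead. Fix $T>0$ and set $\Phi_T\coloneqq\sup_{[0,T]}\phi$, which is finite by continuity of $\zhat$ on $[0,T]$. Taking the supremum over $t\in[0,T]$ gives
\[
\Phi_T\le M_K\normH{\zhat_0}+\frac{\kappa_\gamma}{\alpha_K-\lambda}\Phi_T+\frac{M_K}{\alpha_K-\lambda}\big(\norm{D}C_m+\norm{LC}M_L\normH{e_0}\big).
\]
The small-gain condition \eqref{eq:smallgain} gives $\frac{\kappa_\gamma}{\alpha_K-\lambda}<1$, so we can absorb the $\Phi_T$ term. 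This yields $\Phi_T\le C_\lambda(N_0+C_m)$, with $C_\lambda$ depending only on $M_K,M_L,\alpha_K,\lambda,\kappa_\gamma,\norm{D},\norm{LC}$. Letting $T\to\infty$ gives \eqref{eq:stab}.

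Finally we recover the bound on $z$. Since $z=\zhat-e$,
\[
\normH{z}+\normH{\zhat}\le2\normH{\zhat}+\normH{e}.
\]
Using $\normH{e(t)}\le M_Le^{-\alpha_Lt}\normH{e_0}\le M_L(N_0+C_m)e^{-\lambda t}$, which holds because $\lambda\le\alpha_L$, we obtain \eqref{eq:stab-full}.
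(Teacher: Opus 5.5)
Your proposal is correct and follows the paper's proof essentially step by step. The rewriting around $\AK$ is \cref{cor:global} (obtained via \cref{lem:change-gen}), and your supremum-and-absorb argument on $[0,T]$ is exactly \cref{prop:criterion} combined with \cref{cor:rate}; it yields the same explicit constant \eqref{eq:stab_C} and the same final bound on $z=\zhat-e$.
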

	
	\Cref{thm:stab} is proved in \cref{sec:stab}, where $C_\lambda$ is given explicitly (see \eqref{eq:stab_C}). The small-gain condition on $\kappa_\gamma$ only concerns the relative part, and it disappears when $\gamma=0$. The estimate is not homogeneous in the initial datum, because the internal variable is an exogenous design parameter (\cref{rem:Cm}). 
	
	We next turn to dwell-times. A number $\tau_0>0$ is a dwell-time if $t_{k+1}-t_k\ge\tau_0$ for every admissible $k$ with $t_{k+1}<\infty$. It is independent of the initial datum if it holds for every $(\zhat_0,e_0)\in\Hcal\times\Hcal$ (see \cref{def:dwell}). By the discussion of \cref{sec:intro}, the existence of a dwell-time independent of the data requires to control the dynamics of the monitored signal over short times. This is possible when this dynamics is bounded, in the following sense.
	\begin{assumption}[Bounded sampled dynamics]\label{A:BSD}
		There exists $\Acal_Q\in\Lcal(\HQ)$ such that
		\begin{equation*}
			Qe^{t\Acal}=e^{t\Acal_Q}Q \quad \text{for every } t\ge0.
		\end{equation*}
	\end{assumption}
	\Cref{A:BSD} holds with $\Acal_Q=\Acal$ when $\mathcal H_Q=\mathcal H$, $Q=I$ and $\Acal$ is bounded, in particular in finite dimension. It also holds when $Q$ is the projection onto a finite-dimensional invariant subspace containing the unstable modes of $\Acal$, which amounts to monitoring only these modes (\cref{cor:dw-spectral}). This covers parabolic equations with finitely many unstable modes (\cref{sec:examples}).
	
	\begin{theorem}[Dwell-time]\label{thm:main-dw}
		\leavevmode
		\begin{enumerate}[label=\textup{(\roman*)}]
			\item \textup{(Bounded sampled dynamics.)} Let \cref{A:gen,A:BK,A:CL,A:Lstab,A:BSD} hold, let $\gamma>0$, and let $m$ be the internal variable
			\begin{equation}\label{eq:supfilter-intro}
				m(t)=m_0e^{-\mu t}+\eta\sup_{0\le s\le t}e^{-\nu(t-s)}\normH{LCe(s)},\qquad m_0,\mu,\eta,\nu>0 .
			\end{equation}
			Then there exists $\tau_0>0$, depending only on $\norm{\Acal_Q}$, $\norm{QD}$, $\norm{Q}$, $\gamma$, $\eta$ and $\nu$, which is a dwell-time independent of the initial datum.\\ 
			If moreover \cref{A:Kstab} holds and $\lambda>0$ satisfies $\lambda\le\min\{\alpha_L,\mu,\nu\}$, $\lambda<\alpha_K$ and $\kappa_\gamma<\alpha_K-\lambda$ (where $\kappa_\gamma$ is defined in \eqref{eq:smallgain}), the closed loop is also exponentially stable at rate $\lambda$, in the sense of \eqref{eq:stab-full}.
			\item \textup{(Unbounded generators.)} There exists a system satisfying \cref{A:gen,A:BK,A:Kstab,A:CL,A:Lstab}, with $\Acal$ unbounded, for which no triggering rule admits both a dwell-time independent of the initial datum and bounded trajectories.
		\end{enumerate}
	\end{theorem}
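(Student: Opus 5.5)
For part (i), I will work on a single inter-event interval $[t_k,t_{k+1})$ and bound the growth of the deviation $\delta(\tau)\coloneqq q(\tau)-q(t_k)$ as a function of $h=\tau-t_k$.

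\emph{Evolution of $q$.} Applying $Q$ to the mild formulation of \eqref{eq:obs-err} and using \cref{A:BSD}, I get
\[
q(\tau)=e^{h\Acal_Q}q(t_k)+\int_{t_k}^{\tau}e^{(\tau-s)\Acal_Q}\big(QD\,q(t_k)+QLCe(s)\big)\,ds ,
\]
so that $q$ solves the bounded ODE $\dot q=\Acal_Q q+QD\,q(t_k)+QLCe$ on $\HQ$.

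\emph{Bound on the deviation.} Writing $e^{h\Acal_Q}-I=\int_0^h\Acal_Qe^{s\Acal_Q}ds$ (\cref{lem:bounded-incr}) gives, for $h\le1$,
\[
\normQ{\delta(\tau)}\le h\,e^{\norm{\Acal_Q}}\Big((\norm{\Acal_Q}+\norm{QD})\normQ{q(t_k)}+\norm{Q}\sup_{s\in[t_k,\tau]}\normH{LCe(s)}\Big).
\]

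\emph{Lower bound on the threshold.} I bound the two parts of $\Theta$ separately.
\begin{itemize}[leftmargin=*]
\item For the relative part, as long as $\normQ{\delta}\le\Theta$ holds, I use $\normQ{q(\tau)}\ge\normQ{q(t_k)}-\normQ{\delta(\tau)}$. Alternatively, I simply argue at the first time $\tau$ where $\normQ{\delta(\tau)}=\Theta(\tau)$ (continuity plus \eqref{H2} make this the triggering time). There, $\gamma\normQ{q(\tau)}\ge\gamma\normQ{q(t_k)}-\gamma\normQ{\delta(\tau)}$.
\item For the internal part, I use the form \eqref{eq:supfilter-intro}. Taking $s\in[t_k,\tau]$ in the supremum gives $m(\tau)\ge\eta e^{-\nu h}\sup_{s\in[t_k,\tau]}\normH{LCe(s)}\ge\eta e^{-\nu}\sup_{[t_k,\tau]}\normH{LCe}$ for $h\le1$. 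This is where $m$ dominates the residual over unit windows, independently of the data.
\end{itemize}

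\emph{Dwell-time.} At the triggering time, combining the two bounds gives
\[
(1+\gamma)\normQ{\delta}\ge\gamma\normQ{q(t_k)}+\eta e^{-\nu}\sup\normH{LCe}.
\]
Comparing with the upper bound on $\normQ{\delta}$ forces
\[
h\ge\tau_0\coloneqq\min\Big\{1,\ \frac{e^{-\norm{\Acal_Q}}}{1+\gamma}\min\Big(\frac{\gamma}{\norm{\Acal_Q}+\norm{QD}},\ \frac{\eta e^{-\nu}}{\norm{Q}}\Big)\Big\},
\]
which depends only on the allowed quantities. The degenerate case $q(t_k)=0$ and $LCe\equiv0$ on the window is harmless, since then $\delta\equiv0<m$. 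Well-posedness is taken from \cref{thm:main-wp}, since $m$ clearly satisfies \eqref{H1}--\eqref{H2}.

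\emph{Stability in part (i).} I apply \cref{thm:stab}, checking \eqref{H3} with rate $\min\{\mu,\nu'\}$. Using $\normH{LCe(s)}\le\norm{LC}M_Le^{-\alpha_Ls}\normH{e_0}$, I get
\[
m(t)\le m_0e^{-\mu t}+\eta\norm{LC}M_L\normH{e_0}\,e^{-\min(\nu,\alpha_L)t}.
\]
Here $C_m$ depends on $e_0$. Since $\lambda\le\min\{\alpha_L,\mu,\nu\}$, the estimate of \cref{thm:stab} still yields \eqref{eq:stab-full} after absorbing $\normH{e_0}\le N_0$ into the constant. I will check that the proof of \cref{thm:stab} tolerates a data-dependent $C_m$, which it should, since $m$ enters only as a forcing term.

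For part (ii), I will take $\Hcal=\ell^2$ with $\Acal$ diagonal and purely imaginary, of the form $\Acal\phi_n=i n\phi_n$ (or $-n+in$ to keep \cref{A:Kstab} trivial). I take $B$ with a small damping feedback: for instance $B=K=I$ on a shifted version, so that $\Acal-I$ is stable. I set $C=I$ and $L=-I$. Given any rule with dwell-time $\tau_0$, I pick $n$ with $n\tau_0$ close to $\pi$ modulo $2\pi$ and the initial datum $\zhat_0=\phi_n$, $e_0=0$. Between events the frozen control pushes with $K\phi_n$, while the free mode rotates by phase $n\tau$. Over a hold of length at least $\tau_0$, the feedback then acts in the wrong direction. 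I expect this choice to give an amplification factor $>1$ per interval, uniformly in $n$, and hence unbounded trajectories.

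The main obstacle is (ii): making the "wrong-phase" argument rigorous for arbitrary (possibly irregular, $\ge\tau_0$) inter-event sequences, not just periodic ones. I would first try to choose the gain so that on each high mode the one-step map $\zhat(t_{k+1})=\big(e^{h\Acal}+\int_0^he^{s\Acal}ds\,BK\big)\zhat(t_k)$ has modulus $\ge1$ for every $h\ge\tau_0$ at a suitable $n$ depending on $\tau_0$. If that fails, I would exploit that the trigger may choose $h$ adversarially and instead select, via a pigeonhole over many modes, one mode on which growth occurs.
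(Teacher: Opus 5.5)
\textbf{Part (i).} This is correct and follows the paper's route. \cref{prop:examples}(E3) gives \eqref{H4} with $C_e=e^{\nu}\eta^{-1}$, uniformly in the data, and \cref{thm:dw-bounded}(i) combines \cref{lem:incr-bdd} with \cref{lem:size}, which is your reverse-triangle step. Your $\tau_0$ differs from \eqref{eq:tau-i} only in how the two terms are combined. The stability claim is obtained exactly as you do it: \eqref{H3} holds with rate $\min\{\mu,\nu,\alpha_L\}$ and a data-dependent $C_m$, and \cref{thm:stab} then applies.

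\textbf{Part (ii).} Here there is a genuine gap. Neither candidate system is a counterexample, so no refinement of the wrong-phase or pigeonhole argument can succeed.

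\emph{The damped choice.} With $\Acal\phi_n=(-n+in)\phi_n$ the free semigroup is already exponentially stable. The scheme that never triggers then has bounded trajectories.

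\emph{The conservative choice.} Take $\Acal\phi_n=in\phi_n$ with $BK=LC=-I$. (With $B=K=I$ as written, $\AK=\Acal+I$ would not even be stable.) For $e_0=0$, the one-step map on mode $n$ is
\[
a(t_k+h)=\Big[e^{inh}-\frac{e^{inh}-1}{in}\Big]a(t_k).
\]
Its modulus lies in $[1-2/n,\,1+2/n]$, and it equals $1$ exactly when $h$ is a multiple of $2\pi$. Two schemes then have a dwell-time independent of the data and bounded trajectories for all data:
\begin{itemize}
\item Periodic sampling with period $2\pi$ returns every mode to its value at the previous event. The forcing satisfies $\normH{LCe(t)}\le e^{-t}\normH{e_0}$, so its contribution stays bounded.
\item The never-triggering scheme also works, because the dwell-time condition of \cref{def:scheme} is then vacuous, and $|a_n(t)|\le(1+2/n)|a_n(0)|$.
\end{itemize}
The point you miss is that the scheme chooses every $h\ge\tau^*$, including $h=\infty$. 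The growth must therefore be independent of the phase $nh$, and it must come from the free dynamics itself.

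\emph{What the paper does instead.} \cref{ex:counter} takes $\Acal\phi_n=(1+in)\phi_n$, $B=C=I$, $K=L=-2I$. Then $\AK=\AL=\Acal-2I$, while every mode grows like $e^{t}$ under the free flow. For $\zhat_0=z_0=\phi_n$, so that $e\equiv0$, \cref{lem:reduction} gives
\[
a(t_k+h)=\Big[\frac{-1+in}{1+in}e^{(1+in)h}+\frac{2}{1+in}\Big]a(t_k),\qquad |a(t_k+h)|\ge\Big(e^h-\frac{2}{\sqrt{1+n^2}}\Big)|a(t_k)|.
\]
The held feedback contributes only $O(1/n)$, against the modulus $e^h$ of the free part. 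Choose $n$ with $e^{\tau^*}-2/\sqrt{1+n^2}>1$. Then every inter-event interval of length at least $\tau^*$ multiplies $|a|$ by a factor larger than $1$, whatever its length. If the event sequence is finite with last event $t_K$, the last, infinite, interval gives $|a(t_K+h)|\to\infty$. This finite-sequence case is precisely the loophole your conservative example leaves open.
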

	
	\Cref{thm:main-dw} is proved in \cref{sec:dwell}. Part (i) applies to finite-dimensional systems and to systems with a finite-dimensional unstable part, of which only the unstable modes are monitored (\cref{cor:dw-finite,cor:dw-spectral}), and $\tau_0$ is explicit (\cref{thm:dw-bounded}). Part (ii) holds for any rule deciding the events, the control being held constant between events (\cref{def:scheme,thm:necessity}). It shows that the restriction to bounded sampled dynamics is not a limitation of our triggering rule, but is intrinsic to some systems. Its mechanism is the non-uniform continuity of $C_0$-semigroups with unbounded generators (\cref{lem:nonunif}).
	
	\Cref{sec:dwell} also contains more specific results. When the threshold reduces to the internal variable ($\gamma=0$), a dwell-time independent of the data requires to normalize $m$ by the size $N_0$ of the data (\cref{cor:dw-normalised,prop:gamma0-nec}). 
	
	\begin{remark}[Energy space]\label{rem:energy_space}
		In the work of Baudouin and Ervedoza \cite{BaudouinErvedoza2025}, well-posedness is
		established for initial data in $D(\Acal)$. Here, well-posedness,
		exclusion of Zeno behavior and exponential decay are established
		directly in $\Hcal\times\Hcal$, without assuming that the initial
		data belong to $D(\Acal)\times D(\Acal)$. 
	\end{remark}
	
	\subsection{Related references}
	The seminal work \cite{Tabuada2007} introduced a modern formulation of event-triggered control for nonlinear finite-dimensional systems.
	It highlighted state-dependent triggering mechanisms guaranteeing closed-loop stability while reducing communication load. Subsequent developments extended the framework to distributed and multi-agent systems \cite{DimarogonasFrazzoliJohansson2012}, while the survey paper \cite{HeemelsJohanssonTabuada2012} provided a systematic treatment of stability, Zeno behaviour and robustness issues.
	
	In a nutshell, most of the related existing literature concerning linear systems are governed by ordinary differential equations (ODE) of the form
	\begin{equation*}
		\dot{x}(t) = Ax(t) + Bu(t),\quad t\geq 0,
	\end{equation*}
	where $A$ and $B$ are finite-dimensional matrices and $x(\cdot)$ and $u(\cdot)$ are respectively the state and the control input. 
	In this setting, well-posedness and stability of the system involve Lyapunov-based arguments, norm equivalence and compactness properties.\\
	In the event-triggered framework, the control input is updated only at discrete instants $\left(t_k\right)_{k}$ determined by a triggering condition and held constant between updates via a zero-order hold.
	Classical triggering rules typically involve a comparison between the measurement deviation
	and the state norm, for instance implying that
	\begin{equation*}
		\lVert x(t_k)-x(t)\rVert^2 < \sigma \lVert x(t)\rVert^2,\quad t\in[t_k,t_{k+1}),
	\end{equation*}
	where $\sigma \in (0,1)$ is a design parameter and $x(t_k)-x(t)$ represents the sampling-induced deviation.
	The event-triggered system then reads \begin{equation*}
		\begin{cases}
			\dot{x}(t) = Ax(t) + Bu(t),\quad &t\geq 0,\\
			u(t) = u(t_k), \quad &t\in[t_k,t_{k+1}),
		\end{cases}
	\end{equation*}
	and the triggering mechanism determines when a new event occurs.
	Under suitable assumptions, such rule ensures exponential stability and guarantee strictly positive inter-event times. This latter property
	excludes Zeno which refers to the phenomenon in event-triggered control where an infinite number of events (here control updates) occur within a finite time interval.\\ The introduction of dynamic variables in the triggering condition has further enhanced these results by providing additional flexibility. It also simplifies the analysis of the triggering mechanism, facilitating the exclusion of the Zeno behaviour.
	
	In the finite-dimensional setting, the theory has reached a certain maturity.
	Static triggering rules were first studied in foundational works such as \cite{Tabuada2007,HeemelsJohanssonTabuada2012}, while periodic and self-triggered variants were developed to reduce monitoring effort and communication burden \cite{DonkersHeemels2012,HeemelsDonkersTeel2013}. More recently, dynamic triggering mechanisms have been shown to offer additional degrees of freedom \cite{Girard2015Dynamic, YiJohanssonDimarogonas2017}, enabling guarantees on the absence of Zeno behaviour while improving performance and reducing communication demands. Lyapunov-based methods have proven particularly successful, leading to conditions for exponential stability and explicit bounds on inter-event times. For nonlinear systems, a general framework was provided in \cite{PostoyanTabuadaNesicAnta2015}.\\
	However, many physical systems of interest are inherently infinite-dimensional. Applications such as fluid dynamics, heat transfer, vibrating structures and delay systems are naturally modeled by PDE or functional differential equations, leading to state spaces of infinite dimension. This motivates the extension of event-triggered control ideas to the infinite-dimensional framework. Such systems are typically modeled in Hilbert (or Banach) spaces as
	\begin{equation*}
		\dot{z}(t) = \mathcal{\Acal}z(t) + \mathcal{B}u(t),\quad t\geq 0,
	\end{equation*}
	where $\mathcal{\Acal}$ is an unbounded operator generating a strongly continuous semigroup. 
	
	The transition from finite to infinite dimensions is far from trivial. New challenges arise. The operators involved may be unbounded (and thus may require admissibility conditions), the state space is no longer locally compact and the spectral properties are more complex. Therefore, the infinite-dimensional setting imposes additional conditions on the operators involved and requires more delicate estimates. Moreover, the Lyapunov-based approach must be carefully adapted to handle the semigroup setting. Accordingly, the definition of event-triggering mechanisms must respect the nature of the infinite-dimensional state.
	
	Stabilization and observation of PDE systems have been extensively 
	studied in the literature, with comprehensive treatments available in monographs such as \cite{Pazy1983,CurtainZwart1995,TucsnakWeiss2009} or 
	foundational articles including \cite{LasieckaTriggiani2000,Staffans2005,Weiss1989}. 
	In contrast, the development of event-triggered mechanisms for such systems remains 
	comparatively recent and far less complete.
	Indeed, ensuring well-posedness of the closed-loop system and guaranteeing positive minimum inter-event times require refined arguments that are not direct extensions of the finite-dimensional case. The dynamic triggering mechanism, which has proven effective in the finite-dimensional case, offers particular promise in the infinite-dimensional setting \cite{EspitiaGirardMarchandPrieur2016,KoudohodeBaudouinTarbouriech2022} as it can compensate for the lack of local compactness and provide the necessary regularity to exclude Zeno behaviour. 
	
	In many practical situations, full-state measurements are not available, and stabilization must rely on output feedback. In the finite-dimensional setting, observer-based event-triggered control has been investigated \cite{EspitiaPrieurTarbouriechGirard2020,EtienneDiGennaroBarbot2015,ZhangFeng2014}. These works typically leverage Lyapunov analysis to derive triggering conditions based on state estimation errors ensuring both stability and resource efficiency.\\ These strategies have been successfully extended to some class of infinite-dimensional systems governed by PDE. Backstepping-based boundary observers have proven effective in achieving exponential convergence for hyperbolic \cite{KrsticSmyshlyaev2008} and parabolic \cite{SmyshlyaevKrstic2005} equations using only boundary measurements. More recently, event-triggered observer designs have been developed for specific classes of PDE systems, including one-dimensional reaction-diffusion equations \cite{RathnayakeDiagneEspitiaKarafyllis2021} and 2×2 hyperbolic systems \cite{EspitiaPrieurTarbouriechGirard2020}, addressing both spatial dynamics and communication constraints. Despite these advances, the extension of observer-based event-triggered control to broader classes of PDE systems remains largely unexplored.\\
	
	The closest work is \cite{BaudouinErvedoza2025}. Consider the closed-loop system
	\begin{equation}
		\begin{cases}
			\dot{z}(t)=\Acal z(t)+Bu(t),&\quad t\geq 0,\\
			u(t)=Kz(t),&\quad t\geq 0,\\
			z(0)=z_0,&
		\end{cases}\label{abstract_PDE}
	\end{equation}
	where $z_0 \in \Hcal$, which is exponentially stable in $\Hcal$ by \cref{A:Kstab}.\\
	In \cite{BaudouinErvedoza2025}, the authors have established the exponential stability of control system \eqref{abstract_PDE} when the control is based on an observation operator $C$ and subjected to some aperiodic sample-and-hold mechanism. Specifically, they showed that the event-triggered control system
	\begin{equation}
		\begin{cases}
			\dot{z}(t)=\Acal z(t)+Bu(t),&\quad t\geq 0,\\
			u(t)=K'y(t_k),&\quad t\in[t_k,t_{k+1}),\\
			y(t)=Cz(t),&\quad t\geq 0,\\
			z(0)=z_0,&
		\end{cases}\label{system_trigger_PDE}
	\end{equation}
	with $K'\in\Lcal(\Ycal,\Ucal)$, subjected to the triggering mechanism given by
	\begin{equation}\label{law_trigger_PDE}
		t_{k+1} \coloneqq\sup\Big\{ t>t_k, \quad \lVert Cz(t) -  Cz(t_k)\rVert_{\mathcal{Y}} \leq\gamma  \lVert z(t)\rVert_{\mathcal{H}}  \Big\}, \quad \gamma > 0,
	\end{equation}
	is well-posed for $z_0\in \Dcal(\Acal)$ and exponentially stable in $\Hcal$ under \cref{A:gen} and the assumptions that $\Acal$ is skew-adjoint, $B$ and $C$ are bounded, $\Acal+BK'C$ generates an exponentially stable $C_0$-semigroup on $\Hcal$ and that the tuning parameter $\gamma$ is chosen small enough.\\
	We mention that the continuous system \eqref{abstract_PDE} and the event-triggered system \eqref{system_trigger_PDE}--\eqref{law_trigger_PDE} each have a unique solution which is classical if $z_0 \in \Dcal(\Acal)$.\\
	One should note that our system of interest \eqref{eq:system} is derived from \eqref{abstract_PDE} incorporating a Luenberger observer based on an observer operator $L$, and by implementing control updates at discrete event-triggered instants. Our upcoming analysis builds on the system \eqref{system_trigger_PDE}--\eqref{law_trigger_PDE}, which already follows an event-triggered framework based on observation, but without incorporating an observer.

	The closed loop \eqref{system_trigger_PDE} is the particular case of \eqref{eq:system} in which the observer is initialized at the true state ($e_0=0$, so that $\zhat=z$), the gain is $K=K'C$, and the monitored signal is $q=Cz$ ($Q=C$, $\HQ=\Ycal$, $D=BK'$). In that case $e\equiv0$, and \cref{A:Lstab} plays no role in our stability argument. Our rule then differs from \eqref{law_trigger_PDE} in two respects. Its relative part is $\gamma\norm{Cz(t)}_{\Ycal}$, which is available from the measurement, instead of $\gamma\normH{z(t)}$, and it contains a positive internal variable. With respect to \cite{BaudouinErvedoza2025}, we consider a dynamic output feedback through an observer, we drop the skew-adjointness of the generator and the regularity of the data, we exclude Zeno behavior without any explicit lower bound on the inter-event times, and we study dwell-times.
	
	\subsection{Organization of the paper}
	
	\Cref{sec:prelim} collects preliminary results, among which the increment identity \eqref{eq:incr-identity}, on which all dwell-time estimates rest, and the non-uniform continuity of $C_0$-semigroups (\cref{lem:nonunif}), which is the source of the obstructions. \Cref{sec:main-wp} proves well-posedness and the absence of Zeno behaviour (\cref{thm:main-wp}). \Cref{sec:stab} establishes exponential stability (\cref{thm:stab}). \Cref{sec:thresholds} describes admissible internal variables. \Cref{sec:dwell} is devoted to dwell-times (\cref{thm:main-dw}). \Cref{sec:examples} discusses examples. \Cref{sec:numerics} gives some numerical experiments. \cref{sec:conclusion} concludes. 
	
	\section{Preliminaries}\label{sec:prelim}
	
	In this preliminary section, we state some classical and intermediate results that will be used throughout the paper. 
	
	\paragraph{Mild solutions and elementary estimates.} Throughout, integrals of $\Hcal$-valued functions are Bochner integrals. For $T\in(0,\infty]$, $x\in\Hcal$ and $f\in L^1_{\mathrm{loc}}([0,T);\Hcal)$, the mild solution of $\dot v=\Acal v+f$, $v(0)=x$, on $[0,T)$ is given by the Duhamel formula
	\begin{equation}\label{eq:mild}
		v(t)=e^{t\Acal}x+\int_0^te^{(t-s)\Acal}f(s)\,ds,\qquad t\in[0,T),
	\end{equation}
	see \cite[Definition 4.2.3]{Pazy1983}. It is the unique continuous function satisfying \eqref{eq:mild}. We will use the continuity of the convolution term up to the end of a bounded interval, for a bounded forcing term. 
	
	In the stability analysis, the observer will be compared with the stable closed loop generated by $\AK$, while its equation involves $\Acal$. The following lemma allows to change the reference generator in a mild formulation.
	
	\begin{lemma}[Change of reference generator]\label{lem:change-gen}
		Let $P\in\Lcal(\Hcal)$, $S(t)\coloneqq e^{t\Acal}$ and $S_P(t)\coloneqq e^{t(\Acal+P)}$. Let $T\in(0,\infty]$, $x\in\Hcal$, $f\in L^1_{\mathrm{loc}}([0,T);\Hcal)$, and let $v$ be the mild solution \eqref{eq:mild} of $\dot v=\Acal v+f$, $v(0)=x$. Then $v$ is also the mild solution of $\dot v=(\Acal+P)v+(f-Pv)$, $v(0)=x$, that is,
		\[
		v(t)=S_P(t)x+\int_0^tS_P(t-s)\big(f(s)-Pv(s)\big)\,ds,\qquad t\in[0,T).
		\]
	\end{lemma}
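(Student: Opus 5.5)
The plan is to use the standard perturbation identity
\[
S_P(t)x=S(t)x+\int_0^tS_P(t-s)PS(s)x\,ds,\qquad x\in\Hcal,\ t\ge0,
\]
which is the defining fixed-point relation in the bounded perturbation theorem \cite[Theorem III.1.3]{EngelNagel2000}. Both sides are strongly continuous in $t$. The identity is checked first for $x\in\Dcal(\Acal)$ by differentiating $r\mapsto S_P(t-r)S(r)x$ on $[0,t]$: the derivative equals $S_P(t-r)\big(-(\Acal+P)+\Acal\big)S(r)x=-S_P(t-r)PS(r)x$. It then extends to all of $\Hcal$ by density, since both sides are bounded operators of $x$. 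The same computation with the roles exchanged gives the symmetric form
\[
S(t)x=S_P(t)x-\int_0^tS_P(t-s)PS(s)x\,ds.
\]

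Next, set
\[
w(t)\coloneqq S_P(t)x+\int_0^tS_P(t-s)\big(f(s)-Pv(s)\big)\,ds,
\]
and show $w=v$ on $[0,T)$. Substitute the mild formula \eqref{eq:mild} for $v$ into the term $-\int_0^tS_P(t-s)Pv(s)\,ds$. This gives
\[
-\int_0^tS_P(t-s)PS(s)x\,ds-\int_0^t\int_0^sS_P(t-s)PS(s-r)f(r)\,dr\,ds.
\]
The first piece combines with $S_P(t)x$, by the identity above, to give $S(t)x$.

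For the double integral, exchange the order of integration by Fubini for Bochner integrals. This is legitimate because $f\in L^1(0,t)$ and the operator families are uniformly bounded on $[0,t]$. The double integral becomes
\[
\int_0^t\Big(\int_r^tS_P(t-s)PS(s-r)\,ds\Big)f(r)\,dr.
\]
After the change of variables $s'=s-r$, the inner integral is $\int_0^{t-r}S_P(t-r-s')PS(s')\,ds'$, applied to $f(r)$. By the identity, this equals $\big(S_P(t-r)-S(t-r)\big)f(r)$. Hence
\[
w(t)=S(t)x+\int_0^tS_P(t-r)f(r)\,dr-\int_0^t\big(S_P(t-r)-S(t-r)\big)f(r)\,dr=S(t)x+\int_0^tS(t-r)f(r)\,dr=v(t).
\]

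The main obstacle is the measurability needed for Fubini. The map $(s,r)\mapsto S_P(t-s)PS(s-r)f(r)$ is only strongly measurable, since the semigroups are merely strongly continuous. I would handle this by approximating $f$ in $L^1(0,t)$ by simple functions. For a simple $f$ the integrand is piecewise continuous, so Fubini is elementary. Both sides of the final identity are continuous in $f$ for the $L^1$ norm, because the operator families are bounded on $[0,t]$, so the identity passes to the limit.

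Uniqueness is not needed here: the computation directly gives $w=v$. The continuity of $w$ follows from that of $v$.
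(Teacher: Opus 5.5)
Your proof is correct and is essentially the paper's argument: you apply the perturbation identity $S_P(t)y=S(t)y+\int_0^tS_P(t-r)PS(r)y\,dr$ to $y=x$ and to $y=f(s)$, then exchange the order of integration by Fubini, the only differences being that you derive the identity and justify measurability yourself where the paper simply cites Engel--Nagel, and that you run the computation from the $S_P$-formula back to $v$ rather than forward. (Minor: your ``symmetric form'' is just a rearrangement of the same identity rather than a role exchange, and it is not needed.)
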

	\begin{proof}
		The bounded perturbation formula \cite[Corollary III.1.7]{EngelNagel2000} gives
		\begin{equation*}
			S_P(t)y=S(t)y+\int_0^tS_P(t-r)PS(r)y\,dr\quad \text{for every } y\in\Hcal \quad \text{and } t\ge0.
		\end{equation*} 
		Applying it to $y=x$ and to $y=f(s)$, and using Fubini's theorem, we obtain
		\begin{align*}
			S_P(t)x+\int_0^t S_P(t-s)f(s)\,ds&=v(t)+\int_0^t S_P(t-r)P\Big(S(r)x+\int_0^r S(r-s)f(s)\,ds\Big)\,dr\\
			&=v(t)+\int_0^t S_P(t-r)Pv(r)\,dr.
		\end{align*}
		Rearranging yields the claim.
	\end{proof}
	
	We will also use several times the following elementary computation.
	
	\begin{lemma}[Convolution of exponentials]\label{lem:conv-exp}
		Let $\alpha>0$, $0\le\lambda<\alpha$ and $\nu\ge\lambda$. Then
		\[
		e^{\lambda t}\int_0^te^{-\alpha(t-s)}e^{-\nu s}\,ds\le\frac1{\alpha-\lambda}\quad \text{for every } t\ge0.
		\]
	\end{lemma}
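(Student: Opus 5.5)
The plan is to compute the integral explicitly and then bound it, using $\nu\ge\lambda$ to replace $e^{-\nu s}$ by $e^{-\lambda s}$ first. Since $s\ge0$ and $\nu\ge\lambda$, we have $e^{-\nu s}\le e^{-\lambda s}$ on $[0,t]$. Together with the positivity of the integrand, this gives
\[
e^{\lambda t}\int_0^te^{-\alpha(t-s)}e^{-\nu s}\,ds\le e^{\lambda t}\int_0^te^{-\alpha(t-s)}e^{-\lambda s}\,ds=\int_0^te^{-(\alpha-\lambda)(t-s)}\,ds .
\]
The last equality follows by writing $e^{\lambda t}e^{-\lambda s}=e^{\lambda(t-s)}$ and combining the exponentials.

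Next, we substitute $r=t-s$, so the right-hand side becomes $\int_0^te^{-(\alpha-\lambda)r}\,dr$. Because $\alpha-\lambda>0$, this integral is
\[
\int_0^te^{-(\alpha-\lambda)r}\,dr=\frac{1-e^{-(\alpha-\lambda)t}}{\alpha-\lambda}\le\frac1{\alpha-\lambda},
\]
uniformly in $t\ge0$, which is the claimed bound.

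There is no real obstacle here. The only points that need care are the direction of the inequality $e^{-\nu s}\le e^{-\lambda s}$, which requires $s\ge0$ and $\nu\ge\lambda$, and the strict inequality $\lambda<\alpha$, which makes the final integral bounded independently of $t$. The hypothesis $\lambda\ge0$ is not used in the argument; it only reflects how the lemma is applied, namely with a nonnegative decay rate in the stability estimates.
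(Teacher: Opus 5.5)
Your proof is correct and follows essentially the same route as the paper. The paper rewrites the integrand as $e^{-(\alpha-\lambda)(t-s)}e^{-(\nu-\lambda)s}$, drops the second factor using $\nu\ge\lambda$ (your bound $e^{-\nu s}\le e^{-\lambda s}$), and then bounds $\int_0^t e^{-(\alpha-\lambda)(t-s)}\,ds$ by $(\alpha-\lambda)^{-1}$; your remark that $\lambda\ge0$ is never used is also accurate.
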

	\begin{proof}
		The left-hand side equals $\int_0^te^{-(\alpha-\lambda)(t-s)}e^{-(\nu-\lambda)s}\,ds\le\int_0^te^{-(\alpha-\lambda)(t-s)}\,ds\le(\alpha-\lambda)^{-1}$.
	\end{proof}
	
	\paragraph{The estimation error.} We have the following intermediary result regarding the estimation error.
	
	\begin{lemma}[Error dynamics]\label{lem:error}
		Let \cref{A:gen,A:CL,A:Lstab} hold. For every $e_0\in\Hcal$, the error $e(t)=T_L(t)e_0$ is independent of the triggering instants and, for every $t,r\ge0$,
		\begin{gather}
			\normH{e(t)}\le M_Le^{-\alpha_Lt}\normH{e_0},\qquad
			\normH{LCe(t)}\le\norm{LC}M_Le^{-\alpha_Lt}\normH{e_0},\label{eq:err}\\
			\normH{e(t+r)}\le M_Le^{-\alpha_Lr}\normH{e(t)} .\notag
		\end{gather}
	\end{lemma}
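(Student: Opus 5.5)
The plan is to derive everything from the observer--error formulation \eqref{eq:obs-err}. Subtracting the two equations of \eqref{eq:system} makes the control terms $Bu(t)$ cancel, whatever the frozen value $K\zhat(t_k)$ is. Since $C\zhat-y=C\zhat-Cz=Ce$, we formally get $\dot e=\Acal e+LCe=\AL e$ with $e(0)=e_0$.

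To make this rigorous at the level of mild solutions, I will work on each inter-event interval $[t_k,t_{k+1})$. The forcing $BK\zhat(t_k)$ is bounded there, so both $z$ and $\zhat$ are given by the Duhamel formula \eqref{eq:mild} with generator $\Acal$. Subtracting, $e$ satisfies
\[
e(t)=e^{t\Acal}e_0+\int_0^t e^{(t-s)\Acal}LCe(s)\,ds,
\]
and this identity holds globally on $[0,T^*)$ because the contributions of the piecewise-constant control cancel exactly. Next I apply \cref{lem:change-gen} with $P=LC$ and $f=LCe$. Then $f-Pe=0$, so $e(t)=T_L(t)e_0$. This closed-form expression involves neither the events nor $\zhat$, which is the claimed independence.

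The estimates then follow directly from \cref{A:Lstab}. First, $\normH{e(t)}\le\norm{T_L(t)}\normH{e_0}\le M_Le^{-\alpha_Lt}\normH{e_0}$. Applying the bounded operator $LC$ (\cref{A:CL}) gives $\normH{LCe(t)}\le\norm{LC}M_Le^{-\alpha_Lt}\normH{e_0}$. Finally, the semigroup property gives $e(t+r)=T_L(r)T_L(t)e_0=T_L(r)e(t)$, and hence $\normH{e(t+r)}\le M_Le^{-\alpha_Lr}\normH{e(t)}$.

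There is no real obstacle here. The only delicate point is justifying that $e$ is the mild solution generated by $\AL$ when $\zhat$ and $z$ are defined only as mild solutions, and \cref{lem:change-gen} handles exactly that.
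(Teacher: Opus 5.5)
Your proof is correct and follows the paper's argument: identify $e(t)=T_L(t)e_0$ from the error equation, which does not involve the control, then read off the three bounds from \cref{A:Lstab}, the boundedness of $LC$, and the semigroup property $e(t+r)=T_L(r)e(t)$. The only addition is your mild-level justification of $e=T_L(\cdot)e_0$ via \cref{lem:change-gen} with $P=LC$, which the paper takes for granted here and carries out in the reverse direction in \cref{rem:back}.
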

	\begin{proof}
		The error equation in \eqref{eq:obs-err} does not involve the control, so $e(t)=T_L(t)e_0$. The first two bounds follow from \cref{A:Lstab} and $LC\in\Lcal(\Hcal)$. The third comes from the semigroup property $e(t+r)=T_L(r)e(t)$ together with \cref{A:Lstab}.
	\end{proof}
	
	\paragraph{The increment identity.} The time-shifted Duhamel formula \eqref{eq:mild}, together with
	the bound \eqref{eq:growth} yields the following identity and estimates which are proven in the appendix.
	
	\begin{lemma}[Increment identity]\label{lem:increment}
		Let \cref{A:gen,A:BK,A:CL} hold, let $s_0\ge0$, $v\in\Hcal$, and let $w$ be the mild solution of $\dot w=\Acal w+BKv+LCe$ on $[s_0,\infty)$ with $w(s_0)=v$, where $e(t)=T_L(t)e_0$. Then, for every $h\ge0$,
		\begin{equation}\label{eq:incr-identity}
			w(s_0+h)-v=\big(e^{h\Acal}-I\big)v+\int_0^he^{(h-r)\Acal}BKv\,dr+\int_0^he^{(h-r)\Acal}LCe(s_0+r)\,dr .
		\end{equation}
		Consequently, setting
		\begin{equation}\label{eq:def_Lambda}
			\Lambda(s_0,h)\coloneqq\int_{s_0}^{s_0+h}\normH{LCe(r)}\,dr,
		\end{equation}
		it holds
		\begin{equation}\label{eq:incr-bound}
			\normH{w(s_0+h)-v}\le\normH{(e^{h\Acal}-I)v}+hC_\Acal e^{\omega_\Acal^+h}\norm{BK}\normH{v}+C_\Acal e^{\omega_\Acal^+h}\Lambda(s_0,h).
		\end{equation}
		In terms of the monitored signal, for $h\in(0,1]$,
		\begin{equation}\label{eq:incr-Q}
			\normQ{Qw(s_0+h)-Qv}\le\normQ{Q(e^{h\Acal}-I)v}+hM_\Acal\norm{Q}\Big(\norm{D}\normQ{Qv}+\sup_{s\in[s_0,s_0+h]}\normH{LCe(s)}\Big).
		\end{equation}
	\end{lemma}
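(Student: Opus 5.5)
We start from the Duhamel formula \eqref{eq:mild} on $[s_0,\infty)$, shifted in time. Since $w(s_0)=v$ and the forcing term is $f(s)=BKv+LCe(s)$, the formula gives, for $h\ge0$,
\[
w(s_0+h)=e^{h\Acal}v+\int_{s_0}^{s_0+h}e^{(s_0+h-s)\Acal}\big(BKv+LCe(s)\big)\,ds .
\]
The forcing lies in $L^1_{\mathrm{loc}}$ because $e=T_L(\cdot)e_0$ is continuous and $LC$ is bounded. We then subtract $v$ on both sides, write $e^{h\Acal}v-v=(e^{h\Acal}-I)v$, and substitute $s=s_0+r$ in the integral. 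This yields \eqref{eq:incr-identity} directly, with no further argument.

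Next we prove \eqref{eq:incr-bound}. We apply the triangle inequality to the three terms of \eqref{eq:incr-identity} and keep the first term as it is. For $0\le h-r\le h$, \eqref{eq:growth} gives $\norm{e^{(h-r)\Acal}}\le C_\Acal e^{\omega_\Acal(h-r)}\le C_\Acal e^{\omega_\Acal^+h}$.
\begin{itemize}
\item The second term is therefore bounded by $hC_\Acal e^{\omega_\Acal^+h}\norm{BK}\normH{v}$.
\item The third term is bounded by $C_\Acal e^{\omega_\Acal^+h}\int_0^h\normH{LCe(s_0+r)}\,dr=C_\Acal e^{\omega_\Acal^+h}\Lambda(s_0,h)$.
\end{itemize}
Bochner integrals satisfy $\norm{\int g}\le\int\norm{g}$, which justifies moving the norm inside.

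For \eqref{eq:incr-Q} we apply $Q\in\Lcal(\Hcal,\HQ)$ to \eqref{eq:incr-identity}. Since $Q$ is bounded, it commutes with Bochner integration. In the second term we use the factorization \eqref{eq:factor}, $BKv=DQv$. For $h\in(0,1]$ and $0\le h-r\le1$, \eqref{eq:MA} gives $\norm{e^{(h-r)\Acal}}\le M_\Acal$. The second term is then bounded by
\[
\int_0^h\norm{Q}\,M_\Acal\,\norm{D}\normQ{Qv}\,dr=hM_\Acal\norm{Q}\norm{D}\normQ{Qv}.
\]
The third term is bounded by $\norm{Q}M_\Acal\int_0^h\normH{LCe(s_0+r)}\,dr$. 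We bound the integrand by its supremum over $[s_0,s_0+h]$, which is finite by continuity of $e$, so this term is at most $hM_\Acal\norm{Q}\sup_{s\in[s_0,s_0+h]}\normH{LCe(s)}$. The first term stays as $\normQ{Q(e^{h\Acal}-I)v}$, and collecting the three bounds gives \eqref{eq:incr-Q}.

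There is no real obstacle. The only points needing care are these:
\begin{itemize}
\item The time-shift of the mild solution is legitimate, because the equation is autonomous in $\Acal$ and the forcing is merely translated.
\item The bound on $\norm{e^{(h-r)\Acal}}$ must be uniform in $r$, which is why the growth constant is evaluated with $\omega_\Acal^+$.
\item The first term $(e^{h\Acal}-I)v$ has to be kept unestimated, since it is not $O(h)$ for unbounded $\Acal$.
\end{itemize}
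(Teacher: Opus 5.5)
Your proposal is correct and follows the paper's proof essentially line for line. You take the time-shifted Duhamel formula for the identity, and the triangle inequality with $\norm{e^{(h-r)\Acal}}\le C_\Acal e^{\omega_\Acal^+h}$ for the first bound. For the monitored-signal bound you apply $Q$, use the factorization $BK=DQ$, and use $\norm{e^{(h-r)\Acal}}\le M_\Acal$ for $h\le1$.
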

	\begin{proof}
		\eqref{eq:incr-identity} is the mild formula on $[s_0,s_0+h]$. \eqref{eq:incr-bound} follows from the triangle inequality and $\norm{e^{(h-r)\Acal}}\le C_\Acal e^{\omega_\Acal^+h}$ for $r\in[0,h]$. 
		\eqref{eq:incr-Q} follows by applying $Q$, using $BKv=DQv$ by \eqref{eq:factor} and $\norm{e^{(h-r)\Acal}}\le M_\Acal$ for $h\le1$.
\end{proof}

As an easy consequence of the Taylor expansion of the exponential and of the triangular inequality for norms, we have the following useful property.
\begin{lemma}[Increments of a bounded generator]\label{lem:bounded-incr}
	Let $X$ be a Banach space and $G\in\Lcal(X)$. Then, it holds
	\begin{equation}\label{eq:inc}
		\norm{(e^{hG}-I)x}\le h\norm{G}e^{h\norm{G}}\norm{x} \quad \text{for every } h\ge0 \text{ and }x\in X.
	\end{equation}
\end{lemma}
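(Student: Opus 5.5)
The plan is to write $e^{hG}-I$ as a convergent power series in $\Lcal(X)$ and bound it term by term. Since $G$ is bounded, the series $e^{hG}=\sum_{n\ge0}\frac{h^nG^n}{n!}$ converges absolutely in operator norm. Subtracting the $n=0$ term gives
\[
e^{hG}-I=\sum_{n\ge1}\frac{h^nG^n}{n!}=hG\sum_{n\ge0}\frac{h^nG^n}{(n+1)!}.
\]

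Next I will use the triangle inequality together with submultiplicativity, $\norm{G^n}\le\norm{G}^n$. Since $(n+1)!\ge n!$, this yields
\[
\norm{e^{hG}-I}\le h\norm{G}\sum_{n\ge0}\frac{(h\norm{G})^n}{(n+1)!}\le h\norm{G}\sum_{n\ge0}\frac{(h\norm{G})^n}{n!}=h\norm{G}e^{h\norm{G}}.
\]
Applying this operator bound to an arbitrary $x\in X$ gives \eqref{eq:inc}.

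An alternative route, avoiding series manipulations, is the integral identity $(e^{hG}-I)x=\int_0^h Ge^{sG}x\,ds$, together with the bound $\norm{e^{sG}}\le e^{s\norm{G}}\le e^{h\norm{G}}$ for $s\in[0,h]$. This also gives the claim directly.

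I expect no real obstacle here. The only point needing care is justifying the term-by-term estimate, and this follows from absolute convergence of the exponential series in the Banach algebra $\Lcal(X)$.
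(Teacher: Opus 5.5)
Your proof is correct and follows the paper's approach: the paper also gets the bound from the Taylor expansion of $e^{hG}$ together with the triangle inequality, which is exactly your series argument with $\norm{G^n}\le\norm{G}^n$ and $(n+1)!\ge n!$. Your alternative via $(e^{hG}-I)x=\int_0^h Ge^{sG}x\,ds$ is also valid.
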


All dwell-time estimates of this work rest on the  identity \eqref{eq:inc}.

\paragraph{Non-uniform continuity of $C_0$-semigroups.}
The following result is the quantitative form of the classical fact that a $C_0$-semigroup is continuous at $0$ in operator norm if and only if its generator is bounded (see \cite[Remark 3.1.4]{Staffans2005}).
\begin{lemma}\label{lem:nonunif}
	Let \cref{A:gen} hold and assume that 
	$\Acal$ is unbounded. Then for every $\delta>0$ and $\varepsilon\in(0,1)$ there exist $h\in(0,\delta]$ and $z\in\Hcal$ with $\normH{z}=1$ such that
	\begin{equation*}
		\normH{(e^{h\Acal}-I)z}\ge1-\varepsilon.
	\end{equation*}
\end{lemma}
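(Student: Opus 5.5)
We argue by contradiction, using the classical fact quoted before the statement: if $\lim_{h\to0^+}\norm{e^{h\Acal}-I}_{\Lcal(\Hcal)}=0$, then $\Acal$ is bounded. So suppose the conclusion fails. Then there are $\delta>0$ and $\varepsilon\in(0,1)$ such that $\normH{(e^{h\Acal}-I)z}<1-\varepsilon$ for every $h\in(0,\delta]$ and every unit vector $z$. By homogeneity this gives
\[
\sup_{0<h\le\delta}\norm{e^{h\Acal}-I}_{\Lcal(\Hcal)}\le1-\varepsilon<1 .
\]
The aim is to upgrade this uniform smallness into norm continuity at $0$ and then into boundedness of $\Acal$.

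First, fix $h_0\in(0,\delta]$ and set $R\coloneqq e^{h_0\Acal}$. Since $\norm{R-I}<1$, the Neumann series shows that $R$ is invertible in $\Lcal(\Hcal)$ and gives a bounded logarithm $G\coloneqq\log R=-\sum_{n\ge1}\frac{(I-R)^n}{n}\in\Lcal(\Hcal)$, with $e^{G}=R$. Moreover, $G$ is a norm-convergent power series in $R$, so it commutes with the whole semigroup. Next, replace $h_0$ by $h_0/2^j$ for $j\ge0$. Each $e^{h_0 2^{-j}\Acal}$ is still within $1-\varepsilon$ of $I$, so it has a logarithm $G_j$. The plan is to show, by uniqueness of the principal logarithm in the ball $\norm{X-I}<1$, that $G_j=2^{-j}G$.

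This uniqueness step is the main obstacle, because $(e^{G_{j+1}})^2=e^{2G_{j+1}}$ only yields $2G_{j+1}=G_j$ if $2G_{j+1}$ lies in the domain where the logarithm is injective. To avoid this issue, I would use a more robust spectral/resolvent route instead of chasing logarithms. Take $R=e^{h_0\Acal}$ with $\norm{R-I}<1$. Then the operator $\int_0^{h_0}e^{s\Acal}\,ds$ satisfies $\Acal\int_0^{h_0}e^{s\Acal}\,ds=R-I$ on $\Hcal$. The other ingredient is that the averaged operator
\[
V\coloneqq\frac1{h_0}\int_0^{h_0}e^{s\Acal}\,ds
\]
is invertible. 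Indeed, if $\norm{e^{s\Acal}-I}\le1-\varepsilon$ for all $s\in(0,h_0]$, then $\norm{V-I}\le1-\varepsilon<1$, so $V$ is invertible. Since $V$ maps $\Hcal$ into $\Dcal(\Acal)$ and $\Acal V=\frac1{h_0}(R-I)$, we get that $\Dcal(\Acal)\supseteq V\Hcal=\Hcal$. Hence $\Acal=\frac1{h_0}(R-I)V^{-1}$ is bounded. This contradicts the hypothesis and proves the lemma, with no logarithm needed. I would present this averaged-operator argument as the actual proof.

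The only care required is to justify the two standard semigroup facts used. The first is $\int_0^{h}e^{s\Acal}x\,ds\in\Dcal(\Acal)$ with $\Acal\int_0^{h}e^{s\Acal}x\,ds=e^{h\Acal}x-x$ (Engel--Nagel, Lemma II.1.3). The second is the triangle inequality for Bochner integrals, giving $\norm{V-I}\le\sup_{0<s\le h_0}\norm{e^{s\Acal}-I}$. Here $h_0=\delta$ works, since the negated statement controls every $s\in(0,\delta]$.
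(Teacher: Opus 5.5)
Your averaged-operator argument is correct. Negating the conclusion gives $\sup_{0<h\le\delta}\norm{e^{h\Acal}-I}\le1-\varepsilon<1$. Then $V=\delta^{-1}\int_0^\delta e^{s\Acal}\,ds$ is invertible by a Neumann series and maps $\Hcal$ into $\Dcal(\Acal)$, so $\Acal=\delta^{-1}(e^{\delta\Acal}-I)V^{-1}\in\Lcal(\Hcal)$, contradicting unboundedness.

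The paper's own proof is not shown in the source; the lemma is only presented as the quantitative form of the classical fact that a $C_0$-semigroup is norm-continuous at $0$ iff its generator is bounded. Your argument is the standard proof of that fact (Pazy's), so it most likely matches what the paper intends.

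The logarithm paragraph is an unnecessary detour and should be dropped from the final write-up.
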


\section{Well-posedness and exclusion of Zeno behavior}\label{sec:main-wp}

In this section, \cref{A:gen,A:BK,A:CL} hold, $\gamma\ge0$, and $m$ satisfies \eqref{H1}--\eqref{H2}. We make precise the notion of solution of the event-triggered system \eqref{eq:obs-err}--\eqref{eq:threshold} announced in \cref{sec:intro-rule}, and we prove \cref{thm:main-wp}.

\subsection{Frozen continuations}

Let $e(t)=T_L(t)e_0$. For $s_0\ge0$ and $v\in\Hcal$, the frozen continuation from $(s_0,v)$ is the mild solution of $\dot w=\Acal w+BKv+LCe$ on $[s_0,\infty)$ with $w(s_0)=v$, namely,
\begin{equation}\label{eq:frozen}
	W_{s_0,v}(t)\coloneqq e^{(t-s_0)\Acal}v+\int_{s_0}^te^{(t-r)\Acal}\big(BKv+LCe(r)\big)\,dr,\qquad t\ge s_0 .
\end{equation}
Since $r\mapsto BKv+LCe(r)$ is continuous, hence bounded on compact intervals.
The continuity of mild convolutions 
shows that $W_{s_0,v}\in C([s_0,\infty);\Hcal)$. It is the trajectory of the observer that one obtains if an event occurs at time $s_0$ with sampled state $v$, and no event occurs after $s_0$.

\subsection{Threshold and notion of solution}

In \eqref{eq:rule}, the value of the monitored signal after $t_k$ is not known before the next event has been determined, since it depends on the control applied on $[t_k,t_{k+1})$. The threshold is therefore evaluated on the frozen continuation. The threshold attached to an event at time $s_0$ with sampled observer state $v$ is
\begin{equation}\label{eq:ruleA}
	\Theta_{s_0,v}(t)\coloneqq\gamma\normQ{QW_{s_0,v}(t)}+m(t),\qquad t\ge s_0 .
\end{equation}
By \eqref{H1} and the continuity of $W_{s_0,v}$, $\Theta_{s_0,v}\in C([s_0,\infty);\R_+)$, and by \eqref{H2}, $\Theta_{s_0,v}(t)\ge m(t)>0$ for every $t\ge s_0$. The case $\gamma=0$ gives $\Theta_{s_0,v}=m$.

\begin{definition}[Solution]\label{def:solution}
	Let $(\zhat_0,e_0)\in\Hcal\times\Hcal$. A solution of the event-triggered system \eqref{eq:obs-err}--\eqref{eq:threshold} is a pair $\big((t_k),\zhat\big)$, where $(t_k)$ is an event sequence (strictly increasing, $t_0=0$, finite or infinite) with associated $T^*$, and $\zhat\in C([0,T^*);\Hcal)$ with $\zhat(0)=\zhat_0$, such that for every admissible index $k$,
	\begin{align}
		&\zhat(t)=W_{t_k,\zhat(t_k)}(t) \quad \text{for every } t\in[t_k,t_{k+1}), \label{S1} \tag{S1}\\
		&t_{k+1}=\sup\big\{t>t_k,\quad \normQ{Q\big(W_{t_k,\zhat(t_k)}(\tau)-\zhat(t_k)\big)}\le\Theta_{t_k,\zhat(t_k)}(\tau)\ \text{for all }\tau\in[t_k,t)\big\} \label{S2} \tag{S2}.
	\end{align}
	The error is $e(t)=T_L(t)e_0$, the state is $z\coloneqq\zhat-e$, and we write
	\begin{equation*}
		q\coloneqq Q\zhat, \quad \text{and }\Theta(t)\coloneqq\Theta_{t_{k(t)},\zhat(t_{k(t)})}(t), \quad \text{on }[0,T^*).
	\end{equation*}
\end{definition}

\begin{remark}[On the notation]\label{rem:solution}
	\begin{itemize}[leftmargin=*]
		\item Along a solution, \eqref{S1} gives
		\[
		QW_{t_k,\zhat(t_k)}(\tau)=q(\tau) \quad \text{for }\tau\in[t_k,t_{k+1}),
		\]
		so that $\Theta(\tau)=\gamma\normQ{q(\tau)}+m(\tau)$, and \eqref{S2} is exactly the rule \eqref{eq:rule}--\eqref{eq:threshold}. The two notations play different roles.
		The function $\Theta$ of \eqref{eq:threshold} is a function of time, but it is only defined once the solution is known, and it depends on the state through $q$. The family $\Theta_{s_0,v}$ is defined before the solution is constructed, and it makes explicit that the threshold used on $[t_k,t_{k+1})$ is determined by the event $(t_k,\zhat(t_k))$, through the frozen continuation (and by $e_0$, through the output residual).
		\item
		Strictly speaking, the full state of the event-triggered system \eqref{eq:obs-err}--\eqref{eq:rule} is the pair $(\zhat,e)$, and a complete description of a solution should involve both the event sequence $(t_k)$ and the pair $(\zhat,e)$, for instance $((t_k),\zhat,e)$ or $((t_k),Z)$ with $Z=(\zhat,e)$.
		However, since the error dynamics $e(t)=T_L(t)e_0$ is not affected by the triggering rule nor the evolution of $\zhat$ between events, it is convenient to simplify the notation and refer to a ``solution'' simply as the pair $((t_k),\zhat)$, with $e$ understood to be given implicitly by $e(t)=T_L(t)e_0$.
		
		One should keep in mind that the triggering instants are part of the solution. The object which is constructed, and which depends on the initial datum, is the pair $((t_k),\zhat)$.
	\end{itemize}
\end{remark}

\begin{remark}[Back to the state--observer system]\label{rem:back}
	Let $((t_k),\zhat)$ be a solution. Applying \cref{lem:change-gen} to the generator $\AL$ and the perturbation $-LC$, $e=T_L(\cdot)e_0$ is also the mild solution of $\dot e=\Acal e+LCe$. Subtracting from \eqref{S1}, $z=\zhat-e$ satisfies
	\begin{equation*}
		z(t)=e^{(t-t_k)\Acal}z(t_k)+\int_{t_k}^te^{(t-r)\Acal}BK\zhat(t_k)\,dr, \quad \text{on } [t_k,t_{k+1}),
	\end{equation*}
	i.e. $z$ is the mild solution of the state equation with $u=K\zhat(t_k)$, and $\zhat$ is the mild solution of the observer equation in \eqref{eq:system} since $LCe=L(C\zhat-y)$. Equivalently, $Z=(\zhat,e)$ is the mild solution of $\dot Z=\Acal_{\mathrm{fr}}Z+(BK\zhat(t_k),0)$ on each $[t_k,t_{k+1})$, where $\Acal_{\mathrm{fr}}\coloneqq\begin{psmallmatrix}\Acal&LC\\0&\AL\end{psmallmatrix}$ generates a $C_0$-semigroup on $\Hcal\times\Hcal$ by the bounded perturbation theorem.
\end{remark}

\subsection{Existence and uniqueness
}\label{sec:wp}

The result below gathers the technical tools for the proof of the well-posedness of system \eqref{eq:obs-err}--\eqref{eq:threshold}.

\begin{proposition}[Technical construction and properties of the solution]\label{prop:wp-tech}
	Let \cref{A:gen,A:BK,A:CL} hold, let $\gamma\ge0$ and let $m$ satisfy \eqref{H1}--\eqref{H2}. Fix $(\zhat_0,e_0)\in\Hcal\times\Hcal$ and, for $(s_0,v)\in\R_+\times\Hcal$, set
	\[
	W_{s_0,v}(\tau)\coloneqq e^{(\tau-s_0)\Acal}v+\int_{s_0}^{\tau}e^{(\tau-s)\Acal}\big(BKv+LCe(s)\big)\,ds,\qquad \tau\ge s_0.
	\]
	Then the following holds.
	\begin{enumerate}[label=\textup{(\roman*)}]
		\item \textup{(Well-defined inter-event map.)} For every $(s_0,v)\in\R_+\times\Hcal$, the time
		\begin{equation*}
			\sigma(s_0,v)\coloneqq\sup \left\{t>s_0, \quad \normQ{Q\big(W_{s_0,v}(\tau)-v\big)}\le\Theta_{s_0,v}(\tau), \quad \forall \tau\in[s_0,t] \right\}
		\end{equation*}
		is well defined in $(s_0,\infty]$, and, if finite, it satisfies the contact identity
		\begin{equation}\label{eq:contact-step}
			\normQ{Q\big(W_{s_0,v}(\sigma)-v\big)}=\Theta_{s_0,v}(\sigma)\ge m(\sigma)>0,
			\qquad \sigma\coloneqq\sigma(s_0,v).
		\end{equation}
		\item \textup{(Recursive construction and uniqueness.)} The sequence $(t_k)$ such that
		\begin{equation*}
			t_0\coloneqq0, \quad t_{k+1}\coloneqq\sigma(t_k,\zhat(t_k)),
		\end{equation*} 
		together with
		\begin{equation*}
			\zhat(t)\coloneqq W_{t_k,\zhat(t_k)}(t) \quad \text{for all } t\in[t_k,t_{k+1}),
		\end{equation*}
		defines a unique solution $((t_k),\zhat)$ of \eqref{eq:rule}--\eqref{eq:threshold} in the sense of \cref{def:solution}, with
		\begin{equation*}
			\zhat\in C([0,T^*);\Hcal).
		\end{equation*}
		Moreover, for every admissible $k$ with $t_{k+1}<\infty$, the contact identity holds
		\begin{equation}\label{eq:contact}
			\normQ{q(t_{k+1})-q(t_k)}=\Theta_{t_k,\zhat(t_k)}(t_{k+1})\ \ge\ m(t_{k+1})\ >0 .
		\end{equation}
		\item \textup{(Global representation.)} For every $t\in[0,T^*)$,
		\begin{equation}\label{eq:global-A}
			\zhat(t)=e^{t\Acal}\zhat_0+\int_0^te^{(t-s)\Acal}\big(BK\zhat(t_{k(s)})+LCe(s)\big)\,ds .
		\end{equation}
		\item \textup{(Limit at a finite accumulation time.)} If $T^*<\infty$, then $\zhat(t)$ converges strongly in $\Hcal$ as $t\to T^{*-}$.
	\end{enumerate}
\end{proposition}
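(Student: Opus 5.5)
We argue part by part, relying only on the continuity of the frozen continuation $W_{s_0,v}\in C([s_0,\infty);\Hcal)$, the continuity and positivity of $m$ in \eqref{H1}--\eqref{H2}, and the Duhamel formula \eqref{eq:mild}.

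For (i), fix $(s_0,v)$ and set $g(\tau)\coloneqq\normQ{Q(W_{s_0,v}(\tau)-v)}-\Theta_{s_0,v}(\tau)$. This function is continuous on $[s_0,\infty)$ because $Q$ is bounded and $W_{s_0,v}$, $m$ are continuous. At $\tau=s_0$ we have $W_{s_0,v}(s_0)=v$, hence $g(s_0)=-\Theta_{s_0,v}(s_0)\le-m(s_0)<0$. By continuity, $g<0$ on some interval $[s_0,s_0+\delta]$, so the set defining $\sigma$ is nonempty and $\sigma(s_0,v)\in(s_0,\infty]$. The set is an interval of the form $(s_0,\sigma)$ or $(s_0,\sigma]$. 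If $\sigma<\infty$, then $g\le0$ on $[s_0,\sigma)$, and by continuity $g(\sigma)\le0$. If we had $g(\sigma)<0$, continuity would give $g\le 0$ on $[\sigma,\sigma+\delta']$ for some $\delta'>0$, contradicting the supremum. Therefore $g(\sigma)=0$, which is \eqref{eq:contact-step}, and the lower bound $\Theta_{s_0,v}\ge m>0$ was already noted. We also observe that \eqref{S2} uses half-open sets $[t_k,t)$, whereas $\sigma$ uses closed ones. The two suprema coincide: the closed condition on $[s_0,t]$ implies the open one on $[s_0,t)$, and conversely the open condition for all $t<t'$ gives the closed one on $[s_0,t]$ for every $t<t'$.

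For (ii), we proceed by induction. Given $t_k<\infty$ and $\zhat(t_k)$, part (i) yields $t_{k+1}>t_k$. We define $\zhat$ on $[t_k,t_{k+1})$ by $W_{t_k,\zhat(t_k)}$. If $t_{k+1}<\infty$, we set $\zhat(t_{k+1})\coloneqq W_{t_k,\zhat(t_k)}(t_{k+1})$, which is the left limit, so $\zhat$ is continuous across events and $\zhat\in C([0,T^*);\Hcal)$. Properties \eqref{S1} and \eqref{S2} hold by construction, and \eqref{eq:contact} is \eqref{eq:contact-step} read through \cref{rem:solution}. For uniqueness, take two solutions and show by induction that $t_k=t_k'$ and $\zhat=\zhat'$ on $[t_k,t_{k+1})$. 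If $t_k=t_k'$ and $\zhat(t_k)=\zhat'(t_k)$, then \eqref{S2} forces $t_{k+1}=\sigma(t_k,\zhat(t_k))=t_{k+1}'$. Then \eqref{S1} gives equality on the interval, and continuity of both trajectories gives equality at $t_{k+1}$. Finally, $T^*$ is the same for both solutions since the sequences coincide.

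For (iii), we again argue by induction on $k$. Suppose \eqref{eq:global-A} holds at $t_k$. For $t\in[t_k,t_{k+1})$, write $e^{(t-t_k)\Acal}\zhat(t_k)$, substitute the formula at $t_k$, and use the semigroup property $e^{(t-t_k)\Acal}e^{(t_k-s)\Acal}=e^{(t-s)\Acal}$, pulling the bounded operator inside the Bochner integral. The integral over $[0,t_k]$ then combines with the integral in \eqref{eq:frozen} over $[t_k,t]$, on which $k(s)=k$. This gives \eqref{eq:global-A} on $[t_k,t_{k+1})$, and by continuity of both sides it also holds at $t_{k+1}$.

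For (iv), which is the main obstacle, we use \eqref{eq:global-A} on $[0,T^*)$. The first term $e^{t\Acal}\zhat_0$ has a strong limit by strong continuity of the semigroup. For the integral term, we need the forcing $f(s)\coloneqq BK\zhat(t_{k(s)})+LCe(s)$ to be bounded on $[0,T^*)$. This follows from a Grönwall argument: from \eqref{eq:global-A},
\[
\normH{\zhat(t)}\le C_\Acal e^{\omega_\Acal^+T^*}\Big(\normH{\zhat_0}+T^*\sup_{[0,T^*]}\normH{LCe}+\norm{BK}\int_0^t\sup_{r\le s}\normH{\zhat(r)}\,ds\Big).
\]
Applying Grönwall to $\phi(t)\coloneqq\sup_{r\le t}\normH{\zhat(r)}$ gives $\sup_{[0,T^*)}\normH{\zhat}<\infty$. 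Once $f$ is bounded and measurable (it is piecewise constant plus continuous, with countably many pieces), we extend it by $0$ beyond $T^*$. The convolution $t\mapsto\int_0^te^{(t-s)\Acal}f(s)\,ds$ is then continuous on $[0,T^*]$ by the continuity of mild convolutions for $L^1$ forcing. Hence $\zhat(t)$ converges strongly as $t\to T^{*-}$.
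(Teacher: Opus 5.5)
Your proposal is correct and follows the paper's proof essentially step by step: the continuity argument for the contact identity, the recursive construction with continuity at events via left limits, induction for uniqueness and for the Duhamel representation via the semigroup property, and a Gr\"onwall bound on the running supremum of $\normH{\zhat}$, which gives a bounded forcing and hence continuity of the convolution up to $T^*$. Your explicit check that the closed-interval definition of $\sigma$ and the half-open condition in \eqref{S2} give the same supremum is a point of care that the paper leaves implicit.
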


\begin{proof}
	The proof proceeds in several steps.\\
	\textit{Step 1: one step of the construction.} Fix $(s_0,v)\in\R_+\times\Hcal$ and set
	\[
	\Phi(\tau)\coloneqq\Theta_{s_0,v}(\tau)-\normQ{Q\big(W_{s_0,v}(\tau)-v\big)},\qquad\tau\ge s_0 .
	\]
	$\Phi$ is continuous on $[s_0,\infty)$ (by \eqref{H1} and the continuity of $W_{s_0,v}$), and using \eqref{H2} we have 
	\begin{equation*}
		\Phi(s_0)=\Gamma_{s_0,v}(s_0)+m(s_0)\ge m(s_0)>0,
	\end{equation*}
	where $\Gamma_{s_0,v}(t)\coloneqq\gamma\normQ{QW_{s_0,v}(t)}$.\\
	Let $E\coloneqq\{t>s_0,\quad \Phi\ge0\text{ on }[s_0,t)\}$.\\
	By continuity, $\Phi>0$ on $[s_0,s_0+\delta]$ for some $\delta>0$, so $E\ne\emptyset$, and we set $\sigma(s_0,v)\coloneqq\sup E\in(s_0,\infty]$.\\
	If $t\in E$ and $s_0<t'<t$, then $t'\in E$. Hence $\Phi\ge0$ on $[s_0,\sigma(s_0,v))$.\\ If $\sigma\coloneqq\sigma(s_0,v)<\infty$, then $\Phi(\sigma)\ge0$ by continuity, and $\Phi(\sigma)>0$ is impossible, since continuity would give $\Phi>0$ on $[\sigma,\sigma+\delta')$ for some $\delta'>0$ and thus $\sigma+\delta'\in E$.\\ 
	Hence \eqref{eq:contact-step} holds, which proves \textup{(i)}.
	
	\textit{Step 2: recursive construction.} Set $t_0\coloneqq0$, $\zhat(0)\coloneqq\zhat_0$.\\
	Assume that $t_0<\dots<t_k<\infty$ and $\zhat$ on $[0,t_k]$ have been constructed. Set
	\begin{equation*}
		t_{k+1}\coloneqq\sigma(t_k,\zhat(t_k))\in(t_k,\infty] \quad \text{and }\zhat(t)\coloneqq W_{t_k,\zhat(t_k)}(t)\quad \text{for }t\in[t_k,t_{k+1}).
	\end{equation*}
	If $t_{k+1}=\infty$ the construction stops (finite event sequence).\\
	Otherwise, set $\zhat(t_{k+1})\coloneqq W_{t_k,\zhat(t_k)}(t_{k+1})$ and continue.\\
	This produces an event sequence and a function $\zhat$ on $[0,T^*)$.\\
	On each $[t_k,t_{k+1})$, $\zhat$ coincides with the continuous function $W_{t_k,\zhat(t_k)}$, and at each finite $t_{k+1}$ the value $\zhat(t_{k+1})$ is the left limit of $\zhat$. Hence $\zhat\in C([0,T^*);\Hcal)$.\\
	Properties \eqref{S1}--\eqref{S2} hold by construction.\\
	Moreover, since $q(t_{k+1})=QW_{t_k,\zhat(t_k)}(t_{k+1})$, we deduce that \eqref{eq:contact} is \eqref{eq:contact-step} with $(s_0,v)=(t_k,\zhat(t_k))$.
	
	\textit{Step 3: uniqueness.} Let $((t'_k),\zhat')$ be another solution.\\
	We show by induction that, for every $k$, either both sequences have stopped before $k$, or $t'_k=t_k<\infty$, $\zhat'(t_k)=\zhat(t_k)$ and $\zhat'=\zhat$ on $[0,t_k]$.\\
	This holds for $k=0$. If it holds for some $k\geqslant 0$, then \eqref{S2} for both solutions gives $t'_{k+1}=\sigma(t_k,\zhat(t_k))=t_{k+1}$, and \eqref{S1} gives $\zhat'=\zhat=W_{t_k,\zhat(t_k)}$ on $[t_k,t_{k+1})$.\\
	If $t_{k+1}<\infty$, continuity gives $\zhat'(t_{k+1})=\zhat(t_{k+1})$. Hence the two solutions coincide, which completes the proof of \textup{(ii)}.
	
	\textit{Step 4: global representation.} We prove \eqref{eq:global-A} on $[0,t_{k+1})\cap[0,\infty)$ by induction on $k$, together with its validity at $t_{k+1}$ when $t_{k+1}<\infty$. For $t\in[t_0,t_1)$ it is \eqref{S1} with $k=0$, and at $t_1<\infty$ it holds by the definition of $\zhat(t_1)$.\\
	If it holds at $t_k$, then for $t\in[t_k,t_{k+1})$, and at $t_{k+1}$ if finite, we have
	\[
	\zhat(t)=e^{(t-t_k)\Acal}\Big[e^{t_k\Acal}\zhat_0+\int_0^{t_k}e^{(t_k-s)\Acal}g(s)\,ds\Big]+\int_{t_k}^te^{(t-s)\Acal}g(s)\,ds,\qquad g(s)\coloneqq BK\zhat(t_{k(s)})+LCe(s),
	\]
	which is \eqref{eq:global-A} by the semigroup property. The function $g$ is piecewise continuous with finitely many pieces on each $[0,t]$, $t<T^*$, hence strongly measurable and locally integrable on $[0,T^*)$. This proves \textup{(iii)}.
	
	\textit{Step 5: limit at a finite $T^*$.} Assume $T^*<\infty$, and let
	\begin{equation*}
		C_*\coloneqq\sup_{0\le s\le T^*}\norm{e^{s\Acal}}, \qquad E_*\coloneqq\sup_{0\le s\le T^*}\norm{T_L(s)}.
	\end{equation*}
	For $t<T^*$ let $Y(t)\coloneqq\sup_{0\le r\le t}\normH{\zhat(r)}$, which is finite and non-decreasing.\\
	Since $t_{k(s)}\le s$, it holds
	\begin{equation*}
		\normH{g(s)}\le\norm{BK}Y(s)+\norm{LC}E_*\normH{e_0}.
	\end{equation*}
	By \eqref{eq:global-A}, for $0\le r\le t<T^*$, we deduce
	\[
	\normH{\zhat(r)}\le C_*\normH{\zhat_0}+C_*T^*\norm{LC}E_*\normH{e_0}+C_*\norm{BK}\int_0^tY(s)\,ds,
	\]
	hence the inequality
	\begin{equation*}
		Y(t)\le a+b\int_0^tY(s)\,ds, \quad a\coloneqq C_*(\normH{\zhat_0}+T^*\norm{LC}E_*\normH{e_0}), \quad b\coloneqq C_*\norm{BK}.
	\end{equation*}
	By Gronwall's Lemma, $Y(t)\le ae^{bT^*}$ for every $t<T^*$.\\
	Thus $g\in L^\infty(0,T^*;\Hcal)$, hence, it is well-known that the right-hand side of \eqref{eq:global-A}, which is defined for $t\in[0,T^*]$, is continuous on $[0,T^*]$. Hence $\zhat(t)\to e^{T^*\Acal}\zhat_0+\int_0^{T^*}e^{(T^*-s)\Acal}g(s)\,ds$ as $t\to T^{*-}$, which proves \textup{(iv)}.
\end{proof}

\begin{proof}[Proof of \cref{thm:main-wp}, 	(i)]
	This is precisely the content of \cref{prop:wp-tech} \textup{(ii)}. Indeed, the recursive construction of Step 2 yields a solution satisfying \eqref{S1}--\eqref{S2}, and by Step 3 this solution is unique. The regularity of $\zhat$ follows from Step 2 as well.
\end{proof}

\begin{remark}[Other thresholds]\label{rem:general-threshold}
	The proof of \cref{thm:main-wp}, only uses the following two properties of the relative part 
	$\Gamma_{s_0,v}(t)=\gamma\normQ{QW_{s_0,v}(t)}$,
	\begin{itemize}[leftmargin=*]
		\item  for every $(s_0,v)\in\R_+\times\Hcal$, the function
		$\Gamma_{s_0,v}:[s_0,\infty)\to\R_+$ is prescribed independently
		of future triggering instants, it may depend on the fixed error
		trajectory $e(t)=T_L(t)e_0$,
		\item and $\Gamma_{s_0,v} \in C([s_0,\infty);\R_+)$.
	\end{itemize}
	Both results therefore remain valid if $\gamma\normQ{QW_{s_0,v}}$ is replaced by any family $(\Gamma_{s_0,v})$ with these properties, for instance a family depending on the sampled state, such as $\Gamma_{s_0,v}(t)=\gamma e^{-\hat\mu(t-s_0)}\normH{v}$.
	The positivity of the threshold at each event, and hence the separation of events, is ensured by $m$ alone. The other results of this paper use more specific properties of the relative part, namely an upper bound of the threshold by the size of the trajectory for stability (see the proof of \cref{cor:rate}), and the lower bound $\Theta\ge\gamma\normQ{q}$ for dwell-times (\cref{lem:size}). This is why we restrict ourselves to the threshold \eqref{eq:threshold}.
\end{remark}

\subsection{Exclusion of Zeno behavior
}\label{sec:nozeno}

\begin{proof}[Proof of \cref{thm:main-wp}]
	(ii) Fix an arbitrary initial datum $(\zhat_0,e_0)\in\Hcal\times\Hcal$ and let
	$((t_k),\zhat)$   
	be the corresponding maximal solution given by \cref{prop:wp-tech}, defined on $[0,T^*)$.
	Assume by contradiction that $T^*<\infty$. Then the event sequence is infinite and $t_k\uparrow T^*$. By \cref{prop:wp-tech}(iv), $\zhat(t)$ has a strong limit as $t\to T^{*-}$. In particular it holds
	\begin{equation*}
		\normQ{q(t_{k+1})-q(t_k)}\le\norm{Q}\normH{\zhat(t_{k+1})-\zhat(t_k)}\to0, \quad \text{as } k\to\infty.
	\end{equation*} 
	On the other hand, by \eqref{H1}--\eqref{H2}, $\delta\coloneqq\min_{[0,T^*]}m>0$, and the contact identity \eqref{eq:contact} gives $\normQ{q(t_{k+1})-q(t_k)}\ge m(t_{k+1})\ge\delta$ for every $k$. This is a contradiction.
\end{proof}

\begin{remark}\label{rem:no-rate}
	\begin{itemize}[leftmargin=*]
		\item In \cite{BaudouinErvedoza2025}, the exclusion of Zeno behaviour was obtained, under additional structural assumptions (in particular initial data in the domain of the operator and skew-adjointness of $\mathcal A$), from an explicit lower bound on $t_{k+1}-t_k$ depending on $T^*$. The argument above makes this unnecessary.
		\item 		\Cref{thm:main-wp}(ii) excludes accumulation of events at a finite time, for every $C_0$-semigroup and every threshold of the form \eqref{eq:ruleA}, but gives no information on how small the inter-event times $t_{k+1}-t_k$ may become along an infinite sequence. They may in principle tend to $0$ while $t_k\to\infty$. A positive lower bound, independent of $k$ (and possibly of the initial datum), is a strictly stronger property, studied in \cref{sec:dwell}.
	\end{itemize}
\end{remark}

\section{Stability analysis}\label{sec:stab}

In this section, we prove \cref{thm:stab}. Throughout, \cref{A:gen,A:BK,A:Kstab,A:CL,A:Lstab} hold, $\gamma\ge0$, and $m$ satisfies \eqref{H1}--\eqref{H2}. By \Cref{thm:main-wp}(ii), $T^*=\infty$, and \cref{cor:global} provides the representation \eqref{eq:global-K} of the observer around the stable closed-loop generator $\AK$, in which the sampling error enters as a forcing term bounded by $\norm{D}\Theta$.

For the stability analysis, we rewrite the global mild formulation
with respect to the stabilized generator $A_K=A+BK$. In this
representation, the sampling error appears as an additive
perturbation controlled by the triggering threshold.

\begin{corollary}[Global representation]\label{cor:global}
	Under the assumptions of \cref{prop:wp-tech}, \eqref{eq:global-A} holds for every $t\ge0$, and
	\begin{equation}\label{eq:global-K}
		\zhat(t)=T_K(t)\zhat_0+\int_0^tT_K(t-s)\Big(LCe(s)+D\big(q(t_{k(s)})-q(s)\big)\Big)\,ds,
	\end{equation}
	with, for every $s\ge0$, $\normH{D(q(t_{k(s)})-q(s))}\le\norm{D}\,\Theta(s)$.
\end{corollary}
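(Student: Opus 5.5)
The plan has three steps: extend \eqref{eq:global-A} to all of $[0,\infty)$, change the reference generator to $\AK$, and read off the bound on the sampling error from the triggering rule.

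\emph{Step 1: \eqref{eq:global-A} on $[0,\infty)$.} By \cref{thm:main-wp}(ii), $T^*=\infty$. Hence \cref{prop:wp-tech}(iii) already gives \eqref{eq:global-A} for every $t\ge0$. In this formula the forcing is $f(s)\coloneqq BK\zhat(t_{k(s)})+LCe(s)$. It is piecewise continuous with finitely many pieces on each $[0,t]$, so $f\in L^1_{\mathrm{loc}}([0,\infty);\Hcal)$. Thus $\zhat$ is the mild solution of $\dot v=\Acal v+f$, $v(0)=\zhat_0$, on $[0,\infty)$.

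\emph{Step 2: change of generator.} Apply \cref{lem:change-gen} with $P=BK\in\Lcal(\Hcal)$, using \cref{A:BK}; then $S_P=T_K$. This gives
\[
\zhat(t)=T_K(t)\zhat_0+\int_0^tT_K(t-s)\big(f(s)-BK\zhat(s)\big)\,ds .
\]
Here $f(s)-BK\zhat(s)=LCe(s)+BK(\zhat(t_{k(s)})-\zhat(s))$. By the factorization \eqref{eq:factor}, $BK=DQ$, so this equals $LCe(s)+D(q(t_{k(s)})-q(s))$. This is exactly \eqref{eq:global-K}.

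\emph{Step 3: the bound on the sampling error.} Fix $s\ge0$ and let $k=k(s)$, so $s\in[t_k,t_{k+1})$. By \eqref{S1}, $q(s)=QW_{t_k,\zhat(t_k)}(s)$. The definition \eqref{S2} of $t_{k+1}$ as a supremum ensures $\normQ{Q(W_{t_k,\zhat(t_k)}(\tau)-\zhat(t_k))}\le\Theta_{t_k,\zhat(t_k)}(\tau)$ for all $\tau\in[t_k,t_{k+1})$. This is the content of Step 1 of the proof of \cref{prop:wp-tech}: $\Phi\ge0$ on $[s_0,\sigma)$. Taking $\tau=s$ gives $\normQ{q(s)-q(t_k)}\le\Theta(s)$, hence $\normH{D(q(t_{k(s)})-q(s))}\le\norm{D}\Theta(s)$.

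The only point needing care is that \eqref{S2} yields the inequality on the whole half-open interval $[t_k,t_{k+1})$, including when $t_{k+1}=\infty$. This follows directly from the supremum characterization, so I expect no real obstacle. The result is essentially bookkeeping.
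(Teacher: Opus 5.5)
Your proposal is correct and follows the paper's own proof. You get $T^*=\infty$ from \cref{thm:main-wp}(ii), apply \cref{lem:change-gen} with $P=BK$ to \eqref{eq:global-A} using the factorization \eqref{eq:factor}, and read the bound off \eqref{S2}, with the local-integrability and half-open-interval details that the paper leaves implicit spelled out correctly.
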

\begin{proof}
	By \cref{thm:main-wp}, $T^*=\infty$.
	Apply \cref{lem:change-gen} to \eqref{eq:global-A} with
	$P=BK$ and $f(s)=BK\zhat(t_{k(s)})+LCe(s)$, and use
	\eqref{eq:factor}. The last estimate follows from \eqref{S2}.
\end{proof}
\subsection{A small-gain convolution criterion}

We give a convolution criterion that turns a small-gain assumption into an exponential stability estimate.

\begin{proposition}[Convolution criterion]\label{prop:criterion}
	Let $\lambda\ge0$ with $\lambda\le\alpha_L$ and $\lambda<\alpha_K$. For $T\ge0$, set
	\[
	X_T\coloneqq\sup_{0\le t\le T}e^{\lambda t}\normH{\zhat(t)},\qquad J_T\coloneqq\sup_{0\le t\le T}e^{\lambda t}\int_0^te^{-\alpha_K(t-s)}\Theta(s)\,ds .
	\]
	Assume that there exist $a,b\ge0$ with $M_K\norm{D}a<1$ such that $J_T\le aX_T+b$ for every $T\ge0$. Then, for every $t\ge0$,
	\begin{equation}\label{eq:criterion}
		\normH{\zhat(t)}\le\frac{M_K\normH{\zhat_0}+\dfrac{M_KM_L\norm{LC}}{\alpha_K-\lambda}\normH{e_0}+M_K\norm{D}b}{1-M_K\norm{D}a}\;e^{-\lambda t}.
	\end{equation}
\end{proposition}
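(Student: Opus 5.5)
The plan is to start from the representation \eqref{eq:global-K} of \cref{cor:global}, multiply by $e^{\lambda t}$, and bound each of the three terms separately. Then we take the supremum over $[0,T]$ and absorb the $X_T$ term on the right-hand side, using the small-gain assumption $M_K\norm{D}a<1$.

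\emph{Step 1 (term-by-term bounds).} Fix $t\in[0,T]$.
\begin{itemize}[leftmargin=*]
\item By \cref{A:Kstab} and $\lambda\ge0$, the free term satisfies $e^{\lambda t}\normH{T_K(t)\zhat_0}\le M_K e^{-(\alpha_K-\lambda)t}\normH{\zhat_0}\le M_K\normH{\zhat_0}$.
\item For the output-residual term we use \cref{lem:error}: $\normH{LCe(s)}\le\norm{LC}M_Le^{-\alpha_Ls}\normH{e_0}$. Together with $\norm{T_K(t-s)}\le M_Ke^{-\alpha_K(t-s)}$, this gives
\[
e^{\lambda t}\Big\|\int_0^tT_K(t-s)LCe(s)\,ds\Big\|_{\Hcal}\le M_KM_L\norm{LC}\normH{e_0}\,e^{\lambda t}\int_0^te^{-\alpha_K(t-s)}e^{-\alpha_Ls}\,ds .
\]
We then apply \cref{lem:conv-exp} with $\alpha=\alpha_K$ and $\nu=\alpha_L\ge\lambda$; this is exactly where the hypotheses $\lambda<\alpha_K$ and $\lambda\le\alpha_L$ are used. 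The term is therefore bounded by $\frac{M_KM_L\norm{LC}}{\alpha_K-\lambda}\normH{e_0}$.
\item For the sampling term, \cref{cor:global} gives the pointwise bound $\normH{D(q(t_{k(s)})-q(s))}\le\norm{D}\Theta(s)$. Hence this term is at most $M_K\norm{D}\,e^{\lambda t}\int_0^te^{-\alpha_K(t-s)}\Theta(s)\,ds\le M_K\norm{D}J_T$.
\end{itemize}

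\emph{Step 2 (absorption).} Taking the supremum over $t\in[0,T]$ and inserting the assumption $J_T\le aX_T+b$, we get
\[
X_T\le M_K\normH{\zhat_0}+\frac{M_KM_L\norm{LC}}{\alpha_K-\lambda}\normH{e_0}+M_K\norm{D}b+M_K\norm{D}a\,X_T .
\]
To absorb the last term we need $X_T<\infty$. This holds because $\zhat\in C([0,\infty);\Hcal)$ by \cref{thm:main-wp} (recall $T^*=\infty$), so $\zhat$ is bounded on the compact interval $[0,T]$. Since $M_K\norm{D}a<1$, we rearrange to bound $X_T$ by the constant in \eqref{eq:criterion}. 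That constant is independent of $T$, so for every $t$ we may take $T=t$ to obtain $e^{\lambda t}\normH{\zhat(t)}\le X_t$, which is \eqref{eq:criterion}.

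\emph{Main obstacle.} The only delicate points are measurability of $s\mapsto\Theta(s)$ and finiteness of $X_T$.
\begin{itemize}[leftmargin=*]
\item \emph{Measurability.} $\Theta$ is piecewise continuous on $[0,T]$: it is continuous on each inter-event interval, and only finitely many events lie in $[0,T]$ because $T^*=\infty$. Hence $J_T$ is well defined and the Bochner integrals make sense.
\item \emph{Finiteness.} $X_T<\infty$ follows from the continuity of $\zhat$ on $[0,T]$, as noted in Step 2.
\end{itemize}
Everything else is routine.
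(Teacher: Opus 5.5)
Your proof is correct and follows the paper's argument essentially line by line: you bound the three terms of the representation \eqref{eq:global-K}, apply \cref{lem:conv-exp} with $\nu=\alpha_L$, take the supremum over $[0,T]$, and absorb $M_K\norm{D}aX_T$ using $X_T<\infty$ from the continuity of $\zhat$. One small slip: the free-term bound $e^{-(\alpha_K-\lambda)t}\le 1$ comes from $\lambda<\alpha_K$, not from $\lambda\ge0$; the hypothesis $\lambda\ge0$ is only needed to apply \cref{lem:conv-exp}.
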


\begin{proof}
	The function $\Theta$ is continuous on each $[t_k,t_{k+1})$, hence locally integrable. By \eqref{eq:global-K}, \cref{A:Kstab}, \cref{lem:error} and \cref{cor:global},
	\begin{multline}\label{eq:bound_zhat}
		\normH{\zhat(t)}\le M_Ke^{-\alpha_Kt}\normH{\zhat_0}+M_KM_L\norm{LC}\normH{e_0}\int_0^te^{-\alpha_K(t-s)}e^{-\alpha_Ls}\,ds\\
		+M_K\norm{D}\int_0^te^{-\alpha_K(t-s)}\Theta(s)\,ds .
	\end{multline}
	Multiplying by $e^{\lambda t}$ and using \cref{lem:conv-exp} (with $\nu=\alpha_L\ge\lambda$), we get, for $0\le t\le T$,
	\[
	e^{\lambda t}\normH{\zhat(t)}\le M_K\normH{\zhat_0}+\frac{M_KM_L\norm{LC}}{\alpha_K-\lambda}\normH{e_0}+M_K\norm{D}J_T .
	\]
	Taking the supremum over $t\in[0,T]$ and using $J_T\le aX_T+b$ gives
	\begin{equation*}
		X_T\le M_K\normH{\zhat_0}+\frac{M_KM_L\norm{LC}}{\alpha_K-\lambda}\normH{e_0}+M_K\norm{D}(aX_T+b).
	\end{equation*}
	Since $\zhat$ is continuous, $X_T<\infty$, and the term $M_K\norm{D}aX_T$ can be absorbed. As $T$ is arbitrary, \eqref{eq:criterion} follows.
\end{proof}

For a general internal variable $m$, we set
\begin{equation}\label{eq:Jm}
	J_\lambda[m]\coloneqq\sup_{t\ge0}e^{\lambda t}\int_0^te^{-\alpha_K(t-s)}m(s)\,ds\in[0,\infty],\qquad \kappa_\gamma\coloneqq M_K\norm{D}\norm{Q}\gamma .
\end{equation}
If \eqref{H3} holds with $\mu\ge\lambda$ and $\lambda<\alpha_K$, then $J_\lambda[m]\le C_m (\alpha_K-\lambda)^{-1}$ by \cref{lem:conv-exp}. 

\begin{corollary}[Exponential bound at a prescribed rate]\label{cor:rate}
	Let $\lambda\ge0$ with $\lambda\le\alpha_L$, $\lambda<\alpha_K$, $\kappa_\gamma<\alpha_K-\lambda$ and $J_\lambda[m]<\infty$. Then
	\begin{equation}\label{eq:rate}
		C_\lambda^*[\widehat z]\coloneqq\sup_{t\ge0}e^{\lambda t}\normH{\zhat(t)}\le\frac{\alpha_K-\lambda}{\alpha_K-\lambda-\kappa_\gamma}\Big[M_K\normH{\zhat_0}+\frac{M_KM_L\norm{LC}}{\alpha_K-\lambda}\normH{e_0}+M_K\norm{D}J_\lambda[m]\Big].
	\end{equation}
\end{corollary}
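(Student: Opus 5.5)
We derive \cref{cor:rate} from \cref{prop:criterion}. The task is to find constants $a,b\ge0$ with $J_T\le aX_T+b$ for every $T\ge0$ and $M_K\norm{D}a<1$. The natural choice is $a=\gamma\norm{Q}/(\alpha_K-\lambda)$ and $b=J_\lambda[m]$.

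First, along the solution, \cref{rem:solution} gives $\Theta(s)=\gamma\normQ{q(s)}+m(s)$ with $q=Q\zhat$. Hence $\Theta(s)\le\gamma\norm{Q}\normH{\zhat(s)}+m(s)$ for every $s\ge0$, using $T^*=\infty$ from \cref{thm:main-wp}.

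Next, fix $T\ge0$ and $t\in[0,T]$, and split the convolution defining $J_T$ into two parts.

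For the relative part, we write $\normH{\zhat(s)}=e^{-\lambda s}\cdot e^{\lambda s}\normH{\zhat(s)}\le e^{-\lambda s}X_T$ for $s\le t\le T$. Then
\[
e^{\lambda t}\int_0^te^{-\alpha_K(t-s)}\gamma\norm{Q}\normH{\zhat(s)}\,ds\le\gamma\norm{Q}X_T\,e^{\lambda t}\int_0^te^{-\alpha_K(t-s)}e^{-\lambda s}\,ds\le\frac{\gamma\norm{Q}}{\alpha_K-\lambda}X_T,
\]
where the last step is \cref{lem:conv-exp} with $\alpha=\alpha_K$ and $\nu=\lambda$.

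For the internal-variable part, the term $e^{\lambda t}\int_0^te^{-\alpha_K(t-s)}m(s)\,ds$ is bounded by $J_\lambda[m]$ by the definition \eqref{eq:Jm}.

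Taking the supremum over $t\in[0,T]$ gives $J_T\le aX_T+b$ with the announced $a$ and $b$.

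The small-gain requirement reads $M_K\norm{D}a=\kappa_\gamma/(\alpha_K-\lambda)<1$, which is exactly the hypothesis $\kappa_\gamma<\alpha_K-\lambda$. Applying \eqref{eq:criterion} and using
\[
\frac{1}{1-\kappa_\gamma/(\alpha_K-\lambda)}=\frac{\alpha_K-\lambda}{\alpha_K-\lambda-\kappa_\gamma}
\]
gives $e^{\lambda t}\normH{\zhat(t)}\le$ the right-hand side of \eqref{eq:rate} for every $t$. Taking the supremum over $t\ge0$ yields the bound on $C_\lambda^*[\zhat]$.

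There is no real obstacle. The only point needing care is measurability and local integrability of $s\mapsto\Theta(s)$, which is piecewise continuous, so the split integrals make sense; \cref{prop:criterion} already addresses this. The finiteness of $X_T$ used in that proposition comes from the continuity of $\zhat$.
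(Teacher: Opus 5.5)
Your proposal is correct and matches the paper's proof: the same choice $a=\gamma\norm{Q}(\alpha_K-\lambda)^{-1}$, $b=J_\lambda[m]$, the same bound on the relative part, then \cref{prop:criterion}. The only cosmetic difference is that you invoke \cref{lem:conv-exp} with $\nu=\lambda$, where the paper instead rewrites the integral as $\int_0^te^{-(\alpha_K-\lambda)(t-s)}e^{\lambda s}\normH{\zhat(s)}\,ds$.
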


\begin{proof}
	For every $s\ge0$, $\Theta(s)=\gamma\normQ{q(s)}+m(s)\le\gamma\norm{Q}\normH{\zhat(s)}+m(s)$. Since, for $t\le T$,
	\[
	e^{\lambda t}\int_0^te^{-\alpha_K(t-s)}\normH{\zhat(s)}\,ds=\int_0^te^{-(\alpha_K-\lambda)(t-s)}e^{\lambda s}\normH{\zhat(s)}\,ds\le\frac{X_T}{\alpha_K-\lambda},
	\]
	we have $J_T\le aX_T+b$ with $a=\gamma\norm{Q} (\alpha_K-\lambda)^{-1}$ and $b=J_\lambda[m]$. Moreover $M_K\norm{D}a=\kappa_\gamma (\alpha_K-\lambda)^{-1}<1$, and \cref{prop:criterion} gives \eqref{eq:rate}.
\end{proof}

\subsection{Exponential stability
}

We prove \cref{thm:stab} with the explicit constant
\begin{equation}\label{eq:stab_C}
	C_\lambda\coloneqq\frac{M_K(\alpha_K-\lambda)}{\alpha_K-\lambda-\kappa_\gamma}\max\Big\{1,\frac{M_L\norm{LC}}{\alpha_K-\lambda},\frac{\norm{D}}{\alpha_K-\lambda}\Big\}.
\end{equation}

\begin{proof}[Proof of \cref{thm:stab}]
	By \eqref{H3} and \cref{lem:conv-exp}, $J_\lambda[m]\le C_m (\alpha_K-\lambda)^{-1}$, and \cref{cor:rate} gives \eqref{eq:stab}. Since $z=\zhat-e$ and $\normH{e(t)}\le M_Le^{-\alpha_Lt}\normH{e_0}\le M_Le^{-\lambda t}\normH{e_0}$, \eqref{eq:stab-full} follows.
\end{proof}

\begin{remark}[Admissible decay rates]\label{rem:rates}
	The rate $\lambda$ is limited by the decay rate $\alpha_L$ of the observer error, the decay rate $\mu$ of the internal variable, and the small-gain margin $\alpha_K-\kappa_\gamma$.\\ 
	The end-point values $\lambda=\alpha_L$ and $\lambda=\mu$ are admissible, whereas $\lambda<\alpha_K$ is needed in general, because the feedback term $\gamma\normQ{q}$ must be absorbed through the factor $(\alpha_K-\lambda)^{-1}$.\\ 
	If $\gamma=0$ and $\alpha_K<\min\{\alpha_L,\mu\}$, the rate $\lambda=\alpha_K$ itself is admissible. Indeed, multiplying the estimate of the proof of \cref{prop:criterion} by $e^{\alpha_Kt}$ directly gives
	\begin{equation*}
		e^{\alpha_Kt}\normH{\zhat(t)}\le M_K\normH{\zhat_0}+\frac{M_KM_L\norm{LC}}{\alpha_L-\alpha_K}\normH{e_0}+\frac{M_K\norm{D}C_m}{\mu-\alpha_K}.
	\end{equation*}
	In particular, for $\gamma=0$, every rate $\lambda<\min\{\alpha_K,\alpha_L,\mu\}$ is admissible.
\end{remark}

\begin{remark}[Dependence on the internal variable]\label{rem:Cm}
	The constant $C_m$ in \eqref{eq:stab} may depend on the initial
	error $e_0$. This does not affect implementability since $C_m$ enters
	the stability estimate, but is not needed to evaluate the
	triggering threshold. Implementability only requires that $m(t)$
	be computable from known quantities and available measurements.
	For the output-residual filters introduced in \cref{sec:thresholds},
	$m(t)$ is computed from $Ce=C\zhat-y$ or $LCe$, without prior
	knowledge of $e_0$, and one can take
	$C_m=m_0+c\normH{e_0}$, with $c$ independent of the initial datum.
\end{remark}

\section{Admissible internal variables}\label{sec:thresholds}

This section describes concrete internal variables $m$ and checks the hypotheses they satisfy. Besides \eqref{H1}--\eqref{H3}, two further properties are used for dwell-times in \cref{sec:dwell}, namely,
\begin{alignat}{2}
	&\tag{H4}\quad& &\exists\,C_e\ge0,\quad\sup_{s\in[\max(t-1,0),\,t]}\normH{LCe(s)}\le C_e\,m(t)\quad\text{for every }t\ge0,\label{H4}\\
	&\tag{H5}\quad& &\exists\,c_m>0,\ \exists\,\beta>0,\quad m(t)\ge c_m\,e^{-\beta t}\quad\text{for every }t\ge0.\label{H5}
\end{alignat}
\eqref{H4} expresses that the internal variable dominates the measured output residual over unit time windows ending at the current time, while \eqref{H5} prevents $m$ from decaying faster than a prescribed exponential. 

The following result whose proof is in the appendix presents different choices of the internal variable. 

\begin{proposition}[Standard internal variables]\label{prop:examples}
	Let \cref{A:gen,A:CL,A:Lstab} hold and $e(t)=T_L(t)e_0$.
	\begin{enumerate}[label=\textup{(E\arabic*)},leftmargin=*]
		\item \textup{(Exponential.)} The function
		\begin{equation*}
			m(t)=m_0e^{-\mu t} \quad \text{with } m_0,\mu>0,
		\end{equation*}
		satisfies \eqref{H1}--\eqref{H3} with $C_m=m_0$, \eqref{H5} with $(c_m,\beta)=(m_0,\mu)$, and, if $\mu\le\alpha_L$, \eqref{H4} with $C_e=e^{\mu}\norm{LC}M_L\normH{e_0}m_0^{-1}$.
		\item \textup{(Output residual filter.)} Let $m_0>0$, $\zeta\ge0$, $0<\mu<\alpha_L$, and let $m$ solve
		\begin{equation*}
			\dot m=-\mu m+\zeta\norm{Ce}_{\Ycal} \quad \text{with } m(0)=m_0,
		\end{equation*}
		that is, $m(t)=m_0e^{-\mu t}+\zeta\int_0^te^{-\mu(t-s)}\norm{Ce(s)}_{\Ycal}\,ds$. Then
		\[
		m_0e^{-\mu t}\le m(t)\le\Big(m_0+\frac{\zeta\norm{C}M_L\normH{e_0}}{\alpha_L-\mu}\Big)e^{-\mu t},\qquad t\ge0,
		\]
		so that $m$ satisfies \eqref{H1}--\eqref{H3} with $C_m=m_0+\zeta\norm{C}M_L\normH{e_0}(\alpha_L-\mu)^{-1}$, \eqref{H5} with $(c_m,\beta)=(m_0,\mu)$, and \eqref{H4} with $C_e=e^{\mu}\norm{LC}M_L\normH{e_0}m_0^{-1}$.
		\item \textup{(Supremum output residual filter.)} Let $m_0,\mu,\eta,\nu>0$ and
		\begin{equation}\label{eq:supfilter}
			m(t)=m_0e^{-\mu t}+\eta H_\nu(t),\qquad H_\nu(t)\coloneqq\sup_{0\le s\le t}e^{-\nu(t-s)}\normH{LCe(s)} .
		\end{equation}
		Then $m$ satisfies \eqref{H1}--\eqref{H3} with $C_m=m_0+\eta\norm{LC}M_L\normH{e_0}$ and rate $\min\{\mu,\nu,\alpha_L\}$, \eqref{H5} with $(c_m,\beta)=(m_0,\mu)$, and \eqref{H4} with $C_e=e^{\nu}\eta^{-1}$, independently of the initial datum $(\zhat_0, e_0)$.
		\item \textup{(Normalized.)} The function
		\begin{equation*}
			m(t)=m_0e^{-\beta t}N_0 \quad \text{with } m_0,\beta>0, \quad N_0=\normH{\zhat_0}+\normH{e_0}>0,
		\end{equation*}
		satisfies \eqref{H1}--\eqref{H3} with $C_m=m_0N_0$, \eqref{H5} with $c_m=m_0N_0$, and, if $\beta<\alpha_K$, $J_\beta[m]\le m_0N_0(\alpha_K-\beta)^{-1}$. If $\beta\le\alpha_L$, it satisfies \eqref{H4} with $C_e=e^{\beta}\norm{LC}M_Lm_0^{-1}$, independently of the initial datum $(\zhat_0, e_0)$.
	\end{enumerate}
\end{proposition}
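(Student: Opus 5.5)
The plan is to verify the hypotheses item by item, using only \cref{lem:error} and the elementary computation behind \cref{lem:conv-exp}. One pointwise bound is used throughout. If $\rho\le\alpha_L$ and $s\in[\max(t-1,0),t]$, then by \eqref{eq:err}
\[
\normH{LCe(s)}\le\norm{LC}M_L\normH{e_0}e^{-\alpha_Ls}\le\norm{LC}M_L\normH{e_0}e^{-\rho s}\le e^{\rho}\norm{LC}M_L\normH{e_0}e^{-\rho t},
\]
since $s\ge t-1$. Dividing by a lower bound $m(t)\ge c\,e^{-\rho t}$ then gives \eqref{H4} with $C_e=e^{\rho}\norm{LC}M_L\normH{e_0}c^{-1}$.

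For (E1), \eqref{H1}, \eqref{H2}, \eqref{H3} and \eqref{H5} are immediate. \eqref{H4} follows from the bound above with $\rho=\mu\le\alpha_L$ and $c=m_0$.

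For (E2), the variation of constants formula gives the explicit expression for $m$. Continuity comes from continuity of the convolution of the continuous function $s\mapsto\norm{Ce(s)}_\Ycal$. The lower bound $m\ge m_0e^{-\mu t}$ holds because the integrand is nonnegative; this gives \eqref{H2}, \eqref{H5}, and \eqref{H4} via the bound above with $\rho=\mu<\alpha_L$, $c=m_0$. For the upper bound, insert $\norm{Ce(s)}_\Ycal\le\norm{C}M_Le^{-\alpha_Ls}\normH{e_0}$ and compute
\[
\int_0^te^{-\mu(t-s)}e^{-\alpha_Ls}\,ds=e^{-\mu t}\int_0^te^{-(\alpha_L-\mu)s}\,ds\le\frac{e^{-\mu t}}{\alpha_L-\mu},
\]
which yields the stated $C_m$ and \eqref{H3}.

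For (E3), the only non-routine point is continuity of $H_\nu$, and this is where I expect the (mild) obstacle. I would write $H_\nu(t)=e^{-\nu t}\max_{0\le s\le t}g(s)$ with $g(s)\coloneqq e^{\nu s}\normH{LCe(s)}$. Here $g$ is continuous because $e=T_L(\cdot)e_0$ is continuous and $LC$ is bounded. The running maximum of a continuous function is continuous, by uniform continuity of $g$ on compacts. Positivity comes from the term $m_0e^{-\mu t}$, which also gives \eqref{H5} with $(m_0,\mu)$. For \eqref{H3}, set $r=\min\{\nu,\alpha_L\}$. Then
\[
e^{-\nu(t-s)}\normH{LCe(s)}\le\norm{LC}M_L\normH{e_0}e^{-\nu(t-s)-\alpha_Ls}\le\norm{LC}M_L\normH{e_0}e^{-rt},
\]
so $m(t)\le(m_0+\eta\norm{LC}M_L\normH{e_0})e^{-\min\{\mu,\nu,\alpha_L\}t}$. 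For \eqref{H4}, take $s\in[\max(t-1,0),t]$. Then
\[
\normH{LCe(s)}=e^{\nu(t-s)}e^{-\nu(t-s)}\normH{LCe(s)}\le e^{\nu}H_\nu(t)\le e^{\nu}\eta^{-1}m(t),
\]
which gives a data-independent $C_e$.

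For (E4), \eqref{H1}--\eqref{H3} and \eqref{H5} are trivial, given $N_0>0$. The bound on $J_\beta[m]$ is the computation
\[
e^{\beta t}\int_0^te^{-\alpha_K(t-s)}m_0N_0e^{-\beta s}\,ds=m_0N_0\int_0^te^{-(\alpha_K-\beta)(t-s)}\,ds\le\frac{m_0N_0}{\alpha_K-\beta}.
\]
For \eqref{H4}, apply the bound above with $\rho=\beta\le\alpha_L$ and $c=m_0N_0$, using $\normH{e_0}\le N_0$ so that $N_0$ cancels. This leaves $C_e=e^{\beta}\norm{LC}M_Lm_0^{-1}$, independent of the data.
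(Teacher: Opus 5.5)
Your proposal is correct and follows the paper's argument essentially step by step. It uses the same pointwise bound on $\normH{LCe(s)}$ for $s\ge t-1$, divided by a lower bound on $m$, to get \eqref{H4} in (E1), (E2) and (E4); the same convolution estimate for the upper bound in (E2); and the same running-maximum continuity argument and $e^{\nu(t-s)}\le e^{\nu}$ trick in (E3). Your single estimate with $r=\min\{\nu,\alpha_L\}$ for \eqref{H3} in (E3) is just a more compact version of the paper's case split between $\nu\ge\alpha_L$ and $\nu<\alpha_L$.
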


\begin{proof}
	(E1) The first claims are immediate. We only show that \eqref{H4} is satisfied.\\ 
	If $s\in[\max(t-1,0),t]$, then $s\ge t-1$, and by \cref{lem:error} and $\mu\le\alpha_L$, we deduce
	\begin{equation*}
		\normH{LCe(s)}\le\norm{LC}M_L\normH{e_0}e^{-\mu s}\le\norm{LC}M_L\normH{e_0}e^{\mu}e^{-\mu t}.
	\end{equation*}
	
	(E2) The variation of constants formula gives the expression of $m$, which is continuous since $s\mapsto\norm{Ce(s)}_{\Ycal}$ is. The lower bound is clear since $\zeta\ge0$.\\ 
	For the upper bound, $\norm{Ce(s)}_{\Ycal}\le\norm{C}M_L\normH{e_0}e^{-\alpha_Ls}$, so that
	\[
	\int_0^te^{-\mu(t-s)}\norm{Ce(s)}_{\Ycal}\,ds\le\norm{C}M_L\normH{e_0}e^{-\mu t}\int_0^te^{-(\alpha_L-\mu)s}\,ds\le\frac{\norm{C}M_L\normH{e_0}}{\alpha_L-\mu}e^{-\mu t}.
	\]
	\eqref{H4} follows from the lower bound as in (E1).
	
	(E3) We have $H_\nu(t)=e^{-\nu t}G(t)$ with $G(t)\coloneqq\sup_{0\le s\le t}g(s)$ and $g(s)\coloneqq e^{\nu s}\normH{LCe(s)}$, which is continuous. The running supremum $G$ of a continuous function is continuous.\\ 
	Indeed, for $0\le t<t'\le T$, $G(t')=\max\{G(t),\max_{[t,t']}g\}$ and $G(t)\ge g(t)$, so that
	\begin{equation*}
		0\le G(t')-G(t)\le\max_{[t,t']}g-g(t),
	\end{equation*}
	which tends to $0$ as $t'-t\to0$, uniformly on $[0,T]$, by uniform continuity of $g$ on $[0,T]$.\\ 
	Hence $m$ is continuous, and it is positive since $m_0>0$.\\ 
	By \cref{lem:error}, we obtain
	\begin{equation*}
		e^{\nu s}\normH{LCe(s)}\le\norm{LC}M_L\normH{e_0}e^{(\nu-\alpha_L)s}.
	\end{equation*}
	On the one hand, if $\nu\ge\alpha_L$, we have 
	\begin{equation*}
		H_\nu(t)\leq e^{-\nu t}\norm{LC}M_L\normH{e_0}e^{(\nu-\alpha_L)s}\le\norm{LC}M_L\normH{e_0}e^{-\alpha_Lt},
	\end{equation*}
	and one the other hand, if $\nu<\alpha_L$,
	\begin{equation*}
		H_\nu(t)\leq e^{-\nu t}\norm{LC}M_L\normH{e_0}e^{(\nu-\alpha_L)s}\le\norm{LC}M_L\normH{e_0}e^{-\nu t}.
	\end{equation*}
	This gives \eqref{H3}. For \eqref{H4}, if $s\in[\max(t-1,0),t]$, then
	\begin{equation*}
		m(t)\ge\eta e^{-\nu(t-s)}\normH{LCe(s)}\ge\eta e^{-\nu}\normH{LCe(s)}.
	\end{equation*}
	
	(E4) The first claims are immediate, with \cref{lem:conv-exp} for $J_\beta[m]$. For \eqref{H4}, as in (E1), since $\beta\le\alpha_L$ and $\normH{e_0}\le N_0$, we have, for $s\in[\max(t-1,0),t]$,
	\begin{equation*}
		\normH{LCe(s)}\le\norm{LC}M_L\normH{e_0}e^{\beta}e^{-\beta t}\le e^{\beta}\norm{LC}M_Lm_0^{-1}\,m(t).
	\end{equation*}
\end{proof}

\begin{remark}[Implementability]\label{rem:implement}
	The variables (E1)--(E3) are implementable. (E2) and (E3) only use the output residual $Ce=C\zhat-y$ (or $LCe$), which is computed from the observer and the measurement.\\ 
	The variable (E4) requires $\normH{e_0}$, which is not available. It becomes implementable if an a priori bound $E_0\ge\normH{e_0}$ is known (in which case we replace $N_0$ by $\normH{\zhat_0}+E_0$).\end{remark}

\section{Dwell-time}\label{sec:dwell}

In this section, \cref{A:gen,A:BK,A:CL} hold, $\gamma\ge0$, and $m$ satisfies \eqref{H1}--\eqref{H2}. We prove \cref{thm:main-dw} and give more specific results. The section is organized as follows.
\begin{itemize}[leftmargin=*]
	\item \Cref{sec:dwell-common} describes the mechanism common to all dwell-time estimates.
	\item \Cref{sec:dwell-bounded} treats bounded sampled dynamics. It contains the general estimate (\cref{thm:dw-bounded}), the case $\gamma=0$, the finite-dimensional case and the case of a finite-dimensional unstable part, and it ends with the proof of \cref{thm:main-dw}(i).
	\item \Cref{sec:necessity} shows that, for some systems, no triggering rule can combine a dwell-time independent of the data with bounded trajectories, which proves \cref{thm:main-dw}(ii).
\end{itemize}

\subsection{Definitions and common mechanism}\label{sec:dwell-common}

\begin{definition}[Dwell-time]\label{def:dwell}
	Let $\Ecal\subset\Hcal\times\Hcal$ be a class of initial data. A number $\tau_0>0$ is a dwell-time over $\Ecal$ if, for every $(\zhat_0,e_0)\in\Ecal$ and every admissible $k$ with $t_{k+1}<\infty$, one has $t_{k+1}-t_k\ge\tau_0$.\\ 
	If $\Ecal$ is reduced to one datum, $\tau_0$ is a dwell-time along the trajectory.\\ 
	If $\Ecal=\Hcal\times\Hcal$, it is a dwell-time independent of the initial datum.
\end{definition}

A dwell-time along a trajectory implies $t_k\ge k\tau_0$, hence the absence of Zeno behavior, and bounds the number of events per unit time by $1/\tau_0+1$. It is strictly stronger than \cref{thm:main-wp}(ii) (see \cref{rem:no-rate}).

All dwell-time estimates below follow the same pattern. By the contact identity \eqref{eq:contact}, at each event the increment of the sampled signal equals the threshold.\\
On the other hand, the increment identity \eqref{eq:incr-identity} bounds this increment by a quantity which, for small elapsed time $h_k=t_{k+1}-t_k$, is small compared with the threshold.\\ 
We first relate the size of the sampled signal to the threshold
at the next event.

\begin{lemma}[Size comparison]\label{lem:size}
	Consider the threshold $\Theta=\gamma\normQ{q}+m$ with $\gamma>0$. For every admissible $k$ with $t_{k+1}<\infty$,
	\[
	\normQ{q(t_k)}\le\Big(1+\frac1\gamma\Big)\Theta(t_{k+1}^-),\qquad\Theta(t_{k+1}^-)\coloneqq\Theta_{t_k,\zhat(t_k)}(t_{k+1})=\gamma\normQ{q(t_{k+1})}+m(t_{k+1}).
	\]
\end{lemma}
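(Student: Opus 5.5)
The plan is to combine the contact identity \eqref{eq:contact} with the triangle inequality in $\HQ$. Fix an admissible $k$ with $t_{k+1}<\infty$ and write $\Theta(t_{k+1}^-)=\Theta_{t_k,\zhat(t_k)}(t_{k+1})$. By \cref{prop:wp-tech}(ii), $\zhat$ is continuous at $t_{k+1}$, and on $[t_k,t_{k+1})$ it coincides with the frozen continuation $W_{t_k,\zhat(t_k)}$. Hence $q(t_{k+1})=QW_{t_k,\zhat(t_k)}(t_{k+1})$, and so $\Theta(t_{k+1}^-)=\gamma\normQ{q(t_{k+1})}+m(t_{k+1})$. This justifies the equality stated in the lemma.

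Next, the contact identity \eqref{eq:contact} gives $\normQ{q(t_{k+1})-q(t_k)}=\Theta(t_{k+1}^-)$. By the triangle inequality,
\[
\normQ{q(t_k)}\le\normQ{q(t_{k+1})}+\normQ{q(t_{k+1})-q(t_k)}=\normQ{q(t_{k+1})}+\Theta(t_{k+1}^-).
\]
It remains to bound $\normQ{q(t_{k+1})}$ by the threshold. Since $\gamma>0$ and $m\ge0$ by \eqref{H1}, we have $\gamma\normQ{q(t_{k+1})}\le\Theta(t_{k+1}^-)$, that is, $\normQ{q(t_{k+1})}\le\gamma^{-1}\Theta(t_{k+1}^-)$. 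Substituting this bound gives $\normQ{q(t_k)}\le(1+\gamma^{-1})\Theta(t_{k+1}^-)$.

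There is no real obstacle here. The only point needing care is the identification of $q(t_{k+1})$ with the value of the frozen continuation at $t_{k+1}$. This is exactly how $\zhat(t_{k+1})$ was defined in Step 2 of the proof of \cref{prop:wp-tech}, so the contact identity applies with the threshold evaluated at the left limit.
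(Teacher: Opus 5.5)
Your proof is correct and follows the paper's argument exactly: the contact identity \eqref{eq:contact}, then the triangle inequality, then the bound $\gamma\normQ{q(t_{k+1})}\le\Theta(t_{k+1}^-)$, which uses $m\ge0$. Your extra step identifying $q(t_{k+1})$ with $QW_{t_k,\zhat(t_k)}(t_{k+1})$ is correct, and it spells out what the paper leaves implicit.
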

\begin{proof}
	By \eqref{eq:contact} and using the triangle inequality, we obtain 
	\begin{equation*}
		\normQ{q(t_k)}\le\normQ{q(t_{k+1})}+\normQ{q(t_{k+1})-q(t_k)}\le\gamma^{-1}\Theta(t_{k+1}^-)+\Theta(t_{k+1}^-).
	\end{equation*}
\end{proof}
We next give a criterion ensuring a positive lower bound on
the inter-event times.
\begin{proposition}[Separation criterion]\label{prop:sep}
	Assume that there exists
	\begin{equation*}
		\Omega\colon(0,1]\to\R_+ \quad \text{with }\lim_{h\to0^+}\Omega(h)=0,
	\end{equation*}
	such that, for every admissible $k$ with $h_k\coloneqq t_{k+1}-t_k\le1$,
	\begin{equation}\label{eq:sep}
		\normQ{q(t_{k+1})-q(t_k)}\le\Omega(h_k)\,\Theta_{t_k,\zhat(t_k)}(t_{k+1}) .
	\end{equation}
	Then, for every admissible $k$,
	\begin{equation*}
		t_{k+1}-t_k\ge\tau_0\coloneqq\sup\{\tau\in(0,1],\quad \Omega(h)<1\text{ for all }h\in(0,\tau]\}>0.
	\end{equation*} 
	In particular, $\tau_0$ is independent of the initial datum as soon as $\Omega$ is.
\end{proposition}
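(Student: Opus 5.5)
The argument combines the contact identity \eqref{eq:contact} with \eqref{eq:sep}, and then uses the definition of $\tau_0$ as a supremum. We first check that $\tau_0$ is well defined and positive. Since $\Omega(h)\to0$ as $h\to0^+$, there is $\delta\in(0,1]$ with $\Omega(h)<1/2<1$ for all $h\in(0,\delta]$. Hence $\delta$ belongs to the set defining $\tau_0$, so this set is nonempty and $\tau_0\ge\delta>0$. The set is also an interval, as in Step 1 of \cref{prop:wp-tech}: if $\tau$ belongs to it, so does every $\tau'\in(0,\tau]$. Consequently $\Omega(h)<1$ for every $h\in(0,\tau_0)$.

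Fix an admissible $k$. If $t_{k+1}=\infty$, there is nothing to prove. Otherwise, suppose by contradiction that $h_k\coloneqq t_{k+1}-t_k<\tau_0$. Since $\tau_0\le1$, we have $h_k\le1$, and $h_k>0$ because the event sequence is strictly increasing. So \eqref{eq:sep} applies and $\Omega(h_k)<1$. The contact identity \eqref{eq:contact} gives
\[
\Theta_{t_k,\zhat(t_k)}(t_{k+1})=\normQ{q(t_{k+1})-q(t_k)}\le\Omega(h_k)\,\Theta_{t_k,\zhat(t_k)}(t_{k+1}).
\]
By \eqref{H2}, $\Theta_{t_k,\zhat(t_k)}(t_{k+1})\ge m(t_{k+1})>0$, so we may divide by it and obtain $1\le\Omega(h_k)<1$, which is a contradiction. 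Hence $t_{k+1}-t_k\ge\tau_0$.

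The only delicate point is the endpoint $h_k=\tau_0$, where $\Omega$ may equal $1$. The argument above handles it, because it only needs $h_k<\tau_0$ to reach a contradiction, and the conclusion is the non-strict inequality $t_{k+1}-t_k\ge\tau_0$. Finally, $\tau_0$ is defined from $\Omega$ alone. Therefore, if $\Omega$ does not depend on the initial datum, neither does $\tau_0$, and $\tau_0$ is a dwell-time independent of the initial datum in the sense of \cref{def:dwell}.
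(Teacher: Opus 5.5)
Your proof is correct and follows the paper's own argument: assume $h_k<\tau_0$, so that $\Omega(h_k)<1$ and \eqref{eq:sep} applies, combine it with the contact identity \eqref{eq:contact}, and divide by the positive threshold to reach a contradiction. Your remarks on why $\Omega(h)<1$ holds for every $h\in(0,\tau_0)$, and on the endpoint $h_k=\tau_0$, spell out details the paper leaves implicit.
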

\begin{proof}
	$\tau_0>0$ since $\Omega(h)\to0$. If $h_k<\tau_0$, then $\Omega(h_k)<1$ and, by \eqref{eq:contact} and \eqref{eq:sep}, we deduce
	\begin{equation*}
		\Theta_{t_k,\zhat(t_k)}(t_{k+1})\le\Omega(h_k)\Theta_{t_k,\zhat(t_k)}(t_{k+1})<\Theta_{t_k,\zhat(t_k)}(t_{k+1}),
	\end{equation*}
	since the latter is positive. This is a contradiction.
\end{proof}

\subsection{Bounded sampled dynamics}\label{sec:dwell-bounded}

In this subsection, we assume that \cref{A:BSD} holds. The monitored signal then evolves according to a bounded generator between two events, and \cref{lem:bounded-incr} applies to it.

\begin{lemma}[Increment estimate for bounded sampled dynamics]\label{lem:incr-bdd}
	Under \cref{A:gen,A:BK,A:CL,A:BSD}, for every admissible $k$ and every $h\ge0$ with $t_k+h\le t_{k+1}$, it holds
	\[
	\normQ{q(t_k+h)-q(t_k)}\le he^{h\norm{\Acal_Q}}\Big[\big(\norm{\Acal_Q}+\norm{QD}\big)\normQ{q(t_k)}+\norm{Q}\sup_{s\in[t_k,t_k+h]}\normH{LCe(s)}\Big],
	\]
	where $q(t_{k+1})$ is understood as the value of the frozen continuation at $t_{k+1}$, which coincides with $Q\zhat(t_{k+1})$.
\end{lemma}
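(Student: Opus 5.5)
We apply $Q$ to the increment identity \eqref{eq:incr-identity}, taking $s_0=t_k$ and $v=\zhat(t_k)$. By \eqref{S1}, $\zhat=W_{t_k,\zhat(t_k)}$ on $[t_k,t_{k+1})$, and the frozen continuation is continuous up to $t_{k+1}$. So for $t_k+h\le t_{k+1}$,
\[
q(t_k+h)-q(t_k)=Q(e^{h\Acal}-I)v+\int_0^hQe^{(h-r)\Acal}DQv\,dr+\int_0^hQe^{(h-r)\Acal}LCe(t_k+r)\,dr,
\]
where we used $BK=DQ$ from \eqref{eq:factor}. This holds with the convention of the statement at $h=t_{k+1}-t_k$.

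The key step is to move $Q$ through the semigroup using \cref{A:BSD}, namely $Qe^{s\Acal}=e^{s\Acal_Q}Q$. The three terms then become:
\begin{itemize}[leftmargin=*]
\item $(e^{h\Acal_Q}-I)q(t_k)$;
\item $\int_0^he^{(h-r)\Acal_Q}QDq(t_k)\,dr$;
\item $\int_0^he^{(h-r)\Acal_Q}QLCe(t_k+r)\,dr$.
\end{itemize}
The commutation is what removes the unbounded operator $\Acal$ and replaces it with the bounded $\Acal_Q$ on $\HQ$.

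We estimate each term separately.
\begin{itemize}[leftmargin=*]
\item By \cref{lem:bounded-incr} with $X=\HQ$ and $G=\Acal_Q$, the first term is at most $h\norm{\Acal_Q}e^{h\norm{\Acal_Q}}\normQ{q(t_k)}$.
\item For the two integrals, $\norm{e^{(h-r)\Acal_Q}}\le e^{(h-r)\norm{\Acal_Q}}\le e^{h\norm{\Acal_Q}}$ for $r\in[0,h]$, which follows from the power series of the exponential of a bounded operator.
\item The second term is therefore at most $he^{h\norm{\Acal_Q}}\norm{QD}\normQ{q(t_k)}$.
\item The third term is at most $he^{h\norm{\Acal_Q}}\norm{Q}\sup_{s\in[t_k,t_k+h]}\normH{LCe(s)}$, using $\normQ{QLCe}\le\norm{Q}\normH{LCe}$.
\end{itemize}
Adding the three bounds by the triangle inequality gives the claim.

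We do not expect a real obstacle. The only points needing care are the endpoint convention at $t_{k+1}$ and the commutation with the semigroup. The latter is licit because $Q$ is bounded and commutes with $e^{(h-r)\Acal}$ pointwise inside the Bochner integral. No restriction $h\le1$ is needed, since the constant $M_\Acal$ of \eqref{eq:incr-Q} is never used.
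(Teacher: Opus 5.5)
Your proof is correct and is essentially the paper's argument. Like the paper, you apply $Q$ to the increment identity \eqref{eq:incr-identity} with $s_0=t_k$, $v=\zhat(t_k)$, commute $Q$ through the semigroup using \cref{A:BSD} and \eqref{eq:factor}, and bound the three terms with \cref{lem:bounded-incr} and $\norm{e^{s\Acal_Q}}\le e^{s\norm{\Acal_Q}}$. Your third term, with $e^{(h-r)\Acal_Q}$ acting on $QLCe$, is the correct form of the expression that the paper writes, by a typo, with $e^{(h-r)\Acal}$.
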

\begin{proof}
	Applying $Q$ to \eqref{eq:incr-identity} with $s_0=t_k$, $v=\zhat(t_k)$, we obtain
	\begin{equation*}
		q(t_k+h)-q(t_k)=Q\big(e^{h\Acal}-I\big)\zhat(t_k)+Q\int_0^he^{(h-r)\Acal}BK\zhat(t_k)\,dr+Q\int_0^he^{(h-r)\Acal}LCe(t_k+r)\,dr .
	\end{equation*} 
	By \cref{A:BSD} and \eqref{eq:factor}, we deduce
	\begin{equation*}
		q(t_k+h)-q(t_k)=(e^{h\Acal_Q}-I)q(t_k)+\int_0^he^{(h-r)\Acal_Q}QDq(t_k)\,dr+\int_0^he^{(h-r)\Acal}QLCe(t_k+r)\,dr .
	\end{equation*}
	We then conclude with \cref{lem:bounded-incr} and $\norm{e^{s\Acal_Q}}\le e^{s\norm{\Acal_Q}}$.
\end{proof}

\begin{theorem}[Dwell-time for bounded sampled dynamics]\label{thm:dw-bounded}
	Let \cref{A:gen,A:BK,A:CL,A:BSD} hold and consider the threshold $\Theta=\gamma\normQ{q}+m$, with $m$ satisfying \eqref{H1}--\eqref{H2}. 
	\begin{enumerate}[label=\textup{(\roman*)}]
		\item If $\gamma>0$ and $m$ satisfies \eqref{H4}, then $t_{k+1}-t_k\ge\tau_0$ for every admissible $k$, with
		\begin{equation}\label{eq:tau-i}
			\tau_0=\min\Big\{1,\ \frac{e^{-\norm{\Acal_Q}}}{(\norm{\Acal_Q}+\norm{QD})(1+\gamma^{-1})+\norm{Q}C_e}\Big\}.
		\end{equation}
		\item If $\gamma=0$, \cref{A:Kstab,A:Lstab} hold, $m$ satisfies \eqref{H5} with $\beta\le\alpha_L$, $\beta<\alpha_K$, and $J_\beta[m]<\infty$, then $t_{k+1}-t_k\ge\tau_0$ for every admissible $k$, with
		\begin{equation}\label{eq:tau-ii}
			\tau_0=\min\Big\{1,\ \frac{c_m\,e^{-\beta-\norm{\Acal_Q}}}{\norm{Q}\big[(\norm{\Acal_Q}+\norm{QD})\,C_\beta^*[\widehat z]+\norm{LC}M_L\normH{e_0}\big]}\Big\},
		\end{equation}
		where 
		\begin{equation}\label{defcb}C_\beta^*[\widehat z]=\sup_{t\ge0}e^{\beta t}\normH{\zhat(t)}\end{equation} is bounded by \eqref{eq:rate} with $\gamma=0$.
	\end{enumerate}
	In case \textup{(i)}, $\tau_0$ depends on the initial datum only through $C_e$ (which is given in \eqref{H4}, and computed on the examples of Proposition \ref{prop:examples}).\\
	In case \textup{(ii)}, $\tau_0$ depends on the initial datum only through $C_\beta^*[\widehat z]c_m^{-1}$ and $\normH{e_0}c_m^{-1}$ ($c_m$ is given in \eqref{H5}).
\end{theorem}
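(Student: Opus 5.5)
Both parts reduce to the separation criterion of \cref{prop:sep}. We need a function $\Omega(h)$, defined for $h\in(0,1]$, such that \eqref{eq:sep} holds with $\Omega(h)\le h/\tau_0$. Then $\Omega(h)<1$ for $h<\tau_0$, and \cref{prop:sep} gives $t_{k+1}-t_k\ge\tau_0$. In both cases the starting point is \cref{lem:incr-bdd} with $h=h_k\le1$:
\[
\normQ{q(t_{k+1})-q(t_k)}\le h_ke^{\norm{\Acal_Q}}\Big[\big(\norm{\Acal_Q}+\norm{QD}\big)\normQ{q(t_k)}+\norm{Q}\sup_{s\in[t_k,t_{k+1}]}\normH{LCe(s)}\Big].
\]
We have used $e^{h\norm{\Acal_Q}}\le e^{\norm{\Acal_Q}}$ for $h\le1$. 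If $h_k>1$, the bound $h_k\ge\tau_0$ holds trivially since $\tau_0\le1$, so only $h_k\le1$ needs attention. The remaining work is to bound the bracket by a multiple of $\Theta(t_{k+1}^-)=\Theta_{t_k,\zhat(t_k)}(t_{k+1})$.

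\emph{Case (i).} Apply \cref{lem:size} to get $\normQ{q(t_k)}\le(1+\gamma^{-1})\Theta(t_{k+1}^-)$. Since $h_k\le1$, we have $[t_k,t_{k+1}]\subset[\max(t_{k+1}-1,0),t_{k+1}]$. Then \eqref{H4} at $t=t_{k+1}$ gives
\[
\sup_{s\in[t_k,t_{k+1}]}\normH{LCe(s)}\le C_e\,m(t_{k+1})\le C_e\,\Theta(t_{k+1}^-).
\]
Combining these, \eqref{eq:sep} holds with
\[
\Omega(h)=h\,e^{\norm{\Acal_Q}}\Big[(\norm{\Acal_Q}+\norm{QD})(1+\gamma^{-1})+\norm{Q}C_e\Big].
\]
The condition $\Omega(h)<1$ for all $h<\tau_0$ yields exactly \eqref{eq:tau-i}, with the cap at $1$ built into \cref{prop:sep}.

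\emph{Case (ii), $\gamma=0$.} Here $\Theta=m$, so $\normQ{q}$ cannot be compared with the threshold through \cref{lem:size}. This is the main obstacle. Instead we use a lower bound on the threshold together with decay of the trajectory. By \eqref{H5}, $\Theta(t_{k+1}^-)=m(t_{k+1})\ge c_me^{-\beta t_{k+1}}\ge c_me^{-\beta}e^{-\beta t_k}$, using $h_k\le1$.

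On the other side, we bound the trajectory terms at rate $\beta$. By \cref{cor:rate} with $\lambda=\beta$ and $\gamma=0$ (its hypotheses are $\beta\le\alpha_L$, $\beta<\alpha_K$, $J_\beta[m]<\infty$, and $\kappa_0=0$), we get
\[
\normQ{q(t_k)}\le\norm{Q}\,C^*_\beta[\zhat]\,e^{-\beta t_k}.
\]
By \cref{lem:error} and $\beta\le\alpha_L$, for $s\ge t_k$,
\[
\normH{LCe(s)}\le\norm{LC}M_L\normH{e_0}e^{-\beta t_k}.
\]
Plugging these into the increment bound and dividing by the lower bound on $\Theta$ gives \eqref{eq:sep} with
\[
\Omega(h)=h\,\frac{e^{\beta+\norm{\Acal_Q}}\norm{Q}\big[(\norm{\Acal_Q}+\norm{QD})C^*_\beta[\zhat]+\norm{LC}M_L\normH{e_0}\big]}{c_m}.
\]
This yields \eqref{eq:tau-ii}.

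The stated dependence on the data can then be read off. In case (i), $\Omega$ involves the data only through $C_e$. In case (ii), it involves them only through the ratios $C^*_\beta[\zhat]/c_m$ and $\normH{e_0}/c_m$.
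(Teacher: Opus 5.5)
Your proof is correct and follows essentially the same route as the paper. The paper combines \cref{lem:incr-bdd} with the contact identity \eqref{eq:contact}, inserts the same bounds (\cref{lem:size} and \eqref{H4} in case (i); \eqref{H5}, \cref{cor:rate} and \cref{lem:error} in case (ii)), and divides by $\Theta^->0$ to get $1\le h_k\cdot(\dots)$, which is exactly the contradiction argument of \cref{prop:sep} that you invoke with your linear $\Omega$.
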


\begin{proof}
	Let $k$ be admissible with $t_{k+1}<\infty$ and $h_k=t_{k+1}-t_k\le1$ (otherwise there is nothing to prove), and write $\Theta^-\coloneqq\Theta_{t_k,\zhat(t_k)}(t_{k+1})>0$. By \cref{lem:incr-bdd} with $h=h_k$ and \eqref{eq:contact}, we have
	\begin{equation}\label{eq:dw-proof}
		\Theta^-\le h_ke^{\norm{\Acal_Q}}\Big[\big(\norm{\Acal_Q}+\norm{QD}\big)\normQ{q(t_k)}+\norm{Q}\sup_{s\in[t_k,t_{k+1}]}\normH{LCe(s)}\Big].
	\end{equation}
	(i) Since $h_k\le1$, $[t_k,t_{k+1}]\subset[\max(t_{k+1}-1,0),t_{k+1}]$, and \eqref{H4} at $t=t_{k+1}$ gives
	\begin{equation*}
		\sup_{[t_k,t_{k+1}]}\normH{LCe}\le C_em(t_{k+1})\le C_e\Theta^-.
	\end{equation*}
	By \cref{lem:size}, $\normQ{q(t_k)}\le(1+\gamma^{-1})\Theta^-$. Inserting in \eqref{eq:dw-proof} and dividing by $\Theta^->0$ gives
	\[
	1\le h_ke^{\norm{\Acal_Q}}\big[(\norm{\Acal_Q}+\norm{QD})(1+\gamma^{-1})+\norm{Q}C_e\big],
	\]
	which is \eqref{eq:tau-i}.\\
	(ii) Using \eqref{H5} and since $h_k\le1$, we have
	\begin{equation*}
		\Theta^-=m(t_{k+1})\ge c_me^{-\beta t_{k+1}}\ge c_me^{-\beta}e^{-\beta t_k}.
	\end{equation*}
	By \cref{cor:rate} (with $\gamma=0$ and which invokes \cref{A:Kstab,A:Lstab}), we deduce
	\begin{equation*}
		\normQ{q(t_k)}\le\norm{Q}C_\beta^*[\widehat z]e^{-\beta t_k},
	\end{equation*}
	and by \cref{lem:error} and since $\beta\le\alpha_L$, we obtain 
	\begin{equation*}
		\sup_{[t_k,t_{k+1}]}\normH{LCe}\le\norm{LC}M_L\normH{e_0}e^{-\beta t_k}.
	\end{equation*}
	Hence \eqref{eq:dw-proof} gives
	\[
	c_me^{-\beta}e^{-\beta t_k}\le h_ke^{\norm{\Acal_Q}}\norm{Q}\big[(\norm{\Acal_Q}+\norm{QD})C_\beta^*[\widehat z]+\norm{LC}M_L\normH{e_0}\big]e^{-\beta t_k},
	\]
	which is \eqref{eq:tau-ii}.
\end{proof}

\begin{remark}[The role of $\gamma$]\label{rem:role-gamma}
	In case (i), the relative margin provides the local comparison of \cref{lem:size} between the sampled state and the threshold, so that no decay assumption is needed. As $\gamma\to0$, \eqref{eq:tau-i} degenerates. Indeed, when $\gamma=0$, nothing in the triggering rule relates $\normQ{q(t_{k+1})}$ to $m(t_{k+1})$, and $q(t_{k+1})$ may be much larger than $m(t_{k+1})$ if $m$ has decayed faster than the trajectory.\\ 
	Case (ii) replaces the local comparison by a global one, based on the exponential decay of $\zhat$ at the exact rate $\beta$ of the lower bound \eqref{H5}. This is why the dwell-time then depends on the data. 
\end{remark}

\paragraph{Threshold reduced to the internal variable ($\gamma=0$)}

By \cref{thm:dw-bounded}(ii), a threshold reduced to $m$ yields a dwell-time along each trajectory, which depends on the data through $C^*_\beta c_m^{-1}$ and $\normH{e_0}c_m^{-1}$. The two results below show that this dependence can be removed by normalizing $m$ with the data, and that it cannot be removed otherwise.

\begin{corollary}[Normalized internal variable, $\gamma=0$]\label{cor:dw-normalised}
	Let \cref{A:gen,A:BK,A:Kstab,A:CL,A:Lstab,A:BSD} hold and consider the threshold $\Theta=m$ with the normalized internal variable (E4),
	\begin{equation*}
		m(t)=m_0e^{-\beta t}N_0 \quad \text{where } m_0>0, \quad 0<\beta\le\alpha_L, \quad \beta<\alpha_K.
	\end{equation*} Set
	\begin{equation}\label{defcbb}
	\bar C_\beta\coloneqq M_K\Big[1+\frac{M_L\norm{LC}}{\alpha_K-\beta}+\frac{\norm{D}m_0}{\alpha_K-\beta}\Big],
	\end{equation}
	and 
	\begin{equation}\label{eq:tau-iii}
		\tau_{\mathrm N}\coloneqq\min\Big\{1,\ \frac{m_0\,e^{-\beta-\norm{\Acal_Q}}}{\norm{Q}\big[(\norm{\Acal_Q}+\norm{QD})\,\bar C_\beta+\norm{LC}M_L\big]}\Big\}.
	\end{equation}
	Then, for every initial datum $(\zhat_0,e_0)$ with $N_0=\normH{\zhat_0}+\normH{e_0}>0$, it holds
	\begin{equation}\label{eq:normalised-stab}
		\normH{\zhat(t)}\le\bar C_\beta N_0e^{-\beta t}\quad \text{for every } t\ge0,
	\end{equation}
	and $t_{k+1}-t_k\ge\tau_{\mathrm N}$ for every admissible $k$.\\ 
	The dwell-time $\tau_{\mathrm N}$ is independent of the initial datum. (If $N_0=0$, then $\zhat\equiv0$ and no event occurs.)
\end{corollary}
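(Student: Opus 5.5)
The plan is to obtain the corollary as a direct specialization of \cref{cor:rate} and \cref{thm:dw-bounded}(ii), with $\gamma=0$ and the normalized variable (E4). The point is that every data-dependent quantity in \eqref{eq:tau-ii} is homogeneous of degree one in $N_0$, so the ratio is independent of the datum. First we dispose of the trivial case $N_0=0$. Then $\zhat_0=e_0=0$, so $e\equiv0$ and $m\equiv0$. The internal variable then fails \eqref{H2}, but the frozen continuation from $(0,0)$ is identically zero. We adopt the convention that no event occurs, and $\zhat\equiv0$. From now on we assume $N_0>0$.

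\emph{Step 1 (stability estimate \eqref{eq:normalised-stab}).} By \cref{prop:examples}(E4), $m$ satisfies \eqref{H1}--\eqref{H3} and $J_\beta[m]\le m_0N_0(\alpha_K-\beta)^{-1}$, since $\beta<\alpha_K$. We apply \cref{cor:rate} with $\lambda=\beta$ and $\gamma=0$, so $\kappa_0=0$ and the prefactor equals $1$. The hypotheses $\beta\le\alpha_L$ and $\beta<\alpha_K$ are exactly those assumed. This gives
\[
C_\beta^*[\zhat]\le M_K\normH{\zhat_0}+\frac{M_KM_L\norm{LC}}{\alpha_K-\beta}\normH{e_0}+\frac{M_K\norm{D}m_0N_0}{\alpha_K-\beta}\le \bar C_\beta N_0,
\]
where the last step bounds each of $\normH{\zhat_0}$ and $\normH{e_0}$ by $N_0$. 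This is \eqref{eq:normalised-stab}.

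\emph{Step 2 (dwell-time).} We invoke \cref{thm:dw-bounded}(ii). Its hypotheses are \cref{A:BSD}, \cref{A:Kstab,A:Lstab}, \eqref{H5} with $c_m=m_0N_0$ and rate $\beta\le\alpha_L$, $\beta<\alpha_K$, and $J_\beta[m]<\infty$. All of these hold by (E4) and Step 1. Inserting $C_\beta^*[\zhat]\le\bar C_\beta N_0$ and $\normH{e_0}\le N_0$ into \eqref{eq:tau-ii} shows that the denominator is at most $\norm{Q}\big[(\norm{\Acal_Q}+\norm{QD})\bar C_\beta+\norm{LC}M_L\big]N_0$. The numerator is $m_0N_0e^{-\beta-\norm{\Acal_Q}}$. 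The factor $N_0$ cancels, so the $\tau_0$ of \eqref{eq:tau-ii} is bounded below by $\tau_{\mathrm N}$. Hence $t_{k+1}-t_k\ge\tau_0\ge\tau_{\mathrm N}$ for every admissible $k$.

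The argument is essentially bookkeeping. The only delicate point is the monotonicity used in Step 2: the lower bound \eqref{eq:tau-ii} must be replaceable by a smaller quantity. This works because enlarging the denominator only decreases the bound, and $\min\{1,\cdot\}$ is monotone. We also check that Theorem~\ref{thm:dw-bounded}(ii) uses $C_\beta^*$ only as an upper bound on $e^{\beta t}\normH{\zhat(t)}$, so replacing it by $\bar C_\beta N_0$ in its proof is legitimate.
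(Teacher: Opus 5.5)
Your proposal is correct and follows the paper's proof essentially step for step. You apply \cref{cor:rate} with $\lambda=\beta$ and $\gamma=0$ (so $\kappa_0=0$) to get $C^*_\beta\le\bar C_\beta N_0$, and then \cref{thm:dw-bounded}(ii) with $c_m=m_0N_0$, whose bound \eqref{eq:tau-ii} is non-increasing in $C^*_\beta$ and $\normH{e_0}$, so that the factor $N_0$ cancels. The only small difference is the case $N_0=0$: no convention is needed there, since $q\equiv0$ and $\Theta\equiv0$ make the inequality in \eqref{S2} hold for every $\tau$, so $t_1=\infty$ follows directly from the rule.
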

\begin{proof}
	By \cref{prop:examples}(E4), $m$ satisfies \eqref{H1}--\eqref{H3}, \eqref{H5} with $c_m=m_0N_0$ and the rate $\beta$, and $J_\beta[m]\le m_0N_0(\alpha_K-\beta)^{-1}$. \Cref{cor:rate} with $\gamma=0$ (so $\kappa_0=0$) and $\lambda=\beta$, which is allowed since $\beta\le\alpha_L$ and $\beta<\alpha_K$, gives
	\[
	C^*_\beta=\sup_{t\ge0}e^{\beta t}\normH{\zhat(t)}\le M_K\normH{\zhat_0}+\frac{M_KM_L\norm{LC}}{\alpha_K-\beta}\normH{e_0}+\frac{M_K\norm{D}m_0N_0}{\alpha_K-\beta}\le\bar C_\beta N_0,
	\]
	since $\normH{\zhat_0},\normH{e_0}\le N_0$. This is \eqref{eq:normalised-stab}.\\ 
	\Cref{thm:dw-bounded}(ii) applies, and its dwell-time \eqref{eq:tau-ii} is a non-increasing function of $C^*_\beta$ and of $\normH{e_0}$. 
	Using $c_m=m_0N_0$, $C^*_\beta\le\bar C_\beta N_0$ and $\normH{e_0}\le N_0$, we deduce
	\[
	\frac{c_m\,e^{-\beta-\norm{\Acal_Q}}}{\norm{Q}\big[(\norm{\Acal_Q}+\norm{QD})C^*_\beta+\norm{LC}M_L\normH{e_0}\big]}\ \ge\ \frac{m_0N_0\,e^{-\beta-\norm{\Acal_Q}}}{\norm{Q}\big[(\norm{\Acal_Q}+\norm{QD})\bar C_\beta+\norm{LC}M_L\big]N_0}.
	\]
	Hence \eqref{eq:tau-ii} is bounded below by $\tau_{\mathrm N}$, which is therefore also a dwell-time.
\end{proof}

\begin{proposition}[Without normalization, no dwell-time independent of the data when $\gamma=0$]\label{prop:gamma0-nec}
	Let \cref{A:gen,A:BK,A:CL,A:BSD} hold and consider the threshold $\Theta=m$, where $m$ satisfies \eqref{H1}--\eqref{H2} and is the same function of time for all initial data with $e_0=0$ (this is in particular the case for (E1), (E2) and (E3)).\\ 
	Let $\zhat_0\in\Hcal$ be such that $v\coloneqq(\Acal_Q+QD)Q\zhat_0\ne0$, and set $\bar m\coloneqq\max_{[0,1]}m$,
	\[
	c\coloneqq\tfrac12e^{\norm{\Acal_Q}}\norm{\Acal_Q}\big(\norm{\Acal_Q}+\norm{QD}\big)\normQ{Q\zhat_0},\qquad t_*\coloneqq\min\Big\{1,\frac{\normQ{v}}{2c}\Big\}\quad(t_*=1\text{ if }c=0).
	\]
	Then, for the data $(R\zhat_0,0)$ with $R>2\bar m(t_*\normQ{v})^{-1}$, the first event satisfies
	\[
	t_1\le\frac{2\bar m}{R\normQ{v}} \to0 \quad \text{as } R\to\infty .
	\]
	In particular, no internal variable of this type yields a dwell-time independent of the initial datum. 
\end{proposition}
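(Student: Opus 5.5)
With $e_0=0$ we have $e\equiv0$, so $LCe\equiv0$. Along the frozen continuation from $(0,R\zhat_0)$, \cref{lem:increment} and \cref{A:BSD} give the exact identity
\[
q(h)-q(0)=R\Big[(e^{h\Acal_Q}-I)Q\zhat_0+\int_0^he^{(h-r)\Acal_Q}QDQ\zhat_0\,dr\Big],\qquad 0\le h<t_1 .
\]
This is the computation in the proof of \cref{lem:incr-bdd}. Since $e_0=0$ for every $R$, the hypothesis on $m$ says that the threshold $\Theta=m$ is the same function of time for all $R$, and it is bounded by $\bar m$ on $[0,1]$. The plan is to show that the increment grows linearly in $h$, with slope $R\normQ{v}$, up to a quadratic remainder. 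It must then exceed $\bar m$ before time $2\bar m/(R\normQ{v})$, and since $\Theta=m\le\bar m$ on $[0,1]$, this forces the first event to occur before that time.

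\emph{Step 1 (first-order expansion).} I will write the bracket as $hv+r(h)$, where
\[
r(h)=\big(e^{h\Acal_Q}-I-h\Acal_Q\big)Q\zhat_0+\int_0^h\big(e^{(h-r)\Acal_Q}-I\big)QDQ\zhat_0\,dr .
\]
Taylor's formula gives $\norm{e^{h\Acal_Q}-I-h\Acal_Q}\le\tfrac12h^2\norm{\Acal_Q}^2e^{h\norm{\Acal_Q}}$. By \cref{lem:bounded-incr}, $\norm{e^{s\Acal_Q}-I}\le s\norm{\Acal_Q}e^{s\norm{\Acal_Q}}$, and integrating over $s\in[0,h]$ bounds the second term by $\tfrac12h^2\norm{\Acal_Q}e^{h\norm{\Acal_Q}}\norm{QD}\normQ{Q\zhat_0}$. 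Adding the two bounds, for $h\le1$ I get $\normQ{r(h)}\le c\,h^2$, with exactly the constant $c$ of the statement.

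\emph{Step 2 (lower bound and conclusion).} For $h\le t_*$ we have $ch^2\le\tfrac12 h\normQ{v}$, so $\normQ{q(h)-q(0)}\ge \tfrac12Rh\normQ{v}$. Set $h_R\coloneqq2\bar m/(R\normQ{v})$; the condition on $R$ is exactly $h_R<t_*\le1$. Suppose, for contradiction, that $t_1>h_R$. Then \eqref{S2} gives $\normQ{q(\tau)-q(0)}\le m(\tau)\le\bar m$ for all $\tau\in[0,h_R]$. The endpoint is included either by continuity or because the interval $[0,t_1)$ contains it. On the other hand, $\normQ{q(h)-q(0)}>\bar m$ for every $h\in(h_R,t_*]$. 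This contradicts \eqref{S2} for $\tau$ slightly larger than $h_R$, provided $t_1>\tau$. Handling this requires only a little care: either use the contact identity \eqref{eq:contact}, or argue directly that the set in \eqref{S2} cannot contain any $t>h_R$. Either way, $t_1\le h_R$.

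Finally, letting $R\to\infty$ gives $t_1-t_0\to0$ along data in $\Hcal\times\Hcal$, so no $\tau_0>0$ can serve as a dwell-time independent of the datum. The only delicate points are the Taylor constant in Step 1 and the endpoint bookkeeping in Step 2; neither is a real obstacle.
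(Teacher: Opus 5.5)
Your proposal is correct and follows essentially the same argument as the paper: with $e\equiv0$ you use linearity in $R$, expand the monitored increment as $hv$ plus a remainder bounded by $ch^2$ via the same two Taylor estimates, and compare $\tfrac12Rh\normQ{v}$ with $\bar m\ge m$ on $(h_R,t_*]$ to get $t_1\le 2\bar m/(R\normQ{v})$. For the endpoint bookkeeping, the paper notes that the condition in \eqref{S2} fails at every $t\in(\tau_R,t_*]$ and lets $t\downarrow\tau_R$; this is equivalent to your choice of $\tau\in(h_R,\min\{t_1,t_*\})$.
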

\begin{proof}
	Let $R>0$ and consider the data $(R\zhat_0,0)$, so that $e\equiv0$ and $m$ does not depend on $R$. The frozen continuation from $(0,R\zhat_0)$ is $W_{0,R\zhat_0}=RW_{0,\zhat_0}$ by linearity. As in the proof of \cref{lem:incr-bdd} (with $e\equiv0$), $QW_{0,\zhat_0}(t)-Q\zhat_0=\varphi(t)$, where
	\[
	\varphi(t)\coloneqq\big(e^{t\Acal_Q}-I\big)q_0+\int_0^te^{(t-r)\Acal_Q}QD\,q_0\,dr,\qquad q_0\coloneqq Q\zhat_0 .
	\]
	Write $G\coloneqq\Acal_Q$. Since 
	\begin{equation*}
		\varphi(t)-tv=(e^{tG}-I-tG)q_0+\int_0^t(e^{(t-r)G}-I)QDq_0\,dr,
	\end{equation*}
	and using
	\begin{equation*}
		\norm{e^{sG}-I-sG}\le\sum_{n\ge2}\frac{(s\norm G)^n}{n!}\le\frac12(s\norm G)^2e^{s\norm G} \quad \text{for every }s\ge0,
	\end{equation*} 
	as well as $\norm{e^{sG}-I}\le s\norm Ge^{s\norm G}$ (\cref{lem:bounded-incr}), we obtain for $t\in[0,1]$,
	\[
	\normQ{\varphi(t)-tv}\le\tfrac12t^2e^{\norm G}\norm G^2\normQ{q_0}+\tfrac12t^2e^{\norm G}\norm G\norm{QD}\normQ{q_0}=ct^2 .
	\]
	Hence we deduce (by definition of $t_*$)
	\begin{equation*}
		\normQ{\varphi(t)}\ge t\normQ v-ct^2\ge\frac12t\normQ v \quad \text{for }t\in[0,t_*].
	\end{equation*}
	Let $R>2\bar m(t_*\normQ v)^{-1}$, so that $\tau_R\coloneqq2\bar m(R\normQ v)^{-1}<t_*$. For every $t\in(\tau_R,t_*]$, it holds
	\[
	\normQ{Q\big(W_{0,R\zhat_0}(t)-R\zhat_0\big)}=R\normQ{\varphi(t)}\ge\tfrac12Rt\normQ v>\bar m\ge m(t)=\Theta_{0,R\zhat_0}(t),
	\]
	so that the condition in \eqref{S2} fails at $t$, and $t_1\le t$.\\ 
	Letting $t\downarrow\tau_R$ gives $t_1\le\tau_R$. 
\end{proof}

\begin{remark}[Interpretation]\label{rem:gamma0}
	For $e_0=0$, scaling the initial datum by $R$ divides the
	effective threshold by $R$ in \cref{prop:gamma0-nec}.
	When $\gamma>0$, the relative part scales with the data,
	allowing (E3) to yield a data-independent dwell-time.
	When $\gamma=0$, the internal variable must account for
	the size of the data. (E4) does so, but requires information
	on $\normH{e_0}$ (\cref{rem:implement}).
	The obstruction requires no stability assumption, and also
	applies to our future examples \cref{cor:dw-finite,cor:dw-spectral}.
\end{remark}

\paragraph{Finite-dimensional systems}

\begin{corollary}[Finite dimension]\label{cor:dw-finite}
	Let $\Hcal=\R^n$ (or let $\Acal\in\Lcal(\Hcal)$), $Q=I$, $D=BK$, and let \cref{A:BK,A:CL} hold.
	\begin{enumerate}[label=\textup{(\alph*)}]
		\item If $\gamma>0$ and $m$ satisfies \eqref{H1}, \eqref{H2}, \eqref{H4}, then
		\begin{equation}\label{eq:tau-i_finite}
			\tau_0=\min\left\{1,\frac{e^{-\norm{\Acal}}}{(\norm{\Acal}+\norm{BK})(1+\gamma^{-1})+C_e}\right\}
		\end{equation} is a dwell-time.\\ 
		For the supremum output residual filter (E3), $C_e=e^\nu\eta^{-1}$ and $\tau_0$ is independent of the initial datum.\\
		If moreover \cref{A:Kstab,A:Lstab} hold and
		\begin{equation*}
			M_K\norm{BK}\gamma<\alpha_K-\lambda \quad \text{with }0<\lambda\le\min\{\alpha_L,\mu,\nu\}, \quad \lambda<\alpha_K,
		\end{equation*}
		the closed loop is also exponentially stable at rate $\lambda$ (\cref{thm:stab}).\\
		\item If $\gamma=0$, \cref{A:Kstab,A:Lstab} hold, and $m$ satisfies \eqref{H5} with $\beta\le\alpha_L$, $\beta<\alpha_K$, $J_\beta[m]<\infty$, then
		\begin{equation}\label{eq:tau-ii_finite}
			\tau_0=\min\Big\{1,\ \frac{c_m\,e^{-\beta-\norm{\Acal}}}{\big[(\norm{\Acal}+\norm{BK})\,C_\beta^*[\widehat z]+\norm{LC}M_L\normH{e_0}\big]}\Big\},
		\end{equation}
		where $C_\beta^*[\widehat z]$ is defined in \eqref{defcb}, is a dwell-time along each trajectory.
		\item If $\gamma=0$, \cref{A:Kstab,A:Lstab} hold and $m$ is the normalized internal variable (E4) with $\beta\le\alpha_L$, $\beta<\alpha_K$, then 
		\begin{equation}\label{eq:tau-iii_finite}
			\tau_{\mathrm N}\coloneqq\min\Big\{1,\ \frac{m_0\,e^{-\beta-\norm{\Acal}}}{\big[(\norm{\Acal}+\norm{BK})\,\bar C_\beta+\norm{LC}M_L\big]}\Big\}
		\end{equation}
		where $\bar C_\beta$ is defined in \eqref{defcbb} is a dwell-time independent of the initial datum, and exponential stability holds (see \eqref{eq:normalised-stab}).\\
		For (E1), (E2), (E3), no dwell-time independent of the data exists when $\gamma=0$ (\cref{prop:gamma0-nec}).
	\end{enumerate}
\end{corollary}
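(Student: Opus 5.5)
This corollary is a specialization of the results already proved for bounded sampled dynamics, so the plan is to verify the hypotheses in the finite-dimensional setting and then read off the constants. First I check \cref{A:BSD}. Take $\HQ=\Hcal$, $Q=I$ and $D=BK$; then \eqref{eq:factor} holds trivially. Since $\Acal\in\Lcal(\Hcal)$ (automatic for $\Hcal=\R^n$), the choice $\Acal_Q\coloneqq\Acal$ gives $Qe^{t\Acal}=e^{t\Acal_Q}Q$. \Cref{A:gen} also holds, because a bounded operator generates the uniformly continuous semigroup $e^{t\Acal}$. With these choices we have $\norm{Q}=1$, $\norm{QD}=\norm{BK}$ and $\norm{\Acal_Q}=\norm{\Acal}$.

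For (a), I apply \cref{thm:dw-bounded}(i) and substitute these norms into \eqref{eq:tau-i}; this gives exactly \eqref{eq:tau-i_finite}. For (E3), \cref{prop:examples} gives \eqref{H1}, \eqref{H2} and \eqref{H4} with $C_e=e^\nu\eta^{-1}$. This value of $C_e$ depends only on the design parameters, so $\tau_0$ is independent of the initial datum. For the stability part, \cref{prop:examples}(E3) also gives \eqref{H3}, with decay rate $\min\{\mu,\nu,\alpha_L\}\ge\lambda$. The small-gain condition $\kappa_\gamma=M_K\norm{BK}\gamma<\alpha_K-\lambda$ is exactly \eqref{eq:smallgain}, since $\norm{D}\norm{Q}=\norm{BK}$. \Cref{thm:stab} then applies; I would use the rate $\min\{\mu,\nu,\alpha_L\}$ in place of $\mu$ in \eqref{H3}. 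This is the only point that needs care. It is also why $\nu$ appears in the constraint on $\lambda$: \eqref{H3} must be read with the rate actually available for (E3).

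For (b), I apply \cref{thm:dw-bounded}(ii) verbatim and substitute $\norm{Q}=1$, $\norm{\Acal_Q}=\norm{\Acal}$ and $\norm{QD}=\norm{BK}$ into \eqref{eq:tau-ii}; this yields \eqref{eq:tau-ii_finite}. \Cref{thm:dw-bounded}(ii) says that $\tau_0$ depends on the datum only through $C_\beta^*[\zhat]$ and $\normH{e_0}$, which is why we only obtain a dwell-time along each trajectory.

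For (c), I apply \cref{cor:dw-normalised} with the same substitutions; \eqref{eq:tau-iii} becomes \eqref{eq:tau-iii_finite}, and \eqref{eq:normalised-stab} gives the exponential stability. The final claim, that (E1)–(E3) admit no data-independent dwell-time when $\gamma=0$, follows from \cref{prop:gamma0-nec}. To use it I need some $\zhat_0$ with $(\Acal+BK)\zhat_0\ne0$. Any nonzero $\zhat_0\notin\ker(\Acal+BK)$ works, and such a vector exists unless $\Acal+BK=0$. That degenerate case contradicts \cref{A:Kstab} (it would give $T_K\equiv I$, which is not exponentially stable), and \cref{A:Kstab} is assumed in (c).
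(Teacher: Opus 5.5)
Your proposal is correct and follows essentially the same route as the paper: check \cref{A:BSD} with $\Acal_Q=\Acal$, substitute $\norm{Q}=1$ and $QD=D=BK$ into \cref{thm:dw-bounded} and \cref{cor:dw-normalised}, use the rate $\min\{\mu,\nu,\alpha_L\}$ of (E3) in \eqref{H3} to invoke \cref{thm:stab}, and conclude with \cref{prop:gamma0-nec}. The only difference is cosmetic: the paper gets a $\zhat_0$ with $(\Acal+BK)\zhat_0\neq0$ from the invertibility of the Hurwitz matrix $\Acal+BK$, while you only use $\Acal+BK\neq0$ (forced by \cref{A:Kstab}), which is equally sufficient.
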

\begin{proof}
	\Cref{A:BSD} holds with $\Acal_Q=\Acal$.\\ 
	To show (a) and (b), we apply \cref{thm:dw-bounded} and \cref{prop:examples}(E3), whose rate in \eqref{H3} is $\min\{\mu,\nu,\alpha_L\}$. \eqref{eq:tau-i_finite} and \eqref{eq:tau-ii_finite} are respectively \eqref{eq:tau-i} and \eqref{eq:tau-ii} with $\Acal_Q=\Acal$, $QD=BK$, $\norm{Q}=1$.\\ 
	To show (c), we apply \cref{cor:dw-normalised,prop:gamma0-nec} and notice that $\Acal+BK$ is invertible since it is Hurwitz. \eqref{eq:tau-iii_finite} is \eqref{eq:tau-iii} with $\Acal_Q=\Acal$, $QD=D=BK$, $\norm{Q}=1$.
\end{proof}

\paragraph{Systems with a finite-dimensional unstable part}

\begin{assumption}[Finite-dimensional unstable part]\label{A:split}
	There exist closed subspaces $\Hu,\Hs\subset\Hcal$ with
	\begin{equation*}
		\Hcal=\Hu\oplus\Hs, \qquad \dim\Hu=N<\infty,
	\end{equation*}
	such that the associated projections $\Pu\in\Lcal(\Hcal)$ (onto $\Hu$ along $\Hs$) and $\Ps=I-\Pu$ commute with $e^{t\Acal}$ for every $t\ge0$.\\ 
	The semigroup $e^{t\As}\coloneqq e^{t\Acal}|_{\Hs}$ is exponentially stable, i.e.
	\begin{equation*}
		\norm{e^{t\As}}_{\Lcal(\Hs)}\le M_se^{-\alpha_st} \quad \text{for some } M_s\ge1, \quad \alpha_s>0.
	\end{equation*}
\end{assumption}

Under \cref{A:split}, $e^{t\Acal}|_{\Hu}$ is a $C_0$-semigroup on the finite-dimensional space $\Hu$, hence of the form $e^{t\Au}$ with $\Au\in\Lcal(\Hu)$. Moreover $\Hu\subset\Dcal(\Acal)$ and $\Acal|_{\Hu}=\Au$.\\ 
The typical example is the Riesz projector defined by
\begin{equation*}
	\Pu=\frac1{2\pi i}\oint_\Gamma(\lambda-\Acal)^{-1}\,d\lambda,
\end{equation*}
associated with finitely many isolated eigenvalues of finite algebraic multiplicity enclosed by a contour $\Gamma$. No further spectral structure is needed.\\
We set $B_u\coloneqq\Pu B\in\Lcal(\Ucal,\Hu)$, $B_s\coloneqq\Ps B$.

It belongs to the folklore (see \cite{BDDM92}) that under assumption \ref{A:split}, one can stabilize the finite-dimensional system $z_u'=A_uz_u+B_u u$ with a feedback operator $K_u\in\Lcal(\Hu,\Ucal)$. In the following, we assume that $K$ is given as follows:
\begin{assumption}[Control of the unstable part]\label{A:Ku}
	$K=K_u\Pu$ with $K_u\in\Lcal(\Hu,\Ucal)$ such that
	\begin{equation*}
		\norm{e^{t(\Au+B_uK_u)}}_{\Lcal(\Hu)}\le M_ue^{-\alpha_ut} \quad \text{for some } M_u\ge1, \quad \alpha_u>0.
	\end{equation*}
\end{assumption}

\begin{lemma}\label{lem:split-stab}
	Under \cref{A:gen,A:BK,A:split,A:Ku}, \cref{A:Kstab} holds.
\end{lemma}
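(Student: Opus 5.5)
The idea is to decompose $\AK=\Acal+BK_u\Pu$ along the splitting $\Hcal=\Hu\oplus\Hs$ and show that, in these coordinates, the closed-loop semigroup is block upper-triangular (in a suitable order) with exponentially stable diagonal blocks.

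Since $\Pu$ commutes with $e^{t\Acal}$, it commutes with $\Acal$ on $\Dcal(\Acal)$, and $\Hu\subset\Dcal(\Acal)$. Write $z=z_u+z_s$ with $z_u=\Pu z$ and $z_s=\Ps z$. For the mild solution $z(t)=T_K(t)z_0$, \cref{lem:change-gen} (with $P=-BK$) gives
\[
z(t)=e^{t\Acal}z_0+\int_0^te^{(t-r)\Acal}BK_u\Pu z(r)\,dr .
\]
Applying $\Pu$ and $\Ps$, and using the commutation, yields
\begin{align*}
z_u(t)&=e^{t\Au}z_{u}(0)+\int_0^te^{(t-r)\Au}B_uK_uz_u(r)\,dr,\\
z_s(t)&=e^{t\As}z_{s}(0)+\int_0^te^{(t-r)\As}B_sK_uz_u(r)\,dr .
\end{align*}

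The first equation is the mild (here classical, since it is finite-dimensional) form of $\dot z_u=(\Au+B_uK_u)z_u$. Hence $z_u(t)=e^{t(\Au+B_uK_u)}\Pu z_0$, which by \cref{A:Ku} gives
\[
\normH{z_u(t)}\le M_u\norm{\Pu}e^{-\alpha_ut}\normH{z_0}.
\]
Equivalently, one can apply \cref{lem:change-gen} on $\Hu$ with perturbation $B_uK_u$ to avoid any regularity discussion.

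Next, insert this bound into the $z_s$ equation. The estimate from \cref{A:split}, $\norm{e^{t\As}}\le M_se^{-\alpha_st}$, combined with \cref{lem:conv-exp} (take $\lambda<\min\{\alpha_s,\alpha_u\}$, with $\alpha=\alpha_s$ and $\nu=\alpha_u$, or symmetrically), gives
\[
\normH{z_s(t)}\le M_s\norm{\Ps}e^{-\alpha_st}\normH{z_0}+\frac{M_sM_u\norm{B_s}\norm{K_u}\norm{\Pu}}{\alpha_s-\lambda}e^{-\lambda t}\normH{z_0}.
\]
Summing the two estimates gives $\norm{T_K(t)}\le M_Ke^{-\alpha_Kt}$ for any $\alpha_K<\min\{\alpha_u,\alpha_s\}$, with an explicit $M_K\ge1$. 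Together with generation, which follows from the bounded perturbation theorem, this establishes \cref{A:Kstab}.

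The only delicate point is justifying that projecting the mild formula is legitimate. This requires that $\Pu$ and $\Ps$ commute with $e^{t\Acal}$, which is assumed, and that they pass through Bochner integrals, which holds because they are bounded. One should also note that $\Ps$ maps into $\Hs$, so that $e^{(t-r)\Acal}\Ps=e^{(t-r)\As}\Ps$. I expect no real obstacle beyond this bookkeeping and the choice of the rate in \cref{lem:conv-exp}, which needs $\lambda<\alpha_s$ and $\lambda\le\alpha_u$.
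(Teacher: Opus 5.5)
Your proof is correct and follows the paper's argument. You rewrite $T_K(t)z_0$ through \cref{lem:change-gen}, project onto $\Hu$ and $\Hs$, identify $z_u$ with the exponentially stable finite-dimensional closed loop of \cref{A:Ku}, and feed its decay into the convolution for $z_s$; the only cosmetic difference is that you bound that convolution with \cref{lem:conv-exp}, whereas the paper uses $te^{-\alpha t}\le(e\varepsilon)^{-1}e^{-(\alpha-\varepsilon)t}$ with $\alpha=\min\{\alpha_u,\alpha_s\}$.
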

\begin{proof}
	Let $z_0\in\Hcal$ and $z(t)=T_K(t)z_0$.\\ 
	By \cref{lem:change-gen} (applied to $\AK$ and the perturbation $-BK$), it holds
	\begin{equation*}
		z(t)=e^{t\Acal}z_0+\int_0^te^{(t-s)\Acal}BK_u\Pu z(s)\,ds.
	\end{equation*}
	Applying $\Pu$ and $\Ps$, which commute with $e^{t\Acal}$, and writing $z_u=\Pu x$, $z_s=\Ps x$, we obtain
	\[
	z_u(t)=e^{t\Au}z_u(0)+\int_0^te^{(t-s)\Au}B_uK_uz_u(s)\,ds,\quad z_s(t)=e^{t\As}z_s(0)+\int_0^te^{(t-s)\As}B_sK_uz_u(s)\,ds .
	\]
	The first identity says that $z_u$ is the mild, hence classical, solution of $\dot z_u=(\Au+B_uK_u)z_u$ in the finite-dimensional space $\Hu$ (\cref{lem:change-gen}), so that
	\begin{equation*}
		\normH{z_u(t)}\le M_ue^{-\alpha_ut}\norm{\Pu}\normH{z_0}.
	\end{equation*} 
	The second gives
	\begin{equation*}
		\normH{z_s(t)}\le M_se^{-\alpha_st}\norm{\Ps}\normH{z_0}+M_sM_u\norm{B_sK_u}\norm{\Pu}\normH{z_0}\int_0^te^{-\alpha_s(t-s)}e^{-\alpha_us}\,ds.
	\end{equation*}
	With $\alpha\coloneqq\min\{\alpha_u,\alpha_s\}$, the last integral is at most $te^{-\alpha t}\le(e\varepsilon)^{-1}e^{-(\alpha-\varepsilon)t}$ for every $\varepsilon\in(0,\alpha)$.\\ 
	Hence $\norm{T_K(t)}\le M_\varepsilon e^{-(\alpha-\varepsilon)t}$ for a constant $M_\varepsilon$.
\end{proof}

\begin{corollary}[Finite-dimensional unstable part]\label{cor:dw-spectral}
	Let \cref{A:gen,A:BK,A:CL,A:split,A:Ku} hold, and monitor the unstable component only, i.e.
	\begin{equation*}
		\HQ=\Hu, \quad Q=\Pu, \quad D=BK_u\in\Lcal(\Hu,\Hcal),
	\end{equation*}
	so that
	\begin{equation*}
		q=\Pu\zhat, \quad BK=DQ, \quad QD=B_uK_u.
	\end{equation*} 
	Then \cref{A:BSD} holds with $\Acal_Q=\Au$, and the following holds.
	\begin{enumerate}[label=\textup{(\alph*)}]
		\item If $\gamma>0$ and $m$ satisfies \eqref{H1}, \eqref{H2}, \eqref{H4}, then
		\begin{equation*}
			\tau_0=\min\left\{1,\frac{e^{-\norm{\Au}}}{(\norm{\Au}+\norm{B_uK_u})(1+\gamma^{-1})+\norm{\Pu}C_e}\right\},
		\end{equation*} is a dwell-time.
		\item For the supremum output residual filter (E3), this dwell-time is independent of the initial datum. Moreover, if \cref{A:Lstab} holds and
		\begin{equation*}
			\kappa_\gamma=M_K\norm{BK_u}\norm{\Pu}\gamma<\alpha_K-\lambda \quad \text{with } 0<\lambda\le\min\{\alpha_L,\mu,\nu\}, \quad \lambda<\alpha_K,
		\end{equation*}
		then the closed loop is exponentially stable at rate $\lambda$.
		\item If $\gamma=0$, \cref{A:Lstab} holds, and $m$ satisfies \eqref{H5} with $\beta\le\alpha_L$, $\beta<\alpha_K$ and $J_\beta[m]<\infty$, then
		\[
		\tau_0=\min\Big\{1,\ \frac{c_m\,e^{-\beta-\norm{\Au}}}{\norm{\Pu}\big[(\norm{\Au}+\norm{B_uK_u})\,C^*_\beta+\norm{LC}M_L\normH{e_0}\big]}\Big\},
		\]
		with $C^*_\beta=\sup_{t\ge0}e^{\beta t}\normH{\zhat(t)}<\infty$,
		is a dwell-time along each trajectory. \\
		If moreover $m$ satisfies \eqref{H3}, the closed loop is exponentially stable at every rate $\lambda$ with $0<\lambda\le\min\{\alpha_L,\mu\}$ and $\lambda<\alpha_K$, without any small-gain condition.
		\item If $\gamma=0$, \cref{A:Lstab} holds, and $m$ is the normalized internal variable (E4) with $\beta\le\alpha_L$, $\beta<\alpha_K$, then, with 
		\begin{gather*}
			\bar C_\beta=M_K\Big[1+\frac{M_L\norm{LC}}{\alpha_K-\beta}+\frac{\norm{BK_u}m_0}{\alpha_K-\beta}\Big],\\
			\tau_{\mathrm N}=\min\Big\{1,\ \frac{m_0\,e^{-\beta-\norm{\Au}}}{\norm{\Pu}\big[(\norm{\Au}+\norm{B_uK_u})\,\bar C_\beta+\norm{LC}M_L\big]}\Big\},
		\end{gather*}
		is a dwell-time independent of the initial datum, and exponential stability holds (see \eqref{eq:normalised-stab}).\\ 
		For (E1), (E2), (E3) and $\gamma=0$, no dwell-time independent of the data exists (\cref{prop:gamma0-nec}, which applies since $\Au+B_uK_u$ is Hurwitz).
	\end{enumerate}
	No regularity of the initial datum is needed.
\end{corollary}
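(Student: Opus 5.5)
The plan is to reduce \cref{cor:dw-spectral} to \cref{thm:dw-bounded}, \cref{thm:stab}, \cref{cor:dw-normalised} and \cref{prop:gamma0-nec}. We first check that the choice $\HQ=\Hu$, $Q=\Pu$, $D=BK_u$ fits the abstract framework. The factorization \eqref{eq:factor} holds by \cref{A:Ku}: $BK=BK_u\Pu=DQ$. Moreover $QD=\Pu BK_u=B_uK_u$, viewed as an operator on $\Hu$. Since $\Pu$ commutes with $e^{t\Acal}$ and $e^{t\Acal}|_{\Hu}=e^{t\Au}$, we get
\[
Qe^{t\Acal}=\Pu e^{t\Acal}=e^{t\Acal}\Pu=e^{t\Au}\Pu=e^{t\Au}Q .
\]
This is \cref{A:BSD} with $\Acal_Q=\Au\in\Lcal(\Hu)$, bounded because $\dim\Hu<\infty$. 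By \cref{lem:split-stab}, \cref{A:Kstab} holds automatically. This is why stability statements need only \cref{A:Lstab} in addition.

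For (a), apply \cref{thm:dw-bounded}(i) with $\norm{\Acal_Q}=\norm{\Au}$, $\norm{QD}=\norm{B_uK_u}$ and $\norm{Q}=\norm{\Pu}$. This gives exactly the displayed $\tau_0$.

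For (b), \cref{prop:examples}(E3) gives $C_e=e^{\nu}\eta^{-1}$, independent of the data, so $\tau_0$ is data-independent. For the stability claim, use \cref{thm:stab}: (E3) satisfies \eqref{H1}--\eqref{H3} with rate $\min\{\mu,\nu,\alpha_L\}\ge\lambda$, and the small-gain constant is $\kappa_\gamma=M_K\norm{D}\norm{Q}\gamma=M_K\norm{BK_u}\norm{\Pu}\gamma$. One point needs care. \eqref{H3} in \cref{thm:stab} is stated with a rate $\mu$, which here should be read as $\min\{\mu,\nu,\alpha_L\}$, and the condition $\lambda\le\min\{\alpha_L,\mu,\nu\}$ matches this.

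For (c), apply \cref{thm:dw-bounded}(ii) with the same substitutions. Finiteness of $C^*_\beta$ follows from \cref{cor:rate} with $\gamma=0$ and $\lambda=\beta$, which uses $J_\beta[m]<\infty$. For the stability claim, \cref{thm:stab} with $\gamma=0$ has $\kappa_0=0$, so the small-gain condition is vacuous.

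For (d), invoke \cref{cor:dw-normalised} with the same substitutions, and note $\norm{D}=\norm{BK_u}$. The negative statement comes from \cref{prop:gamma0-nec}, which needs some $\zhat_0$ with $(\Au+B_uK_u)\Pu\zhat_0\ne0$. Since $\Au+B_uK_u$ is Hurwitz on $\Hu$ by \cref{A:Ku}, it is invertible. So any $\zhat_0$ with $\Pu\zhat_0\ne0$ works; such a $\zhat_0$ exists as soon as $N\ge1$, and the case $N=0$ is trivial.

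Finally, no regularity of the data is needed because every result invoked holds for data in $\Hcal\times\Hcal$. The main (mild) obstacle is bookkeeping rather than analysis. One must check that \cref{A:BSD} really follows from the commutation in \cref{A:split}, and that $\Hu\subset\Dcal(\Acal)$ with $\Acal|_{\Hu}=\Au$. The latter holds because a $C_0$-semigroup on a finite-dimensional invariant subspace is uniformly continuous.
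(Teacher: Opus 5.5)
Your proposal is correct and follows the paper's proof essentially step for step. You obtain the factorization $BK=DQ$ and \cref{A:BSD} from the commutation of $\Pu$ with $e^{t\Acal}$, and \cref{A:Kstab} from \cref{lem:split-stab}. You then read off (a)--(d) from \cref{thm:dw-bounded}, \cref{prop:examples}(E3), \cref{thm:stab}, \cref{cor:rate}, \cref{cor:dw-normalised} and \cref{prop:gamma0-nec}, the last via invertibility of the Hurwitz operator $\Au+B_uK_u$.

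One small correction: the case $N=0$ is not ``trivial'' for the negative claim in (d); it has to be excluded. Then $\Pu=0$ and $q\equiv0$, so no event ever occurs after $t_0$ and every $\tau_0>0$ is vacuously a dwell-time. That claim, like the paper's own argument, therefore implicitly requires $N\ge1$.
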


\begin{proof}
	Since $\Hu$ is invariant we have $\Pu e^{t\Acal}=e^{t\Acal}\Pu=e^{t\Au}\Pu$ so \cref{A:BSD} holds with $\Acal_Q=\Au$, $\norm{\Acal_Q}=\norm{\Au}$, $QD=\Pu BK_u=B_uK_u$, $\norm Q=\norm{\Pu}$ and $\norm D=\norm{BK_u}$.\\ 
	\cref{A:Kstab} holds by \cref{lem:split-stab}.\\
	For (a) and (b) we apply \cref{thm:dw-bounded}(i), \cref{prop:examples}(E3) and \cref{thm:stab}.\\
	For (c), \cref{thm:dw-bounded}(ii) gives the dwell-time, \cref{cor:rate} with $\gamma=0$ gives $C^*_\beta<\infty$, and \cref{thm:stab} with $\gamma=0$, for which $\kappa_0=0$ and \eqref{eq:smallgain} reduces to $\lambda\le\min\{\alpha_L,\mu\}$, $\lambda<\alpha_K$, gives the exponential stability.\\
	For (d) we apply \cref{cor:dw-normalised} and, for the last statement, \cref{prop:gamma0-nec} noticing that $\Au+B_uK_u$ is invertible since it is Hurwitz.
\end{proof}

\begin{remark}
	In \cref{cor:dw-spectral}, the stable component $\zhat_s=\Ps\zhat$ is not monitored. It is driven by the sampled unstable state through $B_sK_u\zhat_u(t_k)$ and by the output residual, and its decay is part of \cref{thm:stab}.
\end{remark}

\begin{proof}[Proof of \cref{thm:main-dw}]
	By \cref{prop:examples}(E3), the internal variable \eqref{eq:supfilter-intro} satisfies \eqref{H1}, \eqref{H2} and \eqref{H4} with $C_e=e^{\nu}\eta^{-1}$, for every initial datum. \Cref{thm:dw-bounded}(i) then gives the dwell-time $\tau_0$, which is \eqref{eq:tau-i} with $C_e=e^\nu\eta^{-1}$. If moreover \cref{A:Kstab} holds, \cref{prop:examples}(E3) shows that \eqref{H3} holds with the rate $\min\{\mu,\nu,\alpha_L\}\ge\lambda$, and \cref{thm:stab} gives the exponential stability.
\end{proof}

\subsection{An obstruction independent of the triggering rule}\label{sec:necessity}

The dwell-times of \cref{sec:dwell-bounded} rely on \cref{A:BSD}
. We now show that, for unbounded generators, these restrictions cannot be removed in general, whatever the triggering rule. More precisely, for some systems satisfying all standing assumptions, no triggering rule can achieve both a dwell-time independent of the initial datum and the boundedness of all trajectories. To make the statement as strong as possible, we allow arbitrary rules.

\begin{definition}[Triggering scheme]\label{def:scheme}
	A triggering scheme associates with every initial datum $(\zhat_0,e_0)\in\Hcal\times\Hcal$ a strictly increasing event sequence $0=t_0<t_1<\cdots$ (finite or infinite) and the corresponding function $\zhat$ satisfying \eqref{S1} of \cref{def:solution} on every inter-event interval.\\ 
	No assumption is made on how the events are decided. They may depend on the whole past, on auxiliary variables, on the index $k$, or on anything else.\\ 
	A scheme admits a dwell-time independent of the initial datum if there exists $\tau^*>0$ with $t_{k+1}-t_k\ge\tau^*$ for every datum and every admissible $k$ with $t_{k+1}<\infty$.\\ 
	It has bounded trajectories if $\sup_{t\ge0}\normH{\zhat(t)}<\infty$ for every datum, and is stabilizing if $\normH{\zhat(t)}\to0$ as $t\to\infty$ for every datum.
\end{definition}

\begin{example}[An unstable diagonal system]\label{ex:counter}
	Let $\Hcal=\Ucal=\Ycal=\ell^2(\N^*;\Cbb)$ with canonical basis $(\phi_n)_{n\ge1}$, $\Acal\phi_n=(1+in)\phi_n$ with
	\begin{equation*}
		\Dcal(\Acal)=\{x \in \Hcal,\quad \sum n^2|x_n|^2<\infty\}, \qquad B=C=I, \qquad K=L=-2I.
	\end{equation*} 
	Then \cref{A:gen,A:BK,A:Kstab,A:CL,A:Lstab} hold with sharp constants. More precisely, $\norm{e^{t\Acal}}=e^t$, and $\AK=\AL=\Acal-2I$ is diagonal with eigenvalues $-1+in$, so that
	\begin{equation*}
		\norm{T_K(t)}=\norm{T_L(t)}=e^{-t} \quad \text{i.e. }M_K=M_L=1,\quad  \alpha_K=\alpha_L=1.
	\end{equation*}
	Every mode is unstable for the free dynamics.
\end{example}

\begin{lemma}[Reduction to a scalar equation]\label{lem:reduction}
	In \cref{ex:counter}, let $n\ge1$, $\zhat_0=z_0=\phi_n$ (so $e_0=0$). For every triggering scheme, $\zhat(t)=a(t)\phi_n$ for $t\ge0$, where $a(0)=1$ and, for every admissible $k$ and $h\ge0$ with $t_k+h\le t_{k+1}$, it holds
	\begin{equation}\label{eq:scalar}
		a(t_k+h)=\Big[\frac{-1+in}{1+in}e^{(1+in)h}+\frac2{1+in}\Big]a(t_k),\qquad
		|a(t_k+h)|\ge\Big(e^h-\frac2{\sqrt{1+n^2}}\Big)|a(t_k)| .
	\end{equation}
\end{lemma}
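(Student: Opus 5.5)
The plan is to exploit the fact that, in \cref{ex:counter}, every operator involved is diagonal in the basis $(\phi_n)$, so the line $\spanop\{\phi_n\}$ is invariant under the frozen dynamics. First, since $e_0=\zhat_0-z_0=0$, \cref{lem:error} gives $e\equiv0$, and the output-residual term $LCe$ disappears from \eqref{S1}. On an inter-event interval $[t_k,t_{k+1})$, the frozen continuation \eqref{eq:frozen} then reads
\[
W_{t_k,v}(t_k+h)=e^{h\Acal}v+\int_0^he^{(h-r)\Acal}BKv\,dr,\qquad BK=-2I .
\]
If $v=a\phi_n$, then $e^{s\Acal}\phi_n=e^{(1+in)s}\phi_n$. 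Hence $W_{t_k,v}(t_k+h)$ is again a multiple of $\phi_n$. An induction on $k$ (starting from $\zhat(0)=\phi_n$ and using continuity of $\zhat$ at each finite $t_{k+1}$) will therefore show that $\zhat(t)=a(t)\phi_n$ for all $t\in[0,T^*)$, whatever scheme produced the events. Only \eqref{S1} is used here, which is all that \cref{def:scheme} guarantees.

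Next, I will compute the scalar coefficient. With $\lambda_n\coloneqq1+in$,
\[
\int_0^he^{(h-r)\lambda_n}\,dr=\frac{e^{\lambda_nh}-1}{\lambda_n},
\]
so that
\[
a(t_k+h)=\Big[e^{\lambda_nh}-\frac{2(e^{\lambda_nh}-1)}{\lambda_n}\Big]a(t_k)=\Big[\frac{\lambda_n-2}{\lambda_n}e^{\lambda_nh}+\frac2{\lambda_n}\Big]a(t_k).
\]
Since $\lambda_n-2=-1+in$, this is exactly the identity in \eqref{eq:scalar}. This step is pure algebra.

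For the lower bound, I will use the reverse triangle inequality together with two modulus computations. First, $|(-1+in)/(1+in)|=1$. Second, $|e^{\lambda_nh}|=e^h$. Finally, $|2/\lambda_n|=2/\sqrt{1+n^2}$. Combining these gives
\[
|a(t_k+h)|\ge\Big(e^h-\frac2{\sqrt{1+n^2}}\Big)|a(t_k)| .
\]

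There is no real obstacle in this argument. The only point requiring care is the bookkeeping in the induction: one must justify that the value at $t_{k+1}$ is the left limit of the frozen continuation. This follows from the continuity of $\zhat$ required in \cref{def:solution}, which applies through \eqref{S1} in \cref{def:scheme}.
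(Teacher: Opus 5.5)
Your proposal is correct and follows essentially the same route as the paper: $e\equiv0$, invariance of $\spanop\{\phi_n\}$ under the frozen dynamics propagated by induction on $k$ through \eqref{S1}, explicit evaluation of the Duhamel integral, and the reverse triangle inequality using $|e^{(1+in)h}(-1+in)/(1+in)|=e^h$ and $|2/(1+in)|=2/\sqrt{1+n^2}$. Your remark that covering the endpoint $h=t_{k+1}-t_k$ requires $\zhat(t_{k+1})$ to be the left limit of the frozen continuation is a point the paper leaves implicit.
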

\begin{proof}
	Since $e_0=0$, $e\equiv0$. The operators $B,K,C,L$ are multiples of the identity, so they leave $\spanop\{\phi_n\}$ invariant, as does $e^{t\Acal}$. 
	By \eqref{S1} and induction on $k$, $\zhat(t)\in\spanop\{\phi_n\}$ for all $t$, whatever the event sequence. Writing $\zhat=a\phi_n$, \eqref{S1} reads
	\begin{equation*}
		a(t_k+h)=e^{(1+in)h}a(t_k)-2a(t_k)\int_0^he^{(1+in)(h-r)}\,dr=a(t_k)\left[e^{(1+in)h}-\frac{2(e^{(1+in)h}-1)}{1+in}\right],
	\end{equation*}
	which is the identity in \eqref{eq:scalar} since $1-2\left(1+in\right)^{-1}=\left(-1+in\right)\left(1+in\right)^{-1}$.\\ 
	The modulus of the first term in the bracket is $e^h$, that of the second is $2(\sqrt{1+n^2})^{-1}$, and the reverse triangle inequality gives the bound in \eqref{eq:scalar}.
\end{proof}

\begin{theorem}[Obstruction]\label{thm:necessity}
	For the system of \cref{ex:counter}, no triggering scheme admits a dwell-time independent of the initial datum and has bounded trajectories.\\ 
	In particular, no stabilizing triggering scheme admits a dwell-time independent of the initial datum.
\end{theorem}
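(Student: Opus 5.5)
I argue by contradiction: suppose a triggering scheme admits a dwell-time $\tau^*>0$ independent of the initial datum and has bounded trajectories. The idea is to use the data $(\zhat_0,e_0)=(\phi_n,0)$ of \cref{lem:reduction}, with $n$ chosen large in terms of $\tau^*$. For these data the scalar bound \eqref{eq:scalar} shows that on every inter-event interval the modulus $|a|$ is multiplied by at least $e^h-2(1+n^2)^{-1/2}$, where $h$ is the elapsed time. The dwell-time forces every full inter-event interval to have length at least $\tau^*$, so each interval multiplies $|a|$ by a fixed factor larger than $1$. This makes $|a(t)|$, hence $\normH{\zhat(t)}$, unbounded, which contradicts bounded trajectories.

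\textbf{Step 1 (choice of $n$).} Fix $n$ with $\rho\coloneqq e^{\tau^*}-2(1+n^2)^{-1/2}>1$; this is possible because $e^{\tau^*}>1$ and $2(1+n^2)^{-1/2}\to0$. Set also $\delta\coloneqq 1-2(1+n^2)^{-1/2}>0$, which is positive as soon as $n\ge2$ (for $n\ge2$, $2/\sqrt5<1$). Since $e^h\ge1$, \eqref{eq:scalar} gives $|a(t_k+h)|\ge\delta|a(t_k)|$ for every $h\in[0,t_{k+1}-t_k]$. Taking $h=t_{k+1}-t_k\ge\tau^*$ whenever $t_{k+1}<\infty$, and using the continuity of $\zhat$ at $t_{k+1}$ (so that \eqref{eq:scalar} holds at the endpoint), gives $|a(t_{k+1})|\ge\rho|a(t_k)|$.

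\textbf{Step 2 (case analysis on the event sequence).} If the sequence is infinite, induction gives $|a(t_k)|\ge\rho^k$ with $t_k\to\infty$, so $\sup_t\normH{\zhat(t)}=\infty$. If it is finite with last event $t_N$, then on $[t_N,\infty)$ \eqref{eq:scalar} gives $|a(t_N+h)|\ge(e^h-2(1+n^2)^{-1/2})|a(t_N)|\to\infty$ as $h\to\infty$. Here $|a(t_N)|\ge\rho^N>0$, so in particular $a(t_N)\neq0$. In both cases the trajectory is unbounded, which is a contradiction. The ``in particular'' part follows at once, since a stabilizing scheme has bounded trajectories (a continuous function tending to $0$ is bounded on $\R_+$).

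The main point to handle carefully is that \cref{def:scheme} allows arbitrary rules, which might trigger on an adversarial sequence. The argument above is insensitive to this: the only facts used are that $\zhat$ stays in $\spanop\{\phi_n\}$ (\cref{lem:reduction}) and the per-interval lower bound, and both hold for any event sequence obeying \eqref{S1}. One more detail needs checking: $t_{k(t)}$ is admissible only for finitely many intervals up to any time. This is automatic, because the dwell-time gives $t_k\ge k\tau^*$, so the induction in Step 2 covers all of $[0,\infty)$.
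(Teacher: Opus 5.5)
Your proof is correct and follows essentially the same route as the paper's: you choose $n$ with $e^{\tau^*}-2(1+n^2)^{-1/2}>1$, use the per-interval lower bound of \cref{lem:reduction} (monotone in $h$) to get geometric growth of $|a(t_k)|$ when the event sequence is infinite, and use growth on the last, unbounded interval when it is finite. The auxiliary constant $\delta$ in your Step 1 is never used and can be dropped.
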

\begin{proof}
	Assume that a scheme admits a dwell-time $\tau^*>0$ independent of the data.\\ Choose $n$ such that
	\begin{equation*}
		q_n\coloneqq e^{\tau^*}-\frac{2}{\sqrt{1+n^2}}>1,
	\end{equation*}
	and apply the scheme to $\zhat_0=z_0=\phi_n$.\\ 
	We claim that $a(t_k)\ne0$ and $|a(t_{k+1})|\ge q_n|a(t_k)|$ for every admissible $k$ with $t_{k+1}<\infty$.\\ 
	Indeed $a(t_0)=1$, and if $a(t_k)\ne0$ and $t_{k+1}<\infty$, then $t_{k+1}-t_k\ge\tau^*$ and, since $h\mapsto e^h-2(\sqrt{1+n^2})^{-1}$ is increasing, \eqref{eq:scalar} gives
	\begin{equation*}
		|a(t_{k+1})|\ge q_n|a(t_k)|>0.
	\end{equation*}
	If the event sequence is infinite, then $|a(t_k)|\ge q_n^k\to\infty$.\\ 
	If it is finite with last event $t_K$, then \eqref{eq:scalar} holds for every $h\ge0$ at $k=K$, with $a(t_K)\ne0$, and
	\begin{equation*}
		|a(t_K+h)|\ge\left(e^h-\frac{2}{\sqrt{1+n^2}}\right)|a(t_K)|\to\infty.
	\end{equation*} 
	In both cases $\sup_t\normH{\zhat(t)}=\infty$.
\end{proof}

\begin{proof}[Proof of \cref{thm:main-dw}]
	(ii) The system of \cref{ex:counter} has an unbounded generator and satisfies \cref{A:gen,A:BK,A:Kstab,A:CL,A:Lstab}, and \cref{thm:necessity} applies to it.
\end{proof}

\begin{remark}[Without observer]\label{rem:no-observer}
	Assume that $e_0=0$, i.e. the observer is initialized at the true state, so that $\zhat=z$, or equivalently that the observer is not needed. This is the setting of \cite{BaudouinErvedoza2025}, with $K=K'C$ and $Q=C$ (see \cref{sec:intro}). Then $LCe\equiv0$, so that \eqref{H4} holds with $C_e=0$ for every internal variable, and \cref{A:Lstab} plays no role. In particular, the implementability issues discussed in \cref{rem:implement,rem:gamma0} disappear.
	\begin{enumerate}[label=\textup{(\alph*)},leftmargin=*]
		\item If $\gamma>0$ and \cref{A:BSD} holds, \cref{thm:dw-bounded}(i) gives the dwell-time
		\[
		\tau_0=\min\Big\{1,\ \frac{e^{-\norm{\Acal_Q}}}{(\norm{\Acal_Q}+\norm{QD})(1+\gamma^{-1})}\Big\},
		\]
		which is independent of the initial datum and of the internal variable. In particular, the simple choice $m(t)=m_0e^{-\mu t}$ (E1) gives a dwell-time independent of the data together with exponential stability (\cref{thm:stab}), without resorting to the output-residual filter (E3).
		\item If $\gamma=0$ and $z_0$ is known, as is implicitly the case for rules involving $\normH{z(t)}$ such as the one of \cite{BaudouinErvedoza2025}, the normalized internal variable $m(t)=m_0e^{-\beta t}\normH{z_0}$ is implementable, and \cref{cor:dw-normalised} gives a dwell-time independent of the data.
	\end{enumerate}
	On the other hand, \cref{A:BSD} cannot be removed in general. The obstruction of \cref{thm:necessity} is obtained for data with $e_0=0$, so that it also holds without observer: for an unbounded generator, a dwell-time independent of the data may be incompatible with bounded trajectories even when the full state is available.\end{remark}

\section{Examples of application}\label{sec:examples}

\paragraph{A finite-dimensional unstable system}

Let $\Hcal=\R^3$, $\Ucal=\Ycal=\R$ and
\[
\Acal=\begin{pmatrix}0&1&0\\0&0&1\\2&1&-2\end{pmatrix},\qquad B=\begin{pmatrix}1&0&0\end{pmatrix}^\top,\qquad C=\begin{pmatrix}0&0&1\end{pmatrix}.
\]
The characteristic polynomial of $\Acal$ is $\lambda^3+2\lambda^2-\lambda-2=(\lambda-1)(\lambda+1)(\lambda+2)$, so the free dynamics has the unstable eigenvalue $1$. \\
The controllability matrix $(B,\Acal B,\Acal^2B)$ has columns $(1,0,0)^\top$, $(0,0,2)^\top$, $(0,2,-4)^\top$ and determinant $-4$.\\ 
The observability matrix $(C, C\Acal, C\Acal^2)$ has rows $(0,0,1)$, $(2,1,-2)$, $(-4,0,5)$ and determinant $4$.\\ 
Hence $K$ and $L$ can be chosen (for instance by pole placement) so that \cref{A:Kstab,A:Lstab} hold.\\ 
By \cref{cor:dw-finite}, the rule with threshold $\gamma\norm{\zhat}+m$, $m$ the supremum output residual filter \eqref{eq:supfilter}, and $\gamma$ small enough admits a dwell-time independent of the initial datum and is exponentially stable.

\paragraph{Parabolic equations}

Consider \begin{equation*}
	\partial_tz=\varkappa\Delta z+\varsigma z+\chi_\omega u,
\end{equation*} 
on a smooth bounded domain $\Omega$ with Dirichlet boundary conditions, $\varkappa>0$, $\varsigma\ge0$, with output $y=\chi_{\omega_o}z$, where $\omega,\omega_o\subset\Omega$ are nonempty open sets.\\ 
Let $\Hcal=\Ucal=\Ycal=L^2(\Omega)$ and define
\[
\Acal=\varkappa\Delta+\varsigma I,\qquad
D(\Acal)=H^2(\Omega)\cap H_0^1(\Omega),\qquad
B=\chi_\omega,\qquad C=\chi_{\omega_o},
\]
so that the system reads $\dot z=\Acal z+Bu$, $y=Cz$.\\
The generator is self-adjoint with compact resolvent, so that only finitely many eigenvalues lie above any given level, and \cref{A:split} holds with the orthogonal projection $\Pu$ onto the eigenspaces associated with eigenvalues $\ge-\lambda_c$, for any $\lambda_c>0$.\\ 
Since eigenfunctions of the Dirichlet Laplacian cannot vanish on a nonempty open set (unique continuation), the restrictions to $\omega$ (respectively $\omega_o$) of a basis of each eigenspace are linearly independent, and the Hautus test shows that $(\Au,\Pu\chi_\omega)$ is controllable and $(\Au,\chi_{\omega_o}|_{\Hu})$ is observable. Hence \cref{A:Ku} holds for a suitable $K_u$.\\ 
Choosing $L=\iota_uL_u$ with $\iota_u$ the injection of $\Hu$ and $L_u$ such that $\Au+L_u\chi_{\omega_o}|_{\Hu}$ is Hurwitz, $\AL$ is block-triangular in $\Hu\oplus\Hs$ (since $\Ps L=0$) with exponentially stable diagonal blocks, so that \cref{A:Lstab} holds by the argument of \cref{lem:split-stab}. \Cref{cor:dw-spectral} then provides a dwell-time independent of the initial datum together with exponential stability, by monitoring only the finitely many unstable modes of the observer.

\paragraph{Wave equations}

Consider 
\begin{equation*}
	\partial_{tt}z=c^2\Delta z-\delta\partial_tz+\chi_\omega u,
\end{equation*} 
on a smooth bounded domain with Dirichlet boundary conditions, $c>0$, $\delta\ge0$.\\
Let $\Hcal=H^1_0(\Omega)\times L^2(\Omega)$ endowed with the energy norm
\[
\norm{(z,v)}_{\Hcal}^2=\norm{\nabla z}_{L^2(\Omega)}^2+\norm{v}_{L^2(\Omega)}^2,
\]
and write the equation as a first-order system for $(z,\partial_tz)$, i.e.
\[
\partial_t\begin{pmatrix}z\\ \partial_tz\end{pmatrix}
=\Acal\begin{pmatrix}z\\ \partial_tz\end{pmatrix}
+Bu,
\]
where
\[
\Acal=\begin{pmatrix}0&I\\ c^2\Delta&-\delta I\end{pmatrix},
\qquad
D(\Acal)=\bigl(H^2(\Omega)\cap H^1_0(\Omega)\bigr)\times H^1_0(\Omega).
\]
Let $\Ucal=L^2(\Omega)$, and $ B=\begin{pmatrix}0 & \chi_\omega\end{pmatrix}^\top$.\\
If the output is the distributed displacement, take $\Ycal=L^2(\Omega)$ and $C=\begin{pmatrix}\chi_{\omega_o}&0\end{pmatrix}$,
so that it holds $y=C(z,\partial_tz)=\chi_{\omega_o}z$.
If the output is the velocity, we replace $C$ by $(0\ \ \chi_{\omega_o})$.\\
The generator $\Acal$ is unbounded, so the derivative $\dot z=\Acal z$ is not bounded uniformly over bounded subsets of $\Hcal$. Then, by \cref{lem:nonunif} the first term of the increment identity has no uniform modulus of continuity over bounded sets. \Cref{thm:main-wp,thm:stab} apply as soon as $K$ and $L$ satisfy \cref{A:Kstab,A:Lstab}.\\

\input{Numerics}

\section{Conclusion
}\label{sec:conclusion}

We have developed a framework for observer-based event-triggered control of abstract linear systems on Hilbert spaces, under the sole assumptions that the dynamics operator $\Acal$ generates a $C_0$-semigroup, that the control and observation operators are bounded, and that the operators $\Acal+BK$ and $\Acal+LC$ generate exponentially stable semigroups.\\ 
No structural condition on $\Acal$, such as skew-adjointness, dissipativity or analyticity, is imposed, and the initial data are not restricted to the domain of the generator. Our approach then accommodates a considerably broader class of physical systems, including transport phenomena, non-conservative damping, and other genuinely non-normal or non-self-adjoint dynamics.\\ The triggering rule only uses the observer and the measured output, and compares the deviation of a monitored signal with a threshold made of a relative part $\gamma\normQ{q}$ and of a positive internal variable $m$.

Well-posedness and the absence of Zeno behavior hold for every $\gamma\ge0$ and every continuous positive internal variable, and exponential stability holds under a small-gain condition on $\gamma$ and an exponential decay of $m$. These results rely on the mild formulation throughout. The positivity of $m$ separates the events, the strong limit of the observer at a finite accumulation time excludes Zeno behavior, and the sampling error is absorbed at the level of the variation of constants formula, so that no differentiability of the trajectory, and no dissipativity of the generator, is invoked.

The dwell-time question has a sharply different answer. All dwell-time estimates rest on the increment identity, whose only delicate term is $(e^{h\Acal}-I)\zhat(t_k)$. When the monitored signal has bounded dynamics, as in finite dimension or when a finite-dimensional unstable part is monitored alone, a dwell-time independent of the data and exponential stability hold simultaneously, with an implementable internal variable driven by the measured output residual.\\ 
When the generator is unbounded and the whole state is monitored, dwell-times are available along each trajectory and uniformly over regularity classes, and we have exhibited a system satisfying all our assumptions for which no triggering rule can achieve both a dwell-time independent of the data and bounded trajectories. The root cause is that a $C_0$-semigroup with unbounded generator is not continuous at the origin in operator norm.

\section*{Acknowledgments}
The authors are grateful to Amaury Hayat for insightful discussions at the early stages of this work, as well as for his careful reading of a preliminary version of the manuscript and his many helpful suggestions.

\section*{Disclosure on the use of AI tools}
During the preparation of this manuscript, the authors used ChatGPT (OpenAI) to assist with the drafting and editing of the text and to help check the mathematical arguments. All results, proofs, and conclusions were subsequently reviewed and verified by the authors, who take full responsibility for the content of the paper.

\end{document}

%% file: Numerics.tex
\section{Numerical experiments}\label{sec:numerics}
 
 We illustrate the main results on four examples
 \begin{itemize}[leftmargin=*]
 	\item the finite-dimensional unstable system of \cref{sec:examples} (\cref{sec:num-fd}),
 	\item a reaction--diffusion equation with a finite-dimensional unstable part (\cref{sec:num-parab}),
 	\item a coupled wave--heat system (\cref{sec:num-wh}),
 	\item and the counterexample of \cref{ex:counter} (\cref{sec:num-counter}).
 \end{itemize}
 In each case, we compare the event-triggered implementation of the observer-based feedback with its continuous-time version $u=K\zhat(t)$. We use the threshold \eqref{eq:threshold} with the internal variables (E1) and (E3) of \cref{prop:examples}, as well as the threshold reduced to the internal variable ($\gamma=0$). Unless stated otherwise, the parameters of the internal variables are
 \[
 m_0=0.05,\qquad \mu=0.5,\qquad \eta=0.1,\qquad \nu=1,
 \]
 and the observer is started at $\zhat_0=0$, so that $e_0=-z_0$. 
 
 \paragraph{Numerical method.} 
 In each case, the discrete system has the form \eqref{eq:system} with matrices $\Acal_h,B_h,C_h,K_h,L_h$, and the constants of \cref{A:Kstab,A:Lstab} are estimated by
 \begin{equation*}
 	\alpha_K=0.9\,|s(\Acal_h+B_hK_h)|,
 \end{equation*}
 where $s(\cdot)$ denotes the spectral abscissa, and
 \begin{equation*}
 	M_K=\max_{t\le20}\norm{e^{t(\Acal_h+B_hK_h)}}e^{\alpha_Kt} \quad \text{(similarly for }L). 
 \end{equation*} 
 We denote by
 \begin{equation*}
 	\gamma_{\mathrm{th}} = 0.9\,(\alpha_K-\lambda)(M_K\norm{D}\norm{Q})^{-1}\quad \text{with } \lambda=0.3,
 \end{equation*}
 which satisfies the small-gain condition \eqref{eq:smallgain}.\\
 
 The PDE are discretized in space, by a modal Galerkin method for the reaction--diffusion equation and by an energy-consistent finite-difference scheme for the wave--heat system.\\ 
 Between two events, the closed loop is linear and time-invariant. We propagate the augmented state $(\zhat,e,w)$, where $w=\zhat(t_k)$ is the sampled observer state, with the exact matrix exponential of
 \[
 \mathbb M \coloneqq \begin{pmatrix}\Acal_h&L_hC_h&B_hK_h\\0&\Acal_h+L_hC_h&0\\0&0&0\end{pmatrix}.
 \]
 Hence the computed trajectory of the discretized system is exact in time. No time-stepping error is introduced between events.\\
 The triggering condition is checked on a time grid of step $10^{-3}$, and the event times are then located by bisection, up to $1.5\cdot10^{-8}$. 
 
 \subsection{A finite-dimensional unstable system}\label{sec:num-fd}
 
 We consider the system of \cref{sec:examples}, with $K$ and $L$ placing the spectra of $\Acal+BK$ and $\Acal+LC$ at $\{-1,-2,-3\}$ and $\{-3,-4,-5\}$, and the initial state $z_0=(1,0.5,-1)$. In this case $\gamma_{\mathrm{th}}=0.015$.
 
 \begin{figure}[ht]
 	\centering
 	\includegraphics[width=\textwidth]{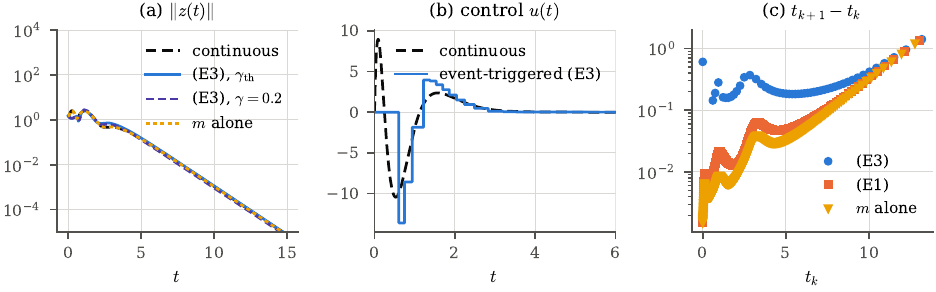}
 	\caption{Finite-dimensional system. (a) $\normH{z(t)}$ for the continuous feedback and several thresholds; (b) control, continuous and event-triggered with (E3); (c) inter-event times $t_{k+1}-t_k$ against $t_k$.}\label{fig:num-fd}
 \end{figure}
 
 \paragraph{Stability and number of events.} All event-triggered loops recover the decay rate of the continuous feedback (\cref{fig:num-fd}(a)), as predicted by \cref{thm:stab}. This also holds for $\gamma=0.2$, which violates the small-gain condition, so that this condition is sufficient but not necessary.\\ 
 The internal variables differ at the beginning of the transient, when the output residual $LCe$ is large. With (E1) or with $m$ alone, the threshold is small compared with the drift of the observer caused by the output residual, and the first inter-event times are of order $10^{-3}$.\\ 
 With (E3), the internal variable dominates the output residual, and only $49$ events occur on $[0,15]$, against $345$ with (E1) and $563$ with $m$ alone (\cref{fig:num-fd}(c)).

 \paragraph{Why a positive internal variable.} With $m\equiv0$, which violates \eqref{H2}, the threshold $0.2\normH{\zhat}$ vanishes at $t=0$, since $\zhat_0=0$, while the observer is driven by the output residual. Near $t=0$, $\zhat(t)\approx t\,LCe_0$, so that the event that follows $t_k$ occurs when $t-t_k=0.2\,t$, that is, $t_{k+1}\approx1.25\,t_k$. The events therefore accumulate at $t=0$, and the first computed event is located at the precision $\delta$ of the event location itself (\cref{fig:num-zero}). The number of events on $[0,0.5]$ grows like $\log(0.5/\delta)/\log1.25$. It increases from $53$ to $90$ when $\delta$ is refined from $4\cdot10^{-6}$ to $10^{-9}$.\\ This is the lack of event separation discussed in \cref{sec:intro-rule}. With $m$ alone, on the contrary, the events near $t=0$ are dense but separated, and they do not depend on $\delta$.
 
 \begin{figure}[ht]
 	\centering
 	\includegraphics[width=0.55\textwidth]{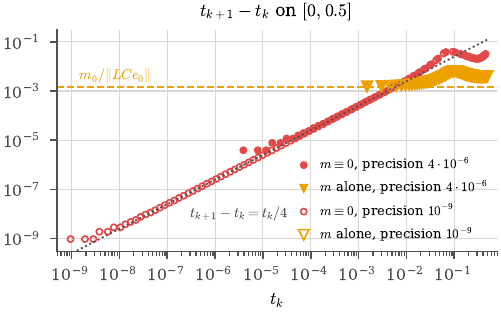}
 	\caption{Finite-dimensional system, events near $t=0$ ($\zhat_0=0$): inter-event times against $t_k$ on $[0,0.5]$, for two precisions $\delta$ of the event location. With $m\equiv0$ (threshold $0.2\normH{\zhat}$), the events accumulate at $t=0$, and the first one is at $t=\delta$. With $m$ alone, the first inter-event time is $m_0/\normH{LCe_0}$, and both precisions give the same events.}\label{fig:num-zero}
 \end{figure}
 
 \begin{figure}[ht]
 	\centering
 	\includegraphics[width=0.8\textwidth]{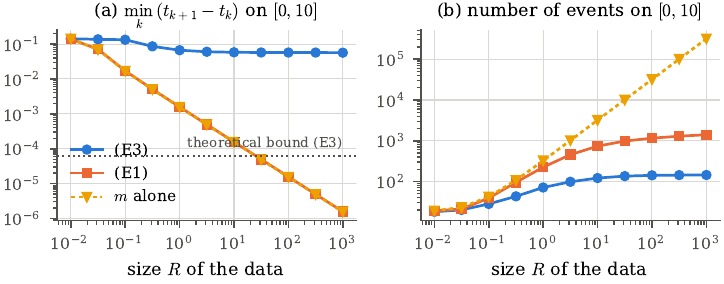}
 	\caption{Finite-dimensional system: minimal inter-event time and number of events on $[0,10]$ for initial data of size $R$ ($z_0=Rd$, $\zhat_0=0$, minimum, resp.\ maximum, over $8$ random unit directions $d$). The dotted line is the explicit dwell-time of \cref{thm:dw-bounded} for (E3).}\label{fig:num-fd-dwell}
 \end{figure}
 
 \paragraph{Dwell-time.} \Cref{fig:num-fd-dwell} shows the minimal inter-event time as a function of the size $R$ of the initial data. With (E3), it stays between $0.057$ and $0.14$ for $R$ between $10^{-2}$ and $10^3$, and the number of events saturates. This is the dwell-time independent of the initial datum of \cref{thm:main-dw} (see also \cref{cor:dw-finite}). The explicit bound is valid but conservative. With (E1) and with $m$ alone, the minimal inter-event time decreases like $R^{-1}$, so that the dwell-time depends on the data.
 
 \subsection{A reaction--diffusion equation with a finite-dimensional unstable part}\label{sec:num-parab}
 
 Let $\Hcal=L^2(0,\pi)$ and consider
 \[
 \partial_tz=\partial_{xx}z+5z+\chi_{(0.3,1.2)}u,\qquad y=\int_{1.8}^{2.6}z\,dx,\qquad z(t,0)=z(t,\pi)=0,
 \]
 discretized with $40$ Dirichlet modes. The eigenvalues $5-n^2$ give two unstable modes. The gains $K$ and $L$ only act on these two modes, and we compare the monitoring of the unstable part only, $Q=\Pu$ (\cref{cor:dw-spectral}), with the monitoring of the whole state, $Q=I$. The initial state is $z_0(x)=3x(\pi-x)/\pi^{2}+0.5\sin(5x)$.
 
 \begin{figure}[ht]
 	\centering
 	\includegraphics[width=\textwidth]{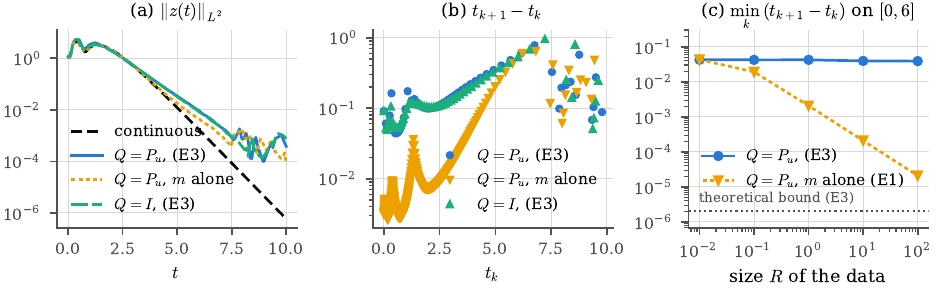}
 	\caption{Unstable reaction--diffusion equation. (a) $\norm{z(t)}_{L^2}$; (b) inter-event times; (c) minimal inter-event time on $[0,6]$ for initial data of size $R$ (minimum over $4$ random directions).}\label{fig:num-rd}
 \end{figure}
 
 All event-triggered loops are exponentially stable (\cref{fig:num-rd}(a)). Their decay rate is smaller than that of the continuous feedback, because it is limited by the decay of the internal variable, in accordance with the condition $\lambda\le\mu$ of \cref{thm:stab}.\\ 
 With $Q=\Pu$ and (E3), the minimal inter-event time lies between $0.039$ and $0.043$ for $R$ between $10^{-2}$ and $10^2$, whereas it decreases like $R^{-1}$ when the threshold reduces to $m$ (\cref{fig:num-rd}(c)). Monitoring only the unstable modes thus gives a dwell-time independent of the data, although the generator is unbounded, as stated in \cref{thm:main-dw}. When the whole state is monitored, data concentrated on a high mode $\varphi_n$ produce a first event after a time of order $n^{-2}$, whereas with $Q=\Pu$ they produce no event at all.
 
 \subsection{A coupled wave--heat system}\label{sec:num-wh}
 
 We consider the transmission problem between a wave equation on $(-1,0)$ and a heat equation on $(0,1)$ \cite{ZhangZuazua2004},
 \begin{equation*}
 	\begin{cases}
 		\partial_{tt}w=\partial_{xx}w+\chi_{(-1,0)}u, & x\in(-1,0),\\
 		\partial_t\theta=\partial_{xx}\theta, & x\in(0,1),\\
 		w(t,-1)=0,\quad\theta(t,1)=0,\quad\partial_tw(t,0)=\theta(t,0),\quad\partial_xw(t,0)=\partial_x\theta(t,0),
 	\end{cases}
 \end{equation*}
 in the energy space $\Hcal=\{w\in H^1(-1,0),\ w(-1)=0\}\times L^2(-1,0)\times L^2(0,1)$.\\ 
 Without control, the energy decays only polynomially \cite{ZhangZuazua2004,BattyPaunonenSeifert2016}.\\ 
 We use the velocity feedback $u=-2\,\partial_tw$ on $(-1,0)$, the measurement $y=\partial_tw|_{(-1,0)}$ and the collocated injection $L=-2C^*$, for which \cref{A:Kstab,A:Lstab} hold uniformly with respect to the discretization. Here $\Acal$ is unbounded and the whole state is monitored. The initial state has unit energy, with $\partial_xw_0=\frac\pi2\cos(\frac\pi2(x+1))$, $\partial_tw_0=0$ and $\theta_0=\frac12\sin(\pi x)$.
 
 \begin{figure}[ht]
 	\centering
 	\includegraphics[width=\textwidth]{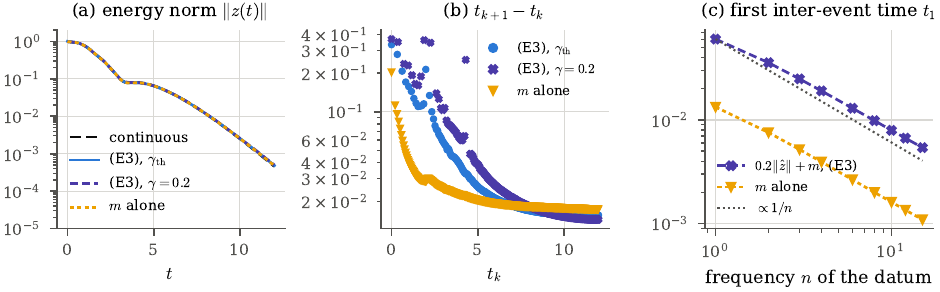}
 	\caption{Wave--heat system. (a) Energy of the plant; (b) inter-event times; (c) first inter-event time for the data $\partial_tw_0=\sin(n\pi(x+1))$, $\partial_xw_0=0$, $\theta_0=0$ (unit energy, $e_0=0$).}\label{fig:num-wh}
 \end{figure}
 
 The event-triggered loops reproduce the decay of the continuous feedback, with inter-event times above $0.014$ (\cref{fig:num-wh}(a)--(b)). For the high-frequency data $\partial_tw_0=\sin(n\pi(x+1))$, however, the first inter-event time decreases like $n^{-1}$ (\cref{fig:num-wh}(c)), from $0.061$ for $n=1$ to $5.4\cdot10^{-3}$ for $n=15$ with the threshold $0.2\normH{\zhat}+m$. Hence there is no dwell-time over bounded sets of data, which reflects the non-uniform continuity of the semigroup (\cref{lem:nonunif}).
 
 \subsection{The obstruction of \texorpdfstring{\cref{thm:necessity}}{the obstruction theorem}}\label{sec:num-counter}
 
 We finally consider the system of \cref{ex:counter} with data $\zhat_0=z_0=\varphi_n$, for which the closed loop reduces, for every triggering rule, to the scalar recursion of \cref{lem:reduction}. We compare the threshold $0.25\normH{\zhat}+0.05e^{-0.5t}$, for which \cref{thm:stab} applies, with periodic sampling of period $\tau=0.2$, which is the simplest scheme with a dwell-time independent of the data.
 
 \begin{figure}[ht]
 	\centering
 	\includegraphics[width=\textwidth]{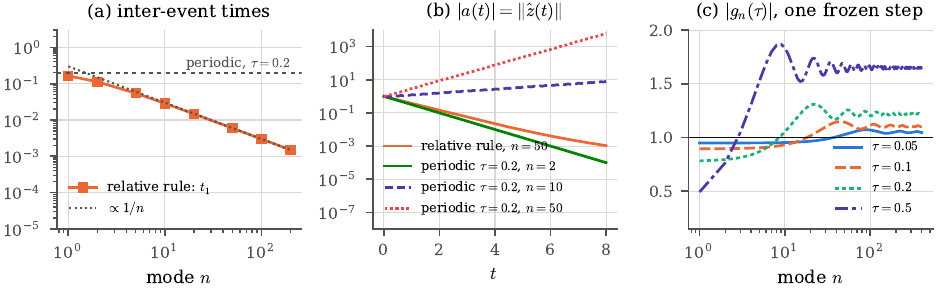}
 	\caption{Counterexample of \cref{ex:counter}, data $\varphi_n$. (a) First inter-event time of the event-triggered rule, compared with the period $\tau=0.2$; (b) $\normH{\zhat(t)}$ for the event-triggered rule ($n=50$) and for periodic sampling with $\tau=0.2$ ($n=2,10,50$); (c) growth factor $|g_n(\tau)|$ of \eqref{eq:scalar} over one interval of length $\tau$.}\label{fig:num-counter}
 \end{figure}
 
 \Cref{fig:num-counter} displays the dichotomy of \cref{thm:necessity}. The event-triggered loop is stable for every $n$, but its first inter-event time decreases like $n^{-1}$, from $0.16$ for $n=1$ to $1.5\cdot10^{-3}$ for $n=200$. Conversely, periodic sampling has a dwell-time independent of the data, but it is unstable for every mode $n\ge10$, since the factor $|g_n(0.2)|$ by which the mode is multiplied at each sampling instant is then larger than $1$. \Cref{fig:num-counter}(c) shows that this is not specific to the value $\tau=0.2$. Indeed, for every $\tau$, holding the control during a time $\tau$ amplifies all sufficiently high modes.
 
 \medskip
 
 In summary, the experiments illustrate the three regimes identified by the theory. The event-triggered closed loop is well defined, free of Zeno behavior and exponentially stable in all cases. When the monitored signal has bounded dynamics, as in finite dimension or when only finitely many unstable modes are monitored, the internal variable (E3) gives a dwell-time independent of the data. For an unbounded generator with the whole state monitored, the inter-event times depend on the frequency content of the data, and the counterexample shows that, for some systems, this cannot be avoided by any triggering rule.